\renewcommand{\eqref}[1]{\hyperref[#1]{(\ref{#1})}}
\newlist{enumlist}{enumerate}{1}
\setlist[enumlist]{labelindent=0cm,label=\arabic*.,ref=\arabic*,labelwidth=2.5ex,labelsep=0.5ex,leftmargin=3ex,align=left,topsep=0.5ex,itemsep=1ex,parsep=1ex}
\newlist{itemlist}{itemize}{1}
\setlist[itemlist]{labelindent=0cm,label=$\bullet$,labelwidth=2.5ex,labelsep=0.5ex,leftmargin=3ex,align=left,topsep=0.5ex,itemsep=1ex,parsep=1ex}
\numberwithin{equation}{section}
\theoremstyle{definition}\newtheorem{definition}{Definition}[section]
\newtheorem*{definition*}{Definition}
\newtheorem{remark}[definition]{Remark}
\newtheorem{example}[definition]{Example}
\newtheorem*{example*}{Example}
\newtheorem*{examples*}{Examples}}
\newtheorem{proposition}[definition]{Proposition}
\newtheorem{lemma}[definition]{Lemma}
\newtheorem{theorem}[definition]{Theorem}
\newtheorem{corollary}[definition]{Corollary}
\theoremstyle{definition}}
\newcommand{\SL}{\operatorname{SL}}
\newcommand{\Sd}{\operatorname{Sd}}
\newcommand{\C}{\mathbb{C}}
\newcommand{\eps}{\varepsilon}
\newcommand{\al}{\alpha}
\newcommand{\be}{\beta}
\newcommand{\ot}{\otimes}
\newcommand{\recht}{\rightarrow}
\newcommand{\Z}{\mathbb{Z}}
\newcommand{\vphi}{\varphi}
\newcommand{\id}{\mathord{\text{\rm id}}}
\newcommand{\om}{\omega}
\newcommand{\ovt}{\mathbin{\overline{\otimes}}}
\newcommand{\real}{\operatorname{Re}}
\newcommand{\si}{\sigma}
\newcommand{\R}{\mathbb{R}}
\newcommand{\F}{\mathbb{F}}
\newcommand{\cH}{\mathcal{H}}
\newcommand{\cK}{\mathcal{K}}
\newcommand{\cF}{\mathcal{F}}
\newcommand{\actson}{\curvearrowright}
\newcommand{\cB}{\mathcal{B}}
\newcommand{\cU}{\mathcal{U}}
\newcommand{\cR}{\mathcal{R}}
\newcommand{\dis}{\displaystyle}
\newcommand{\cV}{\mathcal{V}}
\newcommand{\cS}{\mathcal{S}}
\begin{document}

\begin{center}
{\boldmath\Large\bf Bernoulli actions of type III$_1$ and $L^2$-cohomology}

\bigskip

{\sc by Stefaan Vaes$^1$ and Jonas Wahl\footnote{\noindent KU~Leuven, Department of Mathematics, Leuven (Belgium).\\ E-mails: stefaan.vaes@kuleuven.be and jonas.wahl@kuleuven.be. SV and JW are supported by European Research Council Consolidator Grant 614195 RIGIDITY, and by long term structural funding~-- Methusalem grant of the Flemish Government.}}

\end{center}

\begin{abstract}\noindent
We conjecture that a countable group $G$ admits a nonsingular Bernoulli action of type III$_1$ if and only if the first $L^2$-cohomology of $G$ is nonzero. We prove this conjecture for all groups that admit at least one element of infinite order. We also give numerous explicit examples of type III$_1$ Bernoulli actions of the groups $\Z$ and the free groups $\F_n$, with different degrees of ergodicity.
\end{abstract}

\section{Introduction}

Among the most well studied probability measure preserving actions of a countable group $G$ are the \emph{Bernoulli actions} on product spaces $(X_0^G,\mu_0^G)$ given by $(g \cdot x)_h = x_{g^{-1} h}$. Replacing $\mu_0^G$ by an arbitrary product probability measure $\mu = \prod_{g \in G} \mu_g$, using Kakutani's criterion on the equivalence of product measures \cite{Ka48}, it is easy to see when the resulting action on $(X,\mu)$ is \emph{nonsingular}, i.e.\ the action preserves the measure class of $\mu$. However, it turned out to be difficult, even when $G = \Z$, to give criteria when $G \actson (X,\mu)$ is ergodic and to determine its type. Only quite recently, in \cite{Ko09,Ko10,Ko12,DL16}, the first examples of nonsingular type III$_1$ Bernoulli actions of $G = \Z$ were constructed, using an inductive procedure to define $\mu_n$, $n \in \Z$.

We make a systematic study of nonsingular Bernoulli actions $G \actson (X,\mu) = \prod_{g \in G} (X_0,\mu_g)$ of arbitrary countable groups. We conjecture that $G$ admits an ergodic nonsingular Bernoulli action of type III$_1$ if and only if the first $L^2$-cohomology $H^1(G,\ell^2(G))$ is nonzero. It is indeed quite straightforward to see that if $H^1(G,\ell^2(G))=\{0\}$, then $\mu$ is equivalent to a $G$-invariant probability measure of the form $\mu_0^G$, see Theorem \ref{thm.all-II1}. The converse implication turns out to be much more involved. While every non-inner $1$-cocycle $c : G \recht \ell^2(G)$ gives rise to a nonsingular Bernoulli action $G \actson (X,\mu)$, the ergodicity and type of this Bernoulli action depend in a very subtle way on the behavior of the $1$-cocycle $c$.

The $L^2$-cohomology $H^1(G,\ell^2(G))$ can be nonzero for two reasons: when $G$ has positive first $L^2$-Betti number $\beta_1^{(2)}(G) > 0$ and when $G$ is an infinite amenable group. We therefore prove the conjecture in the following two separate cases:
\begin{enumlist}
\item when $G$ has positive first $L^2$-Betti number $\beta_1^{(2)}(G)> 0$ and $G$ contains an infinite subgroup $\Lambda < G$ such that $\beta_1^{(2)}(\Lambda) <  \beta_1^{(2)}(G)$, see Theorem \ref{thm.positive-betti-III1};
\item when $G$ is an amenable group, see Theorem \ref{thm.amenable-III1}.
\end{enumlist}
Since $\beta_1^{(2)}(\Z) = 0$, these two statements imply that our conjecture holds when $G$ contains an element of infinite order.

A crucial ingredient to prove the first statement above is a non-inner $1$-cocycle $c : G \recht \ell^2(G)$ that vanishes on an infinite subgroup $\Lambda < G$. Such a $1$-cocycle does not exist when $G$ is amenable. More precisely, when $G$ is infinite and amenable, by \cite[Theorem 2.5]{PT10}, every non-inner $1$-cocycle $c : G \recht \ell^2(G)$ is proper and therefore does not vanish on any infinite subset. When $c : G \recht \ell^2(G)$ is a proper $1$-cocycle, the ergodicity and type of the associated nonsingular Bernoulli action depend subtly on the growth of the cocycle, i.e.\ the growth of the function $g \mapsto \|c_g\|_2$. The main issue is that if $\|c_g\|_2$ grows too fast, then $G \actson (X,\mu)$ is \emph{dissipative}. Recall here that a nonsingular action $G \actson (X,\mu)$ is called dissipative if there exists a Borel set $\cU \subset X$ such that all $g \cdot \cU$, $g \in G$, are disjoint and $\bigcup_{g \in G} g \cdot \cU = X$, up to measure zero. On the other hand, $G \actson (X,\mu)$ is called \emph{conservative} if for every non-null Borel set $\cU \subset X$, there exists a $g \in G \setminus \{e\}$ such that $g \cdot \cU \cap \cU$ is non-null. A Borel set $\cU \subset X$ is called \emph{wandering} if all $g \cdot \cU$, $g \in G$, are disjoint. A nonsingular action $G \actson (X,\mu)$ is conservative if and only if every wandering set has measure zero.
In Proposition \ref{prop.conservative}, we provide a quite sharp, quantitative conservative/dissipative criterion for nonsingular Bernoulli actions in terms of the growth of the associated $1$-cocycle, thus answering \cite[Question 10.5]{DL16}.

We then prove that an amenable group $G$ admits $1$-cocycles $c : G \recht \ell^2(G)$ of arbitrarily slow growth, see Proposition \ref{prop.cocycle-small-growth}. This result is analogous to \cite[Proposition 3.10]{CTV05}, where it is shown that a group with the Haagerup admits proper $1$-cocycles of arbitrarily slow growth into \emph{some} unitary representation. By combining Proposition \ref{prop.cocycle-small-growth} with the conservativeness criterion in Proposition \ref{prop.conservative}, we construct ergodic type III$_1$ Bernoulli actions for all infinite amenable groups $G$, thus proving the second statement above.

For each of the groups $G$ in the two statements above, we actually construct nonsingular Bernoulli actions $G \actson (X,\mu)$ that are \emph{weakly mixing} and of \emph{stable type III$_1$} in the sense of \cite[Section 1.3]{BN11}, meaning that for every ergodic probability measure preserving (pmp) action $G \actson (Y,\eta)$, the diagonal action $G \actson (Y \times X,\eta \times \mu)$ remains ergodic and of type III$_1$.

As a consequence of our methods, we also give explicit examples of type III$_1$ Bernoulli actions of $\Z$ in Corollaries \ref{cor.explicit-Z} and \ref{cor.explicit-Z-power-dissipative}, complementing the less explicit inductive constructions in \cite{Ko09,Ko10,Ko12,DL16}. For some of these examples of Bernoulli shifts $T$, all powers $T \times \cdots \times T$ remain ergodic and of type III$_1$ (as in the examples in \cite{Ko10}), but others admit a power that is dissipative~--~and such examples were not available so far.

In the final Section \ref{sec.bernoulli-free-group}, we give several concrete examples of nonsingular Bernoulli actions $G \actson (X,\mu)$ of the free groups $G = \F_n$.
\begin{itemlist}
\item In Example \ref{ex.free-groups}, we construct nonsingular Bernoulli actions of $\F_n$, $n \geq 2$, that are of type III$_\lambda$ for arbitrary $\lambda \in (0,1)$. It is unknown whether such actions exist for $G = \Z$. So far, all available constructions of type III Bernoulli actions of $\Z$, including ours, rely on the assumption that all $\mu_n$, $n < 0$, are identical. Under that hypothesis, it is proven in \cite[Corollary 3.3]{DL16} that all nonsingular Bernoulli actions are of type I (the dissipative case), type II$_1$ or type III$_1$, but never of type III$_\lambda$.

\item In Proposition \ref{prop.free-group-prescribed-invariants}, we give examples of nonsingular Bernoulli actions of $\F_n$, $n \geq 3$, that are \emph{strongly ergodic}. Moreover, the Connes invariants of the associated orbit equivalence relation (see \cite{Co74,HMV17}) can take any prescribed value: in Example \ref{ex.free-groups}, we provide almost periodic examples whose $\Sd$-invariant is any countable dense subgroup of $\R_*^+$ and we provide non almost periodic examples for which the $\tau$-invariant is an arbitrary topology on $\R$ induced by a unitary representation of $\R$. This answers \cite[Problem 3]{HMV17}.

\item In Proposition \ref{prop.stable-type-concrete-examples} and Example \ref{ex.all-values-a-b}, we construct nonsingular, weakly mixing Bernoulli actions $\F_n \actson (X,\mu)$, $n \geq 2$, with a variety of stable types. This includes examples of stable type III$_\lambda$, i.e.\ such that for every ergodic pmp action $\F_n \actson (Y,\eta)$, the diagonal action $\F_n \actson (Y \times X,\eta \times \mu)$ is of type III$_\lambda$, but it also includes examples where the type of these diagonal actions ranges over III$_\mu$ with $\mu \in \{1\} \cup \{\lambda^{1/k} \mid k \geq 1\}$, for any fixed $0 < \lambda < 1$.

\item In Proposition \ref{prop.concrete-free-group-1}, we give examples of type III$_1$ nonsingular Bernoulli actions $G \actson (X,\mu)$ of $G = \F_2$ associated with a proper $1$-cocycle $c : G \recht \ell^2(G)$ such that the $m$-fold diagonal action $G \actson (X^m,\mu^m)$ is dissipative for $m$ large enough. Finally, we give examples of dissipative Bernoulli actions $\F_2 \actson (X,\mu)$ of the free group $\F_2$ in Proposition \ref{prop.free-group-dissipative}.
\end{itemlist}

We conclude our discussion of stable types by providing a positive answer to \cite[Question 4.6]{BN11} in Proposition \ref{prop.BN-question}, where we prove that for every countable infinite group $G$ and every $\lambda \in (0,1]$, there exists an essentially free nonsingular action $G \actson (X,\mu)$ that is amenable, weakly mixing and of stable type III$_\lambda$.

{\bf Acknowledgment.} We thank Zemer Kosloff for his suggestion to also consider the stable type of our nonsingular Bernoulli actions, in connection with \cite{BN11}. SV thanks the Isaac Newton Institute for Mathematical Sciences for support and hospitality during the programme {\it Operator Algebras: Subfactors and their Applications} when work on this paper was undertaken, supported by EPSRC Grant Number EP/K032208/1.

\section{Preliminaries and notations}

Let $G$ be a countable infinite group. Given $0 < \mu_g(0) < 1$ for all $g \in G$, we consider the product probability space
$$(X,\mu) = \prod_{g \in G} (\{0,1\},\mu_g)$$
and the Bernoulli action $G \actson X$ given by $(g \cdot x)_k = x_{g^{-1} k}$ for all $g, k \in G$, $x \in X$. By Kakutani's theorem \cite{Ka48} on the equivalence of product measures, we get that the action $G \actson (X,\mu)$ is nonsingular if and only if for every $g \in G$, we have that
\begin{equation}\label{eq.nonsingular}
\sum_{k \in G} \Bigl( \sqrt{\mu_{gk}(0)} - \sqrt{\mu_k(0)}\Bigr)^2 + \sum_{k \in G} \Bigl( \sqrt{\mu_{gk}(1)} - \sqrt{\mu_k(1)}\Bigr)^2 < \infty \; .
\end{equation}
Note that $\mu$ is nonatomic if and only if
\begin{equation}\label{eq.nonatomic}
\sum_{k \in G} \min\{\mu_k(0),\mu_k(1)\} = +\infty
\end{equation}
and we always make this assumption. Also note that if there exists a $\delta > 0$ such that $\delta \leq \mu_k(0) \leq 1-\delta$ for all $k \in G$, then the nonsingularity condition \eqref{eq.nonsingular} is equivalent with the condition
\begin{equation}\label{eq.easier-nonsingular}
\sum_{k \in G} (\mu_{gk}(0) - \mu_k(0))^2 < \infty
\end{equation}
for every $g \in G$, see \cite{Ka48}.

When proving that certain nonsingular Bernoulli actions $G \actson (X,\mu)$ are of type III$_1$, it is often useful to restrict the action of $G$ to a subgroup $\Lambda < G$. We therefore fix the following general framework: a countable infinite group $\Lambda$ acting freely on a countable set $I$. Given any function $F : I \recht (0,1)$, we define the product probability space $(X,\mu) = \prod_{i \in I} (\{0,1\},\mu_i)$ where $\mu_i(0) = F(i)$ and we consider the Bernoulli action $\Lambda \actson X$ given by $(g \cdot x)_i = x_{g^{-1} \cdot i}$. We always make the following two assumptions:
\begin{equation}\label{eq.conds-F}
\begin{split}
& \text{there exists a $\delta > 0$ such that}\;\; \delta \leq F(i) \leq 1-\delta \;\; \text{for all $i \in I$,}\\
& \text{for every $g \in G$, we have that}\;\; \sum_{i \in I} (F(g \cdot i) - F(i))^2 < \infty \; .
\end{split}
\end{equation}
Then, the action $\Lambda \actson (X,\mu)$ is nonsingular and essentially free. The Radon-Nikodym cocycle $\om : \Lambda \times X \recht (0,\infty)$ is defined by
\begin{equation}\label{eq.general-RN}
\int_X F(x) \, \om(g,x) \, d\mu(x) = \int_X F(g^{-1} \cdot x) \, d\mu(x)
\end{equation}
for all positive Borel functions $F : X \recht [0,+\infty)$ and all $g \in \Lambda$. Given any enumeration $I = \{i_1,i_2,\ldots\}$, we have that
\begin{equation}\label{eq.RN-conv-ae}
\om(g,x) = \lim_n \prod_{k=1}^n \frac{\mu_{g \cdot i_k}(x_{i_k})}{\mu_{i_k}(x_{i_k})} \quad\text{for a.e.\ $x \in X$.}
\end{equation}
The \emph{Maharam extension} of $\Lambda \actson (X,\mu)$ is the (infinite) measure preserving action
\begin{equation}\label{eq.maharam}
\Lambda \actson (X \times \R, \mu \times \nu) : g \cdot (x,s) = (g \cdot x, \log(\om(g,x)) + s) \;\; , \;\; d\nu(s) = \exp(-s) \, ds \; .
\end{equation}
The Maharam extension $\Lambda \actson X \times \R$ commutes with the translation action $\R \actson X \times \R$ given by $t \cdot (x,s) = (x,s+t)$. Identifying the algebra of $\Lambda$-invariant elements $L^\infty(X \times \R)^\Lambda$ with $L^\infty(Z,\rho)$ for some standard probability space $(Z,\rho)$, we thus find a nonsingular action $\R \actson (Z,\rho)$.
Assuming that $\Lambda \actson (X,\mu)$ is nonsingular, essentially free and ergodic, its type can be determined as follows in terms of $\R \actson (Z,\rho)$~: if the action $\R \actson Z$ is measurably conjugate with the translation action $\R \actson \R$, we get type I or II (the semifinite case); if the action is conjugate with $\R \actson \R / \log(\lambda) \Z$ for $0 < \lambda < 1$, we get type III$_\lambda$; if the action is the trivial action on one point (i.e.\ the Maharam extension is ergodic), we get type III$_1$; and finally, if the action is properly ergodic, we get type III$_0$.

Note that by \eqref{eq.conds-F}, we can associate with $F : I \recht (0,1)$ the $1$-cocycle
\begin{equation}\label{eq.one-cocycle-c}
c : \Lambda \recht \ell^2(I) : c_g(i) = F(i) - F(g^{-1} \cdot i) \; .
\end{equation}

Recall that a nonsingular action $G \actson (X,\mu)$ is called \emph{weakly mixing} if for every ergodic probability measure preserving (pmp) action $G \actson (Y,\eta)$, the diagonal action $G \actson (Y \times X,\eta \times \mu)$ is ergodic. Following \cite[Section 1.3]{BN11}, an essentially free, nonsingular action $G \actson (X,\mu)$ is said to be of \emph{stable type III$_1$} if for every pmp action $G \actson (Y,\eta)$, the diagonal action $G \actson (Y \times X,\eta \times \mu)$ is of type III$_1$. So $G \actson (X,\mu)$ is of stable type III$_1$ if and only if for every pmp action $G \actson (Y,\eta)$ and using the Maharam extension, we have $L^\infty(Y \times X \times \R)^G = L^\infty(Y \times X)^G \ot 1$. In particular, $G \actson (X,\mu)$ is weakly mixing and of stable type III$_1$ if and only if the Maharam extension $G \actson X \times \R$ is weakly mixing.

Let $G$ be a countable group. The \emph{amenability} of an essentially free nonsingular action $G \actson (X,\mu)$ was defined in \cite[Definition 1.4]{Zi76a} through a fixed point property. When $\mu$ is an invariant probability measure, this notion is equivalent with the amenability of $G$. In general, this notion is equivalent with the injectivity of the crossed product von Neumann algebra $L^\infty(X) \rtimes G$ by \cite{Zi76b} and \cite[Theorem 2.1]{Zi76c}. Denote by $\lambda : G \recht \cU(\ell^2(G))$ the left regular representation. By \cite[Theorem 3.1.6]{AD01}, the amenability of $G \actson (X,\mu)$ is equivalent with the existence of a sequence of Borel maps $\xi_n : X \recht \ell^2(G)$ with the following properties: for all $n$ and a.e.\ $x \in X$, we have that $\|\xi_n(x)\|_2 = 1$; and for all $g \in G$ and $P \in L^1(X,\mu)$, we have that
$$\lim_n \int_X \langle \lambda_g \xi_n(g^{-1} \cdot x),\xi_n(x) \rangle \, P(x) \, d\mu(x) = \int_X P(x) \, d\mu(x) \; .$$

\section{\boldmath Groups with trivial first $L^2$-cohomology}

The following theorem says that for groups with vanishing first $L^2$-cohomology, a nonsingular Bernoulli action is either probability measure preserving (pmp) or dissipative, and thus, never of type III.

\begin{theorem}\label{thm.all-II1}
Let $G$ be a countable infinite group with $H^1(G,\ell^2(G)) = \{0\}$. Assume that $(\mu_g)_{g \in G}$ is a family of probability measures on a standard Borel space $X_0$. If the Bernoulli action $G \actson (X,\mu) = \prod_{g \in G} (X_0,\mu_g)$ is nonsingular, then there exists a partition $X_0 = Y_0 \sqcup Z_0$ into Borel sets such that, writing $Y = Y_0^G \subset X$, we have
\begin{enumlist}
\item $\mu(Y) > 0$ and $\mu|_Y \sim \nu^G$ for some probability measure $\nu$ on $Y_0$, so that $G \actson (Y,\mu)$ is an ergodic pmp Bernoulli action;
\item $\sum_{g \in G} \mu_g(Z_0) < \infty$, so that the action $G \actson (X \setminus Y,\mu)$ is dissipative.
\end{enumlist}
\end{theorem}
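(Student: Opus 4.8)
The plan is to encode nonsingularity as a $1$-cocycle valued in a multiple of the left regular representation, invoke $H^1(G,\ell^2(G))=\{0\}$ to see that this cocycle is a coboundary, and then read off the partition from the resulting limit density. I first unpack nonsingularity. Writing $g_*\mu=\bigotimes_{k\in G}\mu_{g^{-1}k}$ for the push-forward, the condition $\mu\sim g_*\mu$ is, by Kakutani's theorem, equivalent to $\mu_k\sim\mu_{g^{-1}k}$ for every $k$ together with a square-summability of Hellinger distances. Since $G$ acts transitively on itself, the first part forces all the $\mu_g$ to lie in a single measure class; I fix a probability measure $m$ in this class (say $m=\mu_e$) and put $f_g=(d\mu_g/dm)^{1/2}\in L^2(X_0,m)$, a unit vector with $f_g>0$ $m$-a.e. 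The Kakutani bound then reads $\sum_{k\in G}\|f_k-f_{h^{-1}k}\|_{L^2(m)}^2<\infty$ for every $h\in G$.

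Next I form $\cH=\ell^2(G)\ot L^2(X_0,m)$ with the representation $\pi=\lambda_G\ot 1$ and define $c\colon G\recht\cH$ by $c_h(k)=f_k-f_{h^{-1}k}$. The displayed bound says each $c_h\in\cH$, and a direct computation gives the cocycle identity $c_{gh}=c_g+\pi_g c_h$. Now $H^1(G,\ell^2(G))=\{0\}$ together with $G$ infinite forces, by the dichotomy recalled in the introduction, both $\beta_1^{(2)}(G)=0$ and non-amenability of $G$. These two facts upgrade the vanishing to the multiple $\cH$: the reduced cohomology $\overline{H}^1(G,\cH)$ vanishes because its $L(G)$-dimension is controlled by $\beta_1^{(2)}(G)=0$, while non-amenability (so $\lambda_G$, hence $\pi$, has no almost invariant vectors) forces the coboundaries to be closed; hence $H^1(G,\cH)=\{0\}$ and $c$ is a genuine coboundary. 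Thus there is $\xi=(\xi_k)_k\in\cH$, i.e.\ $\sum_k\|\xi_k\|^2<\infty$, with $c_h(k)=\xi_k-\xi_{h^{-1}k}$. Comparing with $c_h(k)=f_k-f_{h^{-1}k}$ yields $f_k-\xi_k=f_{h^{-1}k}-\xi_{h^{-1}k}$ for all $h,k$, so by transitivity $\phi:=f_k-\xi_k$ is independent of $k$. Hence $f_k=\phi+\xi_k$ with $\sum_k\|f_k-\phi\|^2<\infty$; since each $f_k\ge0$ with $\|f_k\|=1$, the limit $\phi$ satisfies $\phi\ge0$ and $\|\phi\|_{L^2(m)}=1$, so $\nu:=\phi^2\,dm$ is a probability measure on $X_0$ with $\phi=(d\nu/dm)^{1/2}$.

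I then set $Y_0=\{\phi>0\}$ and $Z_0=\{\phi=0\}$, so that $\nu$ is supported on $Y_0$ and $\nu\sim m|_{Y_0}$. For item (2): as $\phi=0$ on $Z_0$, one has $\mu_g(Z_0)=\int_{Z_0}f_g^2\,dm=\int_{Z_0}(f_g-\phi)^2\,dm\le\|f_g-\phi\|^2$, so $\sum_{g}\mu_g(Z_0)<\infty$. Consequently the configuration $S(x)=\{g\in G:x_g\in Z_0\}$ is a.e.\ finite, it satisfies $S(h\cdot x)=hS(x)$, and $Y=Y_0^G=\{x:S(x)=\emptyset\}$; since the Bernoulli action is essentially free, an equivariant Borel selection of one element of the finite set $S(x)$ produces a wandering set whose $G$-translates cover $X\setminus Y$, so $G\actson(X\setminus Y,\mu)$ is dissipative. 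For item (1): $\sum_g\mu_g(Z_0)<\infty$ gives $\mu(Y)=\prod_g\mu_g(Y_0)>0$. Because each $f_g>0$ $m$-a.e., the normalized restrictions $\tilde\mu_g=\mu_g(Y_0)^{-1}\mu_g|_{Y_0}$ are probability measures on $Y_0$ with $\tilde\mu_g\sim m|_{Y_0}\sim\nu$, and writing $\tilde f_g=\mu_g(Y_0)^{-1/2}f_g\mathbf 1_{Y_0}$ one checks $\sum_g\|\tilde f_g-\phi\|^2\lesssim\sum_g\|f_g-\phi\|^2+\sum_g\mu_g(Z_0)^2<\infty$. Kakutani's theorem now gives $\bigotimes_g\tilde\mu_g\sim\nu^G$, and since $\prod_g\mu_g(Y_0)>0$ we conclude $\mu|_Y=\bigotimes_g\mu_g|_{Y_0}\sim\nu^G$, with $G\actson(Y,\nu^G)$ the (ergodic) pmp Bernoulli action with base $(Y_0,\nu)$.

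The main obstacle is the cohomological step: vanishing of $H^1(G,\ell^2(G))$ for a single copy does not by itself imply vanishing for the multiple $\cH$ — indeed, for free groups $\lambda_G$ has a spectral gap yet $H^1(G,\ell^2(G))\neq\{0\}$ — so one genuinely needs to combine $\beta_1^{(2)}(G)=0$ (to kill the reduced cohomology of the multiple) with non-amenability (to get closed range), both extracted from the hypothesis. A second, more elementary, subtlety is that throughout one needs true equivalence of the infinite product measures rather than one-sided absolute continuity; this is exactly why the factor-wise equivalence $\mu_g\sim m$, forced by nonsingularity at the very first step, is indispensable when applying Kakutani's theorem on $Y_0^G$.
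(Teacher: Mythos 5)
Your proof is correct and follows essentially the same route as the paper: the same Kakutani cocycle $c_h(k)=f_k-f_{h^{-1}k}$ into $\ell^2(G)\ot L^2(X_0,m)$, the same use of nonamenability (no almost invariant vectors, hence closed coboundaries) combined with the vanishing of $H^1(G,\ell^2(G))$ to conclude that $c$ is inner~--~the paper implements your reduced-cohomology step concretely via finite-rank projections $1 \ot P_n$, which is also the cleanest way to make your ``$L(G)$-dimension'' justification rigorous~--~and the same partition $Y_0=\{\phi>0\}$, $Z_0=\{\phi=0\}$ with the identical estimate $\mu_g(Z_0)\leq\|f_g-\phi\|_2^2$. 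The only cosmetic differences are your explicit normalization argument for $\mu|_Y\sim\nu^G$ (the paper invokes Kakutani directly) and your wandering-set construction for dissipativity (the paper instead uses $\sum_{k}\mu(k\cdot Z)<\infty$ for $Z=\{x \mid x_e\in Z_0\}$ together with $X\setminus Y=\bigcup_k k\cdot Z$).
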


Note that there are large classes of groups for which $H^1(G,\ell^2(G)) = \{0\}$, so that all their ergodic nonsingular Bernoulli actions must be of type II$_1$ or type I. This holds in particular for all infinite groups with property~(T), for all nonamenable groups that admit an infinite amenable normal subgroup, and for all direct product groups $G = G_1 \times G_2$ with $G_1$ infinite and $G_2$ nonamenable.

\begin{proof}
Since $G \actson (X,\mu)$ is nonsingular, all measures $\mu_g$ are in the same measure class. We fix a probability measure $\mu_0$ on $X_0$ such that $\mu_g \sim \mu_0$ for all $g \in G$. Define the unit vectors $\xi_g \in L^2(X_0,\mu_0)$ given by $\xi_g = \sqrt{d\mu_g / d\mu_0}$. By Kakutani's \cite{Ka48}, we get that $\sum_{k \in G} \|\xi_{gk} - \xi_k\|_2^2 < \infty$ for all $g \in G$. So, the map
$$c : G \recht \ell^2(G) \ot L^2(X_0,\mu_0) : c_g = \sum_{k \in G} \delta_k \ot (\xi_k - \xi_{g^{-1} k})$$
is a well defined $1$-cocycle.

Write $\cK = L^2(X_0,\mu_0)$. Since $H^1(G,\ell^2(G)) = \{0\}$ and $G$ is infinite, the group $G$ is non\-amenable. Since $G$ is non\-amenable and the representation of $G$ on $\ell^2(G) \ot \cK$ is a multiple of the regular representation, this representation has no almost invariant unit vectors. It follows that the inner $1$-cocycles form a closed subspace of the space of $1$-cocycles $Z^1(G,\ell^2(G) \ot \cK)$ equipped with the topology of pointwise convergence. Fix a sequence of finite rank projections $P_n$ on $\cK$ that converge to $1$ strongly. Since $H^1(G,\ell^2(G)) = \{0\}$, every $g \mapsto (1 \ot P_n) c_g$ is an inner $1$-cocycle. Since $\lim_n (1 \ot P_n)c_g = c_g$ for every $g \in G$, it then follows that also $c$ is inner. This means that there exists a $\xi_0 \in \cK$ such that
\begin{equation}\label{eq.inner-xi0}
\sum_{k \in G} \|\xi_k - \xi_0\|_2^2 < \infty \; .
\end{equation}
We get in particular that $\xi_k \recht \xi_0$ as $k \recht \infty$ in $G$. So, $\xi_0$ is positive a.e.\ and $\|\xi_0\|_2 = 1$. Denote by $\nu$ the unique probability measure on $X_0$ such that $\nu \prec \mu_0$ and $\xi_0 = \sqrt{d\nu / d\mu_0}$. Write $X_0 = Y_0 \sqcup Z_0$ such that $\nu(Z_0) = 0$ and $\nu \sim \mu_0|_{Y_0}$.

Since $\nu(Z_0) = 0$, we have
$$\|\xi_k - \xi_0\|_2^2 \geq \int_{Z_0} \xi_k(x)^2 \, d\mu_0(x) = \mu_k(Z_0) \; .$$
It follows that $\sum_{k \in G} \mu_k(Z_0) < \infty$. Writing $Y = Y_0^G \subset X$, we conclude that $\mu(Y) > 0$. Since $\mu_k|_{Y_0} \sim \nu$ for all $k \in G$, it follows from \eqref{eq.inner-xi0} and \cite{Ka48} that $\mu|_Y \sim \nu^G$.

Write $Z = \{x \in X \mid x_e \in Z_0\}$. It follows that
$$\sum_{k \in G} \mu(k \cdot Z) = \sum_{k \in G} \mu_k(Z_0) < \infty \; .$$
Since $X \setminus Y = \bigcup_{k \in G} k \cdot Z$, it follows that the action $G \actson (X \setminus Y,\mu)$ is dissipative.
\end{proof}

\section{A criterion for conservativeness}\label{sec.conservative}

Recall that a nonsingular essentially free action $\Lambda \actson (X,\mu)$ is called \emph{conservative} if there is no nonnegligible Borel set $A \subset X$ such that all $g \cdot A$, $g \in \Lambda$ are disjoint. Note that $\Lambda \actson (X,\mu)$ is conservative if and only if the orbit equivalence relation has no type I direct summand, which is in turn equivalent to the crossed product $L^\infty(X) \rtimes \Lambda$ having no type I direct summand. So, using e.g.\ \cite[Theorem XII.1.1]{Ta03}, a nonsingular essentially free action $\Lambda \actson (X,\mu)$ is conservative if and only if its Maharam extension given by \eqref{eq.maharam} is conservative.


The key ingredient to prove Theorems \ref{thm.positive-betti-III1} and \ref{thm.amenable-III1} is the following criterion to ensure that a Bernoulli action is conservative. The criterion says that it suffices that the $1$-cocycle $c$ given by \eqref{eq.one-cocycle-c} has logarithmic growth in at least one direction, thus providing an answer to \cite[Question 10.5]{DL16}. The second point of the proposition is easier and is a straightforward generalization of \cite[Lemma 2.2]{Ko12} to Bernoulli actions of arbitrary countable groups.

\begin{proposition}\label{prop.conservative}
Let $\Lambda \actson I$ be a free action of the countable group $\Lambda$ on the countable set $I$ and let $F : I \recht (0,1)$ be a function satisfying \eqref{eq.conds-F}, in particular $\delta \leq F(i) \leq 1-\delta$ for all $i \in I$. Denote by $\Lambda \actson (X,\mu)$ the associated Bernoulli action and by $c : \Lambda \recht \ell^2(I)$ the associated $1$-cocycle as in \eqref{eq.one-cocycle-c}.

\begin{enumlist}
\item If $\sum_{g \in \Lambda} \exp(-\kappa \|c_g\|_2^2) = +\infty$ for some $\kappa > \delta^{-2} + \delta^{-1}(1-\delta)^{-2}$, then the action $\Lambda \actson (X,\mu)$ is conservative.

\item If $\sum_{g \in \Lambda} \exp(- \frac{1}{2} \|c_g\|_2^2) < +\infty$, then the action $\Lambda \actson (X,\mu)$ is dissipative.
\end{enumlist}

In particular, if $1/3 \leq F(i) \leq 2/3$ for all $i \in I$ and if $\sum_{g \in \Lambda} \exp(-16 \|c_g\|_2^2) = +\infty$, then the action $\Lambda \actson (X,\mu)$ is conservative.
\end{proposition}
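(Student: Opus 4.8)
The plan is to prove the two parts separately, since they require genuinely different techniques, and the conservative half is the hard one. I will pass to the Maharam extension throughout, using the fact recalled just before the proposition that $\Lambda \actson (X,\mu)$ is conservative if and only if its Maharam extension is conservative, and similarly for dissipativity. The Radon--Nikodym cocycle is, by \eqref{eq.RN-conv-ae}, an infinite product $\om(g,x) = \prod_{i} \mu_{g\cdot i}(x_i)/\mu_i(x_i)$, so $\log\om(g,x)$ is a sum of independent random variables indexed by $i \in I$, with the summand at $i$ being nonzero only when $F(g\cdot i)\neq F(i)$; since $F$ takes values in $[\delta,1-\delta]$ these summands are uniformly bounded and their variances are controlled by $(F(g\cdot i)-F(i))^2$. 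The crucial point is that both the mean and the variance of $\log\om(g,\cdot)$ should be comparable to $\|c_g\|_2^2$, up to the constants appearing in the $\kappa$ threshold.

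\medskip

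\emph{Part (2), dissipativity.} This is the easier direction and I would do it first. To exhibit dissipativity it suffices to produce a wandering set of positive measure, or equivalently to show the Maharam extension is dissipative, for which a Borel--Cantelli style summability argument works. The standard route (generalizing \cite[Lemma 2.2]{Ko12}) is to estimate $\int_X \om(g,x)^{1/2}\,d\mu(x)$. Because the integrand factorizes over $i\in I$, this integral equals $\prod_i \int \sqrt{\mu_{g\cdot i}(x_i)\mu_i(x_i)}\,$, i.e.\ a product of Hellinger-type inner products $\sum_{t\in\{0,1\}}\sqrt{\mu_{g\cdot i}(t)\mu_i(t)}$. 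Each factor is $\le \exp(-\tfrac12 H_i)$ where $H_i$ is the squared Hellinger distance between $\mu_{g\cdot i}$ and $\mu_i$, and under the bound $\delta\le F\le 1-\delta$ one checks $H_i$ is comparable to $(F(g\cdot i)-F(i))^2$; the constant $\tfrac12$ in the hypothesis is exactly tuned so that $\int_X \om(g,x)^{1/2}\,d\mu(x) \le \exp(-\tfrac14\|c_g\|_2^2)$ or similar. Then $\sum_g \exp(-\tfrac12\|c_g\|_2^2)<\infty$ forces $\sum_g \int \om(g,x)^{1/2}\,d\mu < \infty$, and a Borel--Cantelli argument applied to the orbit sums of $\om^{1/2}$ yields that almost every point has a summable orbit, hence the action is dissipative.

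\medskip

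\emph{Part (1), conservativeness.} This is where I expect the real obstacle. The assumption is a \emph{divergence} condition $\sum_g \exp(-\kappa\|c_g\|_2^2)=+\infty$, and I want to conclude conservativeness, i.e.\ that almost every orbit has a non-summable Radon--Nikodym sum, $\sum_g \om(g,x)=+\infty$ a.e. The difficulty is that divergence of $\sum_g \exp(-\kappa\|c_g\|_2^2)$ is a statement about the \emph{expected} size of $\om(g,x)$, whereas conservativeness requires controlling the random sum $\sum_g \om(g,x)$ pointwise. The bridge is a second-moment / Paley--Zygmund argument: I would estimate $\int_X \min\{1,\om(g,x)\}\,d\mu$ or, more robustly, truncate and use that $\log\om(g,\cdot)$ is a sum of bounded independent variables to get Gaussian-type concentration around its mean $m_g$, with mean and variance both controlled by constants times $\|c_g\|_2^2$. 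The precise constant $\kappa > \delta^{-2} + \delta^{-1}(1-\delta)^{-2}$ must come out of optimizing such a concentration estimate: the $\delta^{-2}$ term should govern the drift $|m_g|$ and the $\delta^{-1}(1-\delta)^{-2}$ term the variance, so that $\Prob(\om(g,x)\ge 1)$, or the measure of a set on which $\om(g,x)$ is not too small, is bounded below by $\exp(-\kappa\|c_g\|_2^2)$. Summing these lower bounds and invoking divergence, one shows via a conditional Borel--Cantelli (second Borel--Cantelli) argument that infinitely many $g$ contribute, forcing $\sum_g \om(g,x)=\infty$ a.e.\ and hence conservativeness. The delicate part is making the concentration estimate uniform in $g$ with the stated constant while handling the dependence structure coming from overlapping supports $\{i : F(g\cdot i)\neq F(i)\}$.

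\medskip

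\emph{The final assertion} is then an immediate specialization: taking $\delta = 1/3$ gives $\delta^{-2}+\delta^{-1}(1-\delta)^{-2} = 9 + 3\cdot(3/2)^2 = 9 + 27/4 < 16$, so the hypothesis $\sum_g \exp(-16\|c_g\|_2^2)=+\infty$ with $1/3\le F(i)\le 2/3$ satisfies $\kappa=16 > \delta^{-2}+\delta^{-1}(1-\delta)^{-2}$, and part (1) applies directly.
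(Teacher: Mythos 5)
Your Part (2) and the final ``in particular'' claim are fine, and they follow the paper's own route: factorize $\int_X \sqrt{\om(g,x)}\,d\mu(x)$ over the coordinates and bound each factor by a Hellinger-type inequality, then integrate the sum to get $\sum_{g}\sqrt{\om(g,x)}<\infty$ a.e., hence $\sum_g \om(g,x)<\infty$ a.e.\ and dissipativity by Aaronson's criterion (no Maharam extension is needed anywhere in this proof). One caveat: your phrase ``comparable to $(F(g\cdot i)-F(i))^2$'' with a resulting bound $\exp(-\tfrac14\|c_g\|_2^2)$ does \emph{not} close the argument, because finiteness of $\sum_g \exp(-\tfrac12\|c_g\|_2^2)$ does not imply finiteness of $\sum_g \exp(-\tfrac14\|c_g\|_2^2)$. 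You need the bound $\int_X\sqrt{\om(g,x)}\,d\mu(x)\le \exp(-\tfrac12\|c_g\|_2^2)$ with the constant exactly $\tfrac12$, and this is available: $\sqrt{ab}+\sqrt{(1-a)(1-b)}\le 1-\tfrac12(a-b)^2$ holds for \emph{all} $a,b\in[0,1]$, so no comparability fudge (and no $\delta$) enters here.

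Part (1) has a genuine gap, and it sits exactly where you flagged it. Your plan is: lower-bound the probability of good events $\{x:\om(g,x)\ \text{not too small}\}$ by $\exp(-\kappa\|c_g\|_2^2)$, sum, and invoke a ``conditional / second Borel--Cantelli'' argument. But the second Borel--Cantelli lemma needs independence, or at least quantitative control of pairwise correlations (Kochen--Stone), and the events here are strongly dependent: for $g$ and $gh$ the supports of $c_g$ and $c_{gh}=c_g+g\cdot c_h$ overlap massively, and you offer no mechanism to control $\mu(A_g\cap A_{gh})$. Saying the dependence is ``the delicate part'' leaves the core of the proof open. The paper's proof avoids dependence entirely by reversing the logic: it never lower-bounds probabilities of good events, but instead upper-bounds probabilities of \emph{bad} events and uses only a union bound. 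Concretely: (i) the identity $\frac{a^3}{b^2}+\frac{(1-a)^3}{(1-b)^2}=1+\frac{a+2b-2ab-b^2}{b^2(1-b)^2}(a-b)^2$ yields the negative-moment estimate $\int_X \om(g,x)^{-2}\,d\mu(x)\le \exp(\kappa_0\|c_g\|_2^2)$ with $\kappa_0=\delta^{-2}+\delta^{-1}(1-\delta)^{-2}$ --- this, and not a drift-plus-variance decomposition, is where the threshold constant comes from; (ii) a counting argument on $\vphi(s)=\#\{g:\|c_g\|_2^2\le s\}$ converts the divergence hypothesis into a sequence $s_k\to\infty$ and finite sets $\cF_k\subset\Lambda$ with $\|c_g\|_2^2\le s_k$ on $\cF_k$ and $|\cF_k|\ge \exp(\kappa_2 s_k)-1$ for suitable $\kappa_0<\kappa_1<\kappa_2<\kappa$; (iii) Chebyshev applied to $\om(g,\cdot)^{-2}$ gives $\mu\{x:\om(g,x)\le \exp(-\kappa_1 s_k)\}\le \exp((\kappa_0-2\kappa_1)s_k)$, so the union $\cV_k$ of these bad sets over $g\in\cF_k$ has measure at most $\exp(-\eps s_k)\to 0$, while for $x\notin\cV_k$ one already has $\sum_{g\in\cF_k}\om(g,x)\ge |\cF_k|\exp(-\kappa_1 s_k)\ge \exp((\kappa_2-\kappa_1)s_k)-1\to\infty$. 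Hence $\{x:\sum_g\om(g,x)<\infty\}$ is contained in $\cV_k$ for all large $k$ and is null, which is conservativeness. To repair your outline, replace the concentration-plus-Borel--Cantelli step by this Chebyshev-plus-union-bound scheme; as written, Part (1) does not go through.
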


\begin{proof}
Denote by $\om : \Lambda \times X \recht (0,+\infty)$ the Radon-Nikodym cocycle given by \eqref{eq.general-RN}. By \cite[Proposition 1.3.1]{Aa97}, whose proof remains valid for arbitrary groups $\Lambda$, an essentially free nonsingular action $\Lambda \actson (X,\mu)$ is conservative if and only $\sum_{g \in \Lambda} \om(g,x) = +\infty$ for a.e.\ $x \in X$, while it is dissipative if and only if $\sum_{g \in \Lambda} \om(g,x) < +\infty$ for a.e.\ $x \in X$.

Write $\kappa_0 = \delta^{-2} + \delta^{-1}(1-\delta)^{-2}$. We start by proving that
\begin{equation}\label{eq.estim-om-negative-power}
\int_X \om(g,x)^{-2} \, d\mu(x) \leq \exp(\kappa_0 \, \|c_g\|_2^2) \quad\text{for all $g \in \Lambda$.}
\end{equation}
To prove \eqref{eq.estim-om-negative-power}, not that for all $0 < a,b < 1$,
$$\frac{a^3}{b^2} + \frac{(1-a)^3}{(1-b)^2} = 1 + \frac{a+2b-2ab-b^2}{b^2 \, (1-b)^2} \, (a-b)^2$$
and that
$$0 \leq \frac{a+2b-2ab-b^2}{b^2 \, (1-b)^2} \leq \kappa_0 \quad\text{for all}\;\; \delta \leq a,b \leq 1-\delta \; .$$
Fix an enumeration $I = \{i_1,i_2,\ldots\}$ and define the functions
$$\om_n : \Lambda \times X \recht (0,+\infty) : \om_n(g,x) = \prod_{k=1}^n \frac{\mu_{g \cdot i_k}(x_{i_k})}{\mu_{i_k}(x_{i_k})} \; .$$
Fix $g \in \Lambda$. By \eqref{eq.RN-conv-ae}, we have that $\om_n(g,x) \recht \om(g,x)$ for all $g \in \Lambda$ and a.e.\ $x \in X$. By Fatou's lemma, we get that
\begin{align*}
\int_X \om(g,x)^{-2} \, d\mu(x) & \leq \liminf_n \int_X \om_n(g,x)^{-2} \, d\mu(x) \\
& = \liminf_n \prod_{k = 1}^n \Bigl(  \, \frac{F(i_k)^3}{F(g \cdot i_k)^2} \, + \, \frac{(1-F(i_k))^3}{(1-F(g \cdot i_k))^2} \, \Bigr) \\
& \leq \liminf_n \prod_{k=1}^n \bigl( 1 + \kappa_0 (F(i_k) - F(g \cdot i_k))^2 \bigr) \\
& \leq \liminf_n \exp\Bigl( \kappa_0 \sum_{k=1}^n (F(i_k) - F(g \cdot i_k))^2 \Bigr) \\
& = \exp\bigl( \kappa_0 \|c_g\|_2^2 \bigr) \; .
\end{align*}
So, \eqref{eq.estim-om-negative-power} is proved.

Assume that $\kappa > \kappa_0$ and that $\sum_{g \in \Lambda} \exp(-\kappa \|c_g\|_2^2) = +\infty$. We have to prove that $\Lambda \actson (X,\mu)$ is conservative. Write $\kappa_1 = \frac{1}{2}(\kappa_0 + \kappa)$ and $\kappa_2 = \frac{3}{4}\kappa + \frac{1}{4} \kappa_0$. Note that $\kappa_0 < \kappa_1 < \kappa_2 < \kappa$. We claim that there exists an increasing sequence $s_k \in (0,+\infty)$ such that $\lim_k s_k = +\infty$ and
\begin{equation}\label{eq.claim-sk}
\# \bigl\{ g \in \Lambda \mid \|c_g\|_2^2 \leq s_k \bigr\} \geq \exp( \kappa_2 s_k) \quad\text{for all $k \geq 1$.}
\end{equation}
Define, for every $s \geq 0$,
$$\vphi(s) = \# \bigl\{ g \in \Lambda \mid \|c_g\|_2^2 \leq s \bigr\} \; .$$
Then,
$$+\infty = \frac{1}{\kappa} \sum_{g \in \Lambda} \exp(-\kappa \|c_g\|_2^2) = \int_0^{+\infty} \vphi(s) \, \exp(-\kappa s) \, ds \; .$$
If there exists an $s_0 \geq 0$ such that $\vphi(s) \leq \exp( \kappa_2  s)$ for all $s \geq s_0$, the integral on the right hand side is finite. So such an $s_0$ does not exist and the claim is proven. We fix the sequence $s_k$ as in the claim.

Choose finite subsets $\cF_k \subset \Lambda$ such that $\|c_g\|_2^2 \leq s_k$ for all $g \in \cF_k$ and
$$|\cF_k| \in \bigl[\exp(\kappa_2 s_k) - 1,\exp(\kappa_2 s_k)\bigr] \; .$$
For every $k$ and every $g \in \cF_k$, define
$$\cU_{g,k} = \{x \in X \mid \om(g,x) \leq \exp(-\kappa_1 s_k)\} \; .$$
When $x \in \cU_{g,k}$, we have $\om(g,x)^{-2} \geq \exp(2 \kappa_1 s_k)$. It thus follows from \eqref{eq.estim-om-negative-power} that
$$\mu(\cU_{g,k}) \leq \exp((\kappa_0 - 2 \kappa_1)s_k)$$
for all $k$ and all $g \in \cF_k$. Defining $\cV_k = \bigcup_{g \in \cF_k} \cU_{g,k}$, we get that
$$\mu(\cV_k) \leq \exp((\kappa_2 + \kappa_0 - 2 \kappa_1) s_k) = \exp(- \eps s_k) \; ,$$
where $\eps = (\kappa-\kappa_0)/4 > 0$. So, $\mu(\cV_k) \recht 0$ when $k \recht \infty$.

When $x \in X \setminus \cV_k$, we have $\om(g,x) \geq \exp(-\kappa_1 s_k)$ for all $g \in \cF_k$. Therefore,
$$\sum_{g \in \Lambda} \om(g,x) \geq |\cF_k| \, \exp(-\kappa_1 s_k) \geq \exp((\kappa_2-\kappa_1)s_k) - 1 \; .$$
Since the right hand side tends to infinity as $k \recht \infty$, it follows that $\sum_{g \in \Lambda} \om(g,x) = +\infty$ for a.e.\ $x \in X$. So, $\Lambda \actson (X,\mu)$ is conservative.

%
%

To prove the second statement, we claim that
\begin{equation}\label{eq.estim-sqrt-om}
\int_X \sqrt{\om(g,x)} \, d\mu(x) \leq \exp\bigl(-\frac{1}{2} \, \|c_g\|_2^2 \bigr) \quad\text{for all $g \in \Lambda$.}
\end{equation}
The proof of \eqref{eq.estim-sqrt-om} is identical to the proof of \eqref{eq.estim-om-negative-power}, using that
$$\sqrt{ab} + \sqrt{(1-a)(1-b)} \leq 1 - \frac{1}{2}(b-a)^2 \quad\text{for all $0 \leq a,b \leq 1$.}$$
Assuming that $\sum_{g \in \Lambda} \exp(- \frac{1}{2} \|c_g\|_2^2) < +\infty$, it follows from \eqref{eq.estim-sqrt-om} that
$$\int_X \Bigl( \sum_{g \in \Lambda} \sqrt{\om(g,x)}\Bigr) \, d\mu(x) < +\infty \; .$$
So for a.e.\ $x \in X$, we have $\sum_{g \in \Lambda} \sqrt{\om(g,x)} < +\infty$ and, a fortiori, $\sum_{g \in \Lambda} \om(g,x) < \infty$. So, $\Lambda \actson (X,\mu)$ is dissipative.
\end{proof}

\section{\boldmath Groups with positive first $L^2$-Betti number}

We prove that ``almost all'' groups with positive first $L^2$-Betti number admit a nonsingular Bernoulli action of type III$_1$.

\begin{theorem}\label{thm.positive-betti-III1}
Let $G$ be a countable group with $\beta_1^{(2)}(G) > 0$. Assume that one of the following conditions holds.
\begin{enumlist}
\item\label{cond-G-one} $G$ has at least one element of infinite order.
\item\label{cond-G-two} $G$ admits an infinite amenable subgroup.
\item\label{cond-G-three} $\beta_1^{(2)}(G) \geq 1$.
\item\label{cond-G-four} $G$ is residually finite; or more generally, $G$ admits a finite index subgroup $G_0 < G$ such that $[G:G_0] \geq \beta_1^{(2)}(G)^{-1}$.
\end{enumlist}
Then $G$ satisfies the assumptions of Lemma \ref{lem.construction} below and thus, $G$ admits a nonsingular Bernoulli action that is essentially free, ergodic, of type III$_1$ and nonamenable in the sense of Zimmer and that has a weakly mixing Maharam extension.
\end{theorem}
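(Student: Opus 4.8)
The plan is to notice that the second half of the conclusion (``\dots{} admits a nonsingular Bernoulli action that is essentially free, ergodic, of type III$_1$ \dots'') is exactly the content of Lemma \ref{lem.construction}, so the work of the theorem itself is only to verify \emph{its hypotheses}. I expect that hypothesis to be: \emph{$G$ contains an infinite subgroup $\Lambda < G$ with $\beta_1^{(2)}(\Lambda) < \beta_1^{(2)}(G)$}. The reason this is the right reduction is that such a pair produces a nonzero $1$-cocycle $c : G \recht \ell^2(G)$ vanishing on $\Lambda$; since $\Lambda$ is infinite we have $(\ell^2 G)^\Lambda = 0$, so any cocycle vanishing on $\Lambda$ is automatically non-inner, and one then runs the construction \eqref{eq.one-cocycle-c} on $I = G$ with $F$ encoding $c$. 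Thus the entire theorem collapses to a cohomological existence problem, and conditions \eqref{cond-G-one}--\eqref{cond-G-four} are four sufficient criteria for it.

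The bridge from $\beta_1^{(2)}(\Lambda) < \beta_1^{(2)}(G)$ to the cocycle is a von Neumann dimension count. Working in the space $\cH$ of harmonic $1$-cocycles $G \recht \ell^2(G)$, one has $\dim_{L(G)}\cH = \beta_1^{(2)}(G)$, and restriction to $\Lambda$ is an $L(\Lambda)$-equivariant map into the reduced cohomology $\overline{H}^1(\Lambda,\ell^2(G))$, whose $L(\Lambda)$-dimension equals $[G:\Lambda]\,\beta_1^{(2)}(\Lambda)$ by Shapiro's lemma. Converting between $L(\Lambda)$- and $L(G)$-dimensions by the index, the kernel of restriction — the harmonic cocycles vanishing on $\Lambda$ — satisfies
\[
\dim_{L(G)}\{c \in \cH : c|_\Lambda = 0\} \;\geq\; \beta_1^{(2)}(G) - \beta_1^{(2)}(\Lambda)\; ,
\]
which is strictly positive exactly under the hypothesis. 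A technical point to discharge is upgrading a harmonic cocycle that is merely cohomologically trivial on $\Lambda$ to one that genuinely vanishes on $\Lambda$; again the infiniteness of $\Lambda$, through $(\ell^2 G)^\Lambda = 0$, is what makes the correcting coboundary unique and the vanishing honest.

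Conditions \eqref{cond-G-one} and \eqref{cond-G-two} are then immediate: an element of infinite order gives $\Lambda = \langle g\rangle \cong \Z$ with $\beta_1^{(2)}(\Z)=0$, while any infinite amenable subgroup has $\beta_1^{(2)}(\Lambda)=0$ by Cheeger--Gromov; either way $\beta_1^{(2)}(\Lambda)=0<\beta_1^{(2)}(G)$. For the remaining conditions the target is a subgroup of \emph{small but possibly nonzero} $\beta_1^{(2)}$, and here I would combine the inequality above with the complementary estimate $\beta_1^{(2)}(\Lambda) \leq d(\Lambda)-1$ for an infinite subgroup generated by $d(\Lambda)$ elements, reducing matters to exhibiting an infinite $\Lambda$ with $d(\Lambda)-1 < \beta_1^{(2)}(G)$. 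For condition \eqref{cond-G-four} I would first enlarge the budget by passing to a finite-index subgroup: multiplicativity gives $\beta_1^{(2)}(G_0) = [G:G_0]\,\beta_1^{(2)}(G) \geq 1$, with residual finiteness guaranteeing such a $G_0$; one then seeks $\Lambda$ inside $G_0$ (so $\Lambda < G_0 < G$ and the cocycle already lives on $G$). Condition \eqref{cond-G-three}, $\beta_1^{(2)}(G)\geq 1$, is the normalized form of this, where the unit of $\beta_1^{(2)}$-budget is what one spends on the generators of $\Lambda$.

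The main obstacle is precisely conditions \eqref{cond-G-three} and \eqref{cond-G-four} in the genuinely new regime: a torsion group with positive $\beta_1^{(2)}$ having neither an element of infinite order nor an infinite amenable subgroup, so that the clean cases \eqref{cond-G-one}--\eqref{cond-G-two} are unavailable. One must then manufacture an infinite subgroup $\Lambda$ economical enough — few generators relative to the available budget — to keep $\beta_1^{(2)}(G)-\beta_1^{(2)}(\Lambda)$ positive, and the finite-index and residual-finiteness hypotheses are exactly what inflate the budget, via $\beta_1^{(2)}(G_0)=[G:G_0]\,\beta_1^{(2)}(G)$, until such a $\Lambda$ appears. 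I expect the real difficulty to be the boundary case $\beta_1^{(2)}=1$ together with turning the dimension lower bound into a genuine nonzero cocycle that vanishes on $\Lambda$, rather than a mere almost-invariant or reduced-cohomology statement.
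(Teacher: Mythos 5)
Your overall reduction and your treatment of conditions \ref{cond-G-one} and \ref{cond-G-two} match the paper: an element of infinite order or an infinite amenable subgroup $\Lambda$ gives $\beta_1^{(2)}(\Lambda)=0<\beta_1^{(2)}(G)$, and one takes $G_0=G$. One correction to your setup, though: the actual hypothesis of Lemma \ref{lem.construction} is the existence of $\Lambda < G_0 < G$ with $\Lambda$ infinite, $G_0$ of finite index and $\beta_1^{(2)}(\Lambda)<\beta_1^{(2)}(G_0)$ --- the comparison is with $G_0$, not with $G$. This matters: in case \ref{cond-G-four} the subgroup $\Lambda$ you produce inside $G_0$ will in general satisfy $\beta_1^{(2)}(\Lambda)\geq \beta_1^{(2)}(G)$ (take $\beta_1^{(2)}(G)$ tiny and $[G:G_0]$ large), so your stated reduction target ``$\beta_1^{(2)}(\Lambda)<\beta_1^{(2)}(G)$'' is not what your own case-4 argument verifies; the paper's lemma is formulated with the intermediate $G_0$ precisely to absorb this, and the extension of the cocycle data from $G_0$ to $G$ is nontrivial work done inside the lemma, not something that comes for free.

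The genuine gap is in conditions \ref{cond-G-three} and \ref{cond-G-four}, exactly where you say ``the real difficulty'' lies --- and you leave it open. Two things are missing. First, in the remaining case $G_0$ is a torsion group with no infinite amenable subgroups, and you give no mechanism for producing an infinite subgroup with few generators; the paper invokes \cite[Theorem 7]{St66}: if every pair $a,b \in G_0$ generated a finite subgroup, then $G_0$ would contain an infinite abelian (hence amenable) subgroup, so some $\Lambda = \langle a,b\rangle$ is infinite. Second, and more seriously, your proposed estimate $\beta_1^{(2)}(\Lambda) \leq d(\Lambda)-1$ is too weak: for a $2$-generated $\Lambda$ it gives only $\beta_1^{(2)}(\Lambda) \leq 1$, which does not beat $\beta_1^{(2)}(G_0) \geq 1$ in the boundary case $\beta_1^{(2)}(G_0)=1$, while $d(\Lambda)=1$ is impossible since cyclic subgroups of a torsion group are finite. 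The paper closes this by exploiting the torsion of the generators: any cocycle $\gamma : \Lambda \recht \ell^2(\Lambda)$ is cohomologous to one vanishing on the finite group $\langle a \rangle$ and is then determined by $\gamma_b$, while the inner cocycles vanishing on $a$ arise from $\{\eta \in \ell^2(\Lambda) \mid a \cdot \eta = \eta\}$, a space of $L(\Lambda)$-dimension $1/n$ (with $n$ the order of $a$) on which $\eta \mapsto \eta - b\cdot\eta$ is injective because $\Lambda$ is infinite; hence $\beta_1^{(2)}(\Lambda) \leq 1 - 1/n < 1 \leq \beta_1^{(2)}(G_0)$. Without this strict improvement (or some substitute for it) your argument does not cover $\beta_1^{(2)}(G_0)=1$, which is precisely the normalized case that condition \ref{cond-G-three} is about.
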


Although it sounds unlikely that all groups with positive first $L^2$-Betti number satisfy one of the assumptions of Theorem \ref{thm.positive-betti-III1}, we have no explicit counterexample. It is in particular unclear whether all torsion groups constructed in \cite[Theorem 2.3]{Os08} as quotients of $\Z / m\Z * \Z / m \Z$ satisfy condition \eqref{cond-G-four}.

Theorem \ref{thm.positive-betti-III1} is deduced from the following technical lemma that we prove first.

\begin{lemma}\label{lem.construction}
Let $G$ be a countable infinite group. Assume that $G$ admits subgroups $\Lambda < G_0 < G$ such that $\Lambda$ is infinite, $G_0 < G$ has finite index and $\beta_1^{(2)}(\Lambda) < \beta_1^{(2)}(G_0)$. Then $G$ admits a nonsingular Bernoulli action that is essentially free, ergodic, of type III$_1$ and nonamenable in the sense of Zimmer and that has a weakly mixing Maharam extension.
\end{lemma}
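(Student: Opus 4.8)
The plan is to build the action first for the finite-index subgroup $G_0$ and then transport it to $G$. For $G_0$ I want a free $G_0$-set $I_0$ (so that $\ell^2(I_0)$ is a multiple of the regular representation of $G_0$) together with a function $F : I_0 \recht [1/3,2/3]$ satisfying \eqref{eq.conds-F} whose associated $1$-cocycle $c : G_0 \recht \ell^2(I_0)$ from \eqref{eq.one-cocycle-c} is (a) non-inner and (b) vanishing on $\Lambda$, i.e.\ $c_\lambda = 0$ for all $\lambda \in \Lambda$. Granting such an $F$, I would read off conservativeness from Proposition \ref{prop.conservative}, deduce essential freeness, ergodicity and nonamenability by soft arguments, and finally prove that the Maharam extension \eqref{eq.maharam} is weakly mixing, which by Section 2 is exactly "weakly mixing and of stable type III$_1$''. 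Since $G_0 < G$ has finite index and the induced action of $G$ from a Bernoulli action of a finite-index subgroup is again a Bernoulli action, I would then induce and check that finite-index induction preserves essential freeness, conservativeness, nonamenability and weak mixing of the Maharam extension.

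The hypothesis $\beta_1^{(2)}(\Lambda) < \beta_1^{(2)}(G_0)$ enters in the existence of $c$. A non-inner cocycle vanishing on the infinite subgroup $\Lambda$ represents a nonzero class in the kernel of the restriction map $\operatorname{res} : \overline{H}^1(G_0,\ell^2 G_0) \recht \overline{H}^1(\Lambda,\ell^2 G_0)$ on reduced $L^2$-cohomology. In the case most relevant to Theorem \ref{thm.positive-betti-III1}, where $\Lambda$ is cyclic or amenable and hence $\beta_1^{(2)}(\Lambda)=0$, the target of $\operatorname{res}$ has von Neumann dimension $0$ and is therefore trivial, so every non-inner class (these exist since $\beta_1^{(2)}(G_0)>0$) lies in the kernel; in general the strict inequality forces $\ker(\operatorname{res})\neq 0$ by a von Neumann dimension count. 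The delicate point is to pass from "$c|_\Lambda$ trivial in reduced cohomology'' to "$c|_\Lambda$ \emph{bounded}'': once $\lambda\mapsto c_\lambda$ has bounded orbit, the circumcenter of that orbit is a fixed point of the affine $\Lambda$-action, so $c|_\Lambda$ is a genuine coboundary that can be subtracted off to achieve $c_\lambda=0$ exactly. I expect to arrange this by choosing a suitable representative of the class.

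A second requirement is realizability by a function valued in $[1/3,2/3]$: since $c_g(i)=F(i)-F(g^{-1}\cdot i)$ forces $\sup_{g,i}|c_g(i)|=\operatorname{osc}(F)\le 1/3$, only cocycles with uniformly bounded entries (a bounded potential) can occur. For free groups this is automatic, because cocycles are free: setting $c=0$ on the generator of $\Lambda$ and choosing the other generator value to be a finite difference of point masses yields $\{-1,0,1\}$-valued entries; in general I would secure boundedness either by a spreading construction that redistributes large entries over added multiplicity in $I_0$, or by an explicit geometric cocycle. Rescaling $F=\tfrac12+\eps h$ with $\eps$ small lands in $[1/3,2/3]$ and preserves both non-innerness and vanishing on $\Lambda$. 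The easy consequences then follow: because $\|c_\lambda\|_2=0$ for all $\lambda\in\Lambda$, we get $\sum_{\lambda\in\Lambda}\exp(-16\|c_\lambda\|_2^2)=+\infty$, so Proposition \ref{prop.conservative} (in its $1/3\le F\le 2/3$ form) makes $\Lambda\actson(X,\mu)$ conservative, and since every $G_0$-wandering set is a fortiori $\Lambda$-wandering, $G_0\actson(X,\mu)$ is conservative; nonatomicity holds as $\sum_i\min(F(i),1-F(i))\ge\sum_i\tfrac13=+\infty$, giving essential freeness; and nonamenability in the sense of Zimmer follows since $\beta_1^{(2)}(G_0)>0$ makes $G_0$ (hence $G$) nonamenable while the regular representation is weakly contained in the Koopman representation, so Zimmer-amenability would force $G_0$ amenable.

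The main obstacle is the final step: showing the Maharam extension $G_0\actson(X\times\R,\mu\times\nu)$ is weakly mixing, which at once yields ergodicity, weak mixing, and type III$_1$ (ruling out II, III$_\lambda$ and III$_0$). Here I would exploit the product structure by decomposing $L^2(X\times\R)$ along the Fourier--Wiener chaos indexed by finite subsets $S\subset I_0$ (tensoring the centred factors $L^2(\{0,1\},\mu_i)\ominus\C$) together with characters of $\R$. The group $G_0$ permutes these components according to its action on finite subsets of $I_0$, weighted by the Radon--Nikodym cocycle $\om$; since $G_0$ acts on $I_0$ with infinite orbits, no nonempty finite $S$ is invariant, which disposes of all higher chaos, while the $S=\varnothing$ component sits on $L^2(\R)$, where an invariant vector would force a nonzero $s\in\R$ with $s\log\om(g,\cdot)\in 2\pi\Z$ a.e.\ — excluded because the non-innerness of $c$ forces the essential range of $\log\om$ to be all of $\R$, pinning the type to III$_1$. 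Turning this conservativeness-plus-non-innerness picture into genuine weak mixing of the \emph{infinite}-measure-preserving Maharam extension is the technical heart, and I expect it to demand a careful conservative ergodic-theoretic argument rather than a purely spectral one. Transporting to $G$ is then routine, finite-index induction of a Bernoulli action being Bernoulli and preserving weak mixing of the Maharam extension.
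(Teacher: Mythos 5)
Your skeleton (a cocycle vanishing on $\Lambda$, realized by a bounded potential, then conservativeness plus an ergodicity argument, then passage from $G_0$ to $G$) is the same as the paper's, but several of your steps have genuine gaps, two of them fatal as stated. First, the cohomological step: you work in \emph{reduced} cohomology, and the passage from ``$c|_\Lambda$ trivial in $\overline{H}^1$'' to ``$c|_\Lambda$ bounded'' is false, not merely delicate. In the most relevant case, $\Lambda$ infinite amenable (e.g.\ $\Lambda\cong\Z$), one has $\overline{H}^1(\Lambda,\ell^2(G_0))=0$, while by \cite[Theorem 2.5]{PT10} every non-inner cocycle of an infinite amenable group into a multiple of its regular representation is \emph{proper}; so a class in your kernel can perfectly well restrict to an unbounded cocycle on $\Lambda$, and your circumcenter has no bounded orbit to act on. The paper instead invokes \cite[Theorem 2.2]{PT10}: the $L(G)$-dimension of the \emph{non-reduced} $H^1(\Lambda,\ell^2(G))$ equals $\beta_1^{(2)}(\Lambda)$, so the kernel of the non-reduced restriction map has positive dimension; non-reduced triviality means $c|_\Lambda$ is a genuine coboundary, which one subtracts to get exact vanishing on $\Lambda$. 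Relatedly, your bounded-potential step is only a sketch, whereas the paper's solution is to truncate the implementing function $H$ (truncation is $1$-Lipschitz, hence preserves \eqref{eq.conds-F} and the $\Lambda$-invariance); and after truncation what matters is not non-innerness but that the potential still takes at least three values, so that its range generates a dense subgroup of $\R$. Your claim that non-innerness of $c$ forces the essential range of $\log\om$ to be dense is false: Proposition \ref{prop.free-group-prescribed-invariants} produces non-inner examples of type III$_\lambda$.

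Second, the ergodicity/weak-mixing heart. Your chaos-decomposition argument cannot work: the Koopman operators of a nonsingular Bernoulli action carry the weights $\sqrt{\om(g^{-1},\cdot)}$ and do not permute the chaos components, and if ``infinite orbits dispose of all higher chaos'' were valid, every nonsingular Bernoulli action with infinite orbits would be ergodic --- contradicted by the dissipative examples in this very paper (Corollary \ref{cor.explicit-Z-power-dissipative}, Proposition \ref{prop.free-group-dissipative}). The lever you never use is that $c|_\Lambda=0$ means $F$ is $\Lambda$-invariant, so $\Lambda\actson(X,\mu)$ is not merely conservative but a \emph{mixing pmp Bernoulli action}; hence every $G$-invariant $Q\in L^\infty(Y\times X\times\R)$ is independent of the $X$-variable, and the density of the range of the truncated potential then removes the $\R$-dependence. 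Third, your nonamenability argument is wrong: there exist Zimmer-amenable, essentially free nonsingular actions of nonamenable groups whose Koopman representation contains a multiple of the regular representation (e.g.\ the product of a boundary action with a pmp Bernoulli action), so weak containment of the regular representation gives nothing; one needs the quantitative criterion of Proposition \ref{prop.criterion-nonamenable}, the lower bound of Lemma \ref{lem.lower-est-cocycle}, and the choice of $\eps_1$ small against a fixed spectral-gap inequality \eqref{eq.choice-gap}. Fourth, the passage from $G_0$ to $G$ is not routine and your guiding principle is false: the operation taking Bernoulli to Bernoulli is co-induction, and the co-induced action restricted to $G_0$ is a diagonal product of $[G:G_0]$ twisted copies of the base; for nonsingular actions such products can fail to be conservative (powers of ergodic type III$_1$ shifts can be dissipative, Corollary \ref{cor.explicit-Z-power-dissipative}), so ergodicity, type and weak mixing of the Maharam extension are not preserved by co-induction in general. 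This is precisely why the paper's finite-index case is a separate, substantial argument: reduce amenable $\Lambda$ to the case $G_0=G$, and for nonamenable $\Lambda$ prove conservativeness of every conjugate $g_i\Lambda g_i^{-1}$ acting on all of $X$ via a Schoenberg positive-definiteness argument, then conclude coordinate block by coordinate block using \cite[Theorem 2.3]{SW81}.
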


\begin{proof}
We first prove the lemma when $\Lambda < G$ is an infinite subgroup with $\beta_1^{(2)}(\Lambda) < \beta_1^{(2)}(G)$, i.e.\ the case where $G_0 = G$. Denote by $\lambda : G \recht \cU(\ell^2(G))$ the left regular representation. Since $\beta_1^{(2)}(G) > 0$, we have that $G$ is nonamenable and we can fix a finite subset $\cF \subset G$ and $\eps_0 > 0$ such that
\begin{equation}\label{eq.choice-gap}
\Bigl\| \sum_{g \in \cF} \lambda_g \Bigr\| \leq (1-\eps_0) |\cF| \; .
\end{equation}

By \cite[Theorem 2.2]{PT10}, we have that $\beta_1^{(2)}(\Lambda)$ equals the $L(G)$-dimension of $H^1(\Lambda,\ell^2(G))$. So, the kernel of the restriction map $H^1(G,\ell^2(G)) \recht H^1(\Lambda,\ell^2(G))$ has positive $L(G)$-dimension. Therefore, we can choose a non-inner $1$-cocycle $b : G \recht \ell^2(G)$ with the property that $b_g = 0$ for all $g \in \Lambda$.

Denote by $H: G \recht \C$ the function given by $H(k) = b_k(k)$ for all $k \in G$. Then, $H(e) = 0$ and
$$b_g(k) = H(k) - H(g^{-1} k) \quad \text{for all}\;\;  g,k \in G \; .$$
Since $b$ vanishes on $\Lambda$, the function $H$ is invariant under left translation by $\Lambda$. Since $b$ is not identically zero, $H$ is not the zero function. Replacing $b$ by $i b$ if needed, we may assume that the real part $\real H$ is not identically zero. At the end of the proof, we explain that the $1$-cocycle $b$ may be chosen so that $\real H$ takes at least three different values.

For any fixed $\kappa_1,\kappa_2 > 0$, we define the function
$$\cF : \R \recht [-\kappa_1,\kappa_2] : \cF(t) = \begin{cases} - \kappa_1 &\;\;\text{if $t \leq -\kappa_1$,}\\
t &\;\;\text{if $-\kappa_1 \leq t \leq \kappa_2$,}\\
\kappa_2 &\;\;\text{if $t \geq \kappa_2$.}\end{cases}$$
Note that $|\cF(t) - \cF(s)| \leq |t-s|$ for all $s,t \in \R$.

We define $K : G \recht [-\kappa_1,\kappa_2] : K(k) = \cF(\real H(k))$. Since $\real H$ takes at least three different values, we can fix $\kappa_1,\kappa_2 > 0$ so that the range of $K$ generates a dense subgroup of $\R$, meaning that there is no $a > 0$ such that $K(k) \in \Z a$ for all $k \in G$. Note that $K$ is invariant under left translation by $\Lambda$.

We then fix $\eps_1 > 0$ such that
$$\exp(\eps_1 \kappa_i) \leq 2 \quad\text{for $i=1,2$, and}\quad \exp\Bigl(-\frac{3}{5} \, \eps_1^2 \, \|b_g\|_2^2\Bigr) > 1 -\eps_0 \quad\text{for all $g \in \cF$.}$$
Define the function
$$F : G \recht [1/3,2/3] : F(k) = \frac{1}{1+\exp(\eps_1 K(k))} \; .$$
Associated with $F$, we have the product probability measure $\mu$ on $X = \{0,1\}^G$ given by $\mu = \prod_{k \in G} \mu_k$ with $\mu_k(0) = F(k)$.

For every $g \in G$, we have that
$$\sum_{k \in G} |F(gk) - F(k)|^2 \leq \eps_1^2 \sum_{k \in G} |K(gk) - K(k)|^2 \leq \eps_1^2 \sum_{k \in G} |H(gk) - H(k)|^2 = \eps_1^2 \, \|b_g\|_2^2 \; .$$
So, the Bernoulli action $G \actson (X,\mu)$ is essentially free, nonsingular and the $1$-cocycle $c : G \recht \ell^2(G)$ given by $c_g(k) = F(k)-F(g^{-1} k)$ satisfies $\|c_g\|_2 \leq \eps_1 \, \|b_g\|_2$ for all $g \in G$.


Denote by $\om : G \times X \recht (0,+\infty)$ the Radon-Nikodym cocycle given by \eqref{eq.general-RN} and consider the Maharam extension $G \actson (X \times \R,\mu \times \nu)$ given by \eqref{eq.maharam}. Let $G \actson (Y,\eta)$ be any pmp action and consider the diagonal action $G \actson (Y \times X \times \R, \eta \times \mu \times \nu)$. We prove that $L^\infty(Y \times X \times \R)^G = L^\infty(Y)^G \ot 1 \ot 1$. Once this statement is proved, it follows that $G \actson (X,\mu)$ is ergodic and of type III$_1$ and that its Maharam extension is weakly mixing.

Since $F$ is invariant under left translation by $\Lambda$, we have that $\om(g,x) = 1$ for all $g \in \Lambda$, $x \in X$ and we have that the action $\Lambda \actson (X,\mu)$ is isomorphic with a probability measure preserving Bernoulli action of $\Lambda$. So, a $G$-invariant function $Q \in L^\infty(Y \times X \times \R)$ is of the form $Q(y,x,s) = P(y,s)$ for some $P \in L^\infty(Y \times \R)$ satisfying
$$
P(g \cdot y, s+\log(\om(g,x))) = P(y,s) \quad\text{for all $g \in G$ and a.e.\ $(y,x,s) \in Y \times X \times \R$.}
$$
It follows that
$$P(y,s+\log(\om(g,x)) - \log(\om(g,x'))) = P(y,s)$$
for all $g \in G$ and a.e.\ $(y,x,x',s) \in Y \times X \times X \times \R$. For every $g \in G$, denote by $R_g$ the essential range of the map
$$X \times X \recht \R : (x,x') \mapsto \log(\om(g,x)) - \log(\om(g,x')) \; .$$
To conclude that $P \in L^\infty(Y) \ot 1$, it suffices to prove that $\bigcup_{g \in G} R_g$ generates a dense subgroup of $\R$. So it suffices to prove that there is no $a > 0$ such that $\log(\om(g,x)) - \log(\om(g,x')) \in \Z a$ for all $g \in G$ and a.e.\ $(x,x') \in X \times X$. Assume the contrary.

Fix $g,k \in G$ and define the measure preserving factor map
$$\pi : (\{0,1\} \times X,\mu_k \times \mu) \recht (X,\mu) : (\pi(z,x))_h = \begin{cases} x_h &\;\;\text{if $h \neq k$,}\\ z &\;\;\text{if $h = k$.}\end{cases}$$
By our assumption, $\log(\om(g,\pi(z,x))) - \log(\om(g,x)) \in \Z a$ for a.e.\ $z \in \{0,1\}$, $x \in X$. Since
$$\log(\om(g,x)) = \sum_{h \in G} \bigl( \log(\mu_{gh}(x_h)) - \log(\mu_h(x_h))\bigr)$$
with convergence a.e., we find that
$$\log(\om(g,\pi(z,x))) - \log(\om(g,x)) = \bigl(\log(\mu_{gk}(z)) - \log(\mu_k(z))\bigr) - \bigl(\log(\mu_{gk}(x_k)) - \log(\mu_k(x_k))\bigr)$$
for all $g \in G$ and a.e.\ $z \in \{0,1\}$, $x \in X$. Taking $z=1$ and $x_k = 0$, it follows that
$$\log \Bigl( \frac{\mu_{gk}(1)}{\mu_{gk}(0)} \Bigr) - \log \Bigl( \frac{\mu_{k}(1)}{\mu_{k}(0)} \Bigr) \in \Z a \; .$$
But the left hand side equals $\eps_1 (K(gk) - K(k))$. Since $g,k \in G$ were arbitrary and $K(e) = 0$, we conclude that $K(g) \in \Z (a/\eps_1)$ for all $g \in G$, contrary to our choice of $K$.

So, we have proven that $P \in L^\infty(Y) \ot 1$ and thus, $Q \in L^\infty(Y) \ot 1 \ot 1$. This means that $G \actson (X,\mu)$ is ergodic, of type III$_1$ and with weakly mixing Maharam extension.

By Lemma \ref{lem.lower-est-cocycle} below and \eqref{eq.choice-gap}, we get that
\begin{align*}
\sum_{g \in \cF} \int_X \sqrt{\om(g,x)} d\mu(x) &\geq \sum_{g \in \cF} \exp\Bigl(- \frac{3}{5} \|c_g\|_2^2 \Bigr) \geq \sum_{g \in \cF} \exp\Bigl(- \frac{3}{5} \eps_1^2 \|b_g\|_2^2 \Bigr) \\ &> (1-\eps_0) |\cF| \geq \Bigl\| \sum_{g \in \cF} \lambda_g \Bigr\| \; .
\end{align*}
So by Proposition \ref{prop.criterion-nonamenable} below, we conclude that the action $G \actson (X,\mu)$ is nonamenable.

It remains to prove that we may choose a $1$-cocycle $c : G \recht \ell^2(G)$ with $c_g = 0$ for all $g \in \Lambda$ and such that the associated function $\real H : G \recht \R$, determined by $H(e)=0$ and $c_g = H - g \cdot H$ for all $g \in G$, takes at least three different values. The space of $1$-cocycles $c : G \recht \ell^2(G)$ that vanish on $\Lambda$ is an $L(G)$-module of positive $L(G)$-dimension. It is in particular an infinite dimensional vector space. So we can choose $1$-cocycles $c,c' : G \recht \ell^2(G)$ that vanish on $\Lambda$ and such that the associated functions $\real H : G \recht \R$ and $\real H' : G \recht \R$ are $\R$-linearly independent and, in particular, nonzero. If either $\real H$ or $\real H'$ takes at least three values, we are done. Otherwise, after multiplying $c$ and $c'$ with nonzero real numbers, we may assume that $\real H = 1_A$ and $\real H' = 1_{A'}$, where $A,A'$ are distinct nonempty subsets of $G$. But then the function $\real H + 2 \real H'$, associated with the $1$-cocycle $c+2c'$, takes at least three different values.

Next assume that $\Lambda < G_0 < G$ are subgroups such that $\Lambda$ is infinite, $G_0 < G$ has finite index and $\beta_1^{(2)}(\Lambda) < \beta_1^{(2)}(G_0)$. Since $\beta_1^{(2)}(G_0) = [G:G_0] \, \beta_1^{(2)}(G)$, we also have that $\beta_1^{(2)}(G) > 0$. So if $\Lambda$ is amenable, we have $\beta_1^{(2)}(\Lambda) = 0 < \beta_1^{(2)}(G)$ and we can apply the first part of the proof. So we may assume that $\Lambda$ is nonamenable.

Choose a finite subset $\cF \subset G$ and $\eps_0 > 0$ such that \eqref{eq.choice-gap} holds. Since $\beta_1^{(2)}(\Lambda) < \beta_1^{(2)}(G_0)$, we can proceed as in the first part of the proof and find $\kappa_1,\kappa_2 > 0$ and a function $K : G_0 \recht [-\kappa_1,\kappa_2]$ satisfying the following properties.
\begin{itemlist}
\item The range of $K$ generates a dense subgroup of $\R$.
\item $K$ is invariant under left translation by $\Lambda$.
\item Writing $c_g(k) = K(k) - K(g^{-1}k)$ for all $g,k \in G_0$, we have that $c_g \in \ell^2(G_0)$ for all $g \in G_0$.
\end{itemlist}
Write $G = \sqcup_{i=1}^\kappa g_i G_0$. Define
$$F : G \recht [-\kappa_1,\kappa_2] : F(g_i h) = K(h) \quad\text{for all $i \in \{1,\ldots,\kappa\}$ and $h \in G_0$.}$$
For every $g,h \in G$, define $b_g(h) = F(h) - F(g^{-1} h)$. By construction, $b_g \in \ell^2(G)$ for every $g \in G$ and $G \recht \ell^2(G) : g \mapsto b_g$ is a cocycle. Note however that $b$ need not vanish on $\Lambda$.

For every $i \in \{1,\ldots,\kappa\}$, define the nonamenable group $\Lambda_i = g_i \Lambda g_i^{-1}$. By Schoenberg's theorem (see e.g.\ \cite[Theorem D.11]{BO08}), for every $\eps > 0$ and $i \in \{1,\ldots,\kappa\}$, the map
$$\vphi_{\eps,i} : \Lambda_i \recht \R  : h \mapsto \exp(-8 \, \eps^2 \, \|b_h\|_2^2)$$
is a positive definite function on $\Lambda_i$. When $\eps \recht 0$, we get that $\vphi_{\eps,i} \recht 1$ pointwise. Since $\Lambda_i$ is nonamenable, it follows that $\vphi_{\eps,i} \not\in \ell^2(\Lambda_i)$ for $\eps$ small enough. So we can choose $\eps_1 > 0$ small enough such that
\begin{align}
& \exp(\eps_1 \kappa_i) \leq 2 \quad\text{for $i=1,2$,} \quad \exp\Bigl(-\frac{3}{5} \, \eps_1^2 \, \|b_g\|_2^2\Bigr) > 1 -\eps_0 \quad\text{for all $g \in \cF$, and}\notag\\
& \sum_{h \in \Lambda_i} \exp\bigl(-16 \, \eps_1^2 \, \|b_h\|_2^2 \bigr) = +\infty \quad\text{for all $i \in \{1,\ldots,\kappa\}$.}\label{eq.makes-conservative}
\end{align}
For every $g \in G$, define the probability measure $\mu_g$ on $\{0,1\}$ given by
$$\mu_g(0) = \frac{1}{1+\exp(\eps_1 F(g))} \; .$$
Note that $\mu_g(0) \in [1/3,2/3]$ for all $g \in G$. Defining $d_g(h) = \mu_h(0) - \mu_{g^{-1}h}(0)$, we find that $\|d_g\|_2 \leq \eps_1 \, \|b_g\|_2$. So, $d_g \in \ell^2(G)$ and the Bernoulli action $G \actson (X,\mu) = \prod_{g \in G} (\{0,1\},\mu_g)$ is nonsingular and essentially free.

Choose an arbitrary pmp action $G \actson (Y,\eta)$ and consider the diagonal action $G \actson Y \times X \times \R$ of $G \actson Y$ and the Maharam extension $G \actson X \times \R$. Let $Q \in L^\infty(Y \times X \times \R)$ be $G$-invariant. We have to prove that $Q \in L^\infty(Y) \ot 1 \ot 1$.

For every subset $J \subset G$, define $(X_J,\mu_J) = \prod_{g \in J} (\{0,1\},\mu_g)$ and view $L^\infty(X_J) \subset L^\infty(X)$. Fix $i \in \{1,\ldots,\kappa\}$. We prove that $Q \in L^\infty(Y \times X_{G \setminus g_i G_0} \times \R)$. Since the map $K : G_0 \recht \R$ is $\Lambda$-invariant, we get that $\Lambda_i \actson (X_{g_i G_0},\mu_{g_i G_0})$ is a pmp Bernoulli action. By \eqref{eq.makes-conservative}, the inequality $\|d_g\|_2 \leq \eps_1 \, \|b_g\|_2$ and Proposition \ref{prop.conservative}, the action $\Lambda_i \actson X$ is conservative. This means that $\sum_{g \in \Lambda_i} \om(g,x) = +\infty$ for a.e.\ $x \in X$, so that also the diagonal action $\Lambda_i \actson Y \times X$ is conservative. A fortiori, the factor action $\Lambda_i \actson Y \times X_{G \setminus g_i G_0}$ is conservative and then also its Maharam extension $\Lambda_i \actson Y \times X_{G \setminus g_i G_0} \times \R$. Since we can view $\Lambda_i \actson Y \times X \times \R$ as the diagonal product of $\Lambda_i \actson Y \times X_{G \setminus g_i G_0} \times \R$ and the mixing pmp action $\Lambda_i \actson X_{g_i G_0}$, it follows from \cite[Theorem 2.3]{SW81} that the $\Lambda_i$-invariant functions in $L^\infty(Y \times X \times \R)$ belong to $L^\infty(Y \times X_{G \setminus g_i G_0} \times \R)$. So, $Q \in L^\infty(Y \times X_{G \setminus g_i G_0} \times \R)$.

Since this holds for every $i \in \{1,\ldots,\kappa\}$, it follows that $Q \in L^\infty(Y) \ovt 1 \ovt L^\infty(\R)$. We now proceed as in the first part of the proof. Since the range of $K$ generates a dense subgroup of $\R$, the same holds for $F$ and we conclude that $Q \in L^\infty(Y) \ot 1 \ot 1$.

The fact that $G \actson (X,\mu)$ is nonamenable in the sense of Zimmer follows exactly as in the first part of the proof.
\end{proof}

We now deduce Theorem \ref{thm.positive-betti-III1} from Lemma \ref{lem.construction} by proving that a group satisfying the assumptions of Theorem \ref{thm.positive-betti-III1} automatically admits subgroups $\Lambda < G_0 < G$ as in Lemma \ref{lem.construction}.

\begin{proof}[{Proof of Theorem \ref{thm.positive-betti-III1}}]
Let $G$ be a countable group with $\beta_1^{(2)}(G) > 0$, satisfying one of the properties in \ref{cond-G-one}--\ref{cond-G-four}. Since $\Z$ is amenable, case \ref{cond-G-one} follows from case \ref{cond-G-two}. In case \ref{cond-G-two}, if $\Lambda < G$ is an infinite amenable group, we have $\beta_1^{(2)}(\Lambda) = 0 < \beta_1^{(2)}(G)$ and taking $G_0 = G$, the assumptions of Lemma \ref{lem.construction} are satisfied.

Case \ref{cond-G-three} follows from case \ref{cond-G-four} by taking $G_0 = G$. So it remains to prove the theorem in case~\ref{cond-G-four}, i.e.\ in the presence of a finite index subgroup $G_0 < G$ with $[G:G_0] \geq \beta_1^{(2)}(G)^{-1}$. Then, $\beta_1^{(2)}(G_0) = [G:G_0] \, \beta_1^{(2)}(G) \geq 1$. Since we already proved the theorem in cases \ref{cond-G-one} and \ref{cond-G-two}, we may assume that $G_0$ is a torsion group without infinite amenable subgroups. We claim that there exist $a,b \in G_0$ such that the subgroup $\Lambda = \langle a,b\rangle$ generated by $a$ and $b$ is infinite. Indeed, if all two elements $a,b \in G_0$ generate a finite subgroup, it follows from \cite[Theorem 7]{St66} that $G_0$ contains an infinite abelian subgroup, contrary to our assumptions. So the claim is proved and we fix $a,b \in G_0$ generating an infinite subgroup $\Lambda = \langle a,b \rangle$.

We prove that $\beta_1^{(2)}(\Lambda) < 1$. Since $\beta_1^{(2)}(G_0) \geq 1$, the subgroups $\Lambda < G_0 < G$ then satisfy the assumptions of Lemma \ref{lem.construction}. Assume that $a$ has order $n$ and $b$ has order $m$. Since any cocycle $\gamma : \Lambda \recht \ell^2(\Lambda)$ is cohomologous to a cocycle that vanishes on the finite subgroup generated by $a$ and is then entirely determined by its value on $b$, we find that
\begin{align*}
\beta_1^{(2)}(\Lambda) = \dim_{L(\Lambda)}\bigl( \{\xi \in \ell^2(\Lambda) \mid \; & \text{there exists a $1$-cocycle $\gamma: \Lambda \recht \ell^2(\Lambda)$ with}\\
& \text{$\gamma_a = 0$ and $\gamma_b = \xi$} \; \} \bigr) \\ &\quad\quad\quad - \dim_{L(\Lambda)}\bigl( \{\eta - b \cdot \eta \mid \eta \in \ell^2(\Lambda) \;\; , \;\; a \cdot \eta = \eta \}\bigr) \; .
\end{align*}
The first term is bounded by $1$. Because $\Lambda$ is infinite and $a$ has order $n$, the second term equals
$$\dim_{L(\Lambda)} \bigl(\{ \eta \in \ell^2(\Lambda) \mid a \cdot \eta = \eta\}\bigr) = \frac{1}{n} \; .$$
So, $\beta_1^{(2)}(\Lambda) \leq 1- 1/n < 1$. This concludes the proof of the theorem.
\end{proof}

The following result is implicitly contained in the proof of \cite[Theorem 7]{DN10}. For completeness, we provide a detailed proof.

\begin{proposition}[{\cite[Theorem 7]{DN10}}]\label{prop.criterion-nonamenable}
Let $G$ be a countable group and $G \actson (X,\mu)$ a nonsingular action. Denote by $\om : G \times X \recht (0,+\infty)$ the Radon-Nikodym cocycle given by \eqref{eq.general-RN}. Denote by $\lambda : G \recht \cU(\ell^2(G))$ the left regular representation. If there exists a finite subset $\cF \subset G$ such that
$$
\sum_{g \in \cF} \int_X \sqrt{\om(g,x)} d\mu(x) > \Bigl\| \sum_{g \in \cF} \lambda_g \Bigr\| \; ,
$$
then the action $G \actson (X,\mu)$ is nonamenable in the sense of Zimmer.
\end{proposition}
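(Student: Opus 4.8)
The plan is to argue by contradiction: assume that $G \actson (X,\mu)$ is amenable in the sense of Zimmer and deduce that $\sum_{g \in \cF} \int_X \sqrt{\om(g,x)}\,d\mu(x) \leq \|\sum_{g\in\cF}\lambda_g\|$, contradicting the hypothesis. The guiding idea is to realize the numbers $\int_X \sqrt{\om(g,x)}\,d\mu(x)$ as limits of diagonal matrix coefficients of a single unitary representation that happens to be a multiple of the regular representation, and to evaluate those coefficients on the almost invariant vectors supplied by amenability.

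First I would set up the relevant unitary representation. On $\cH = L^2(X,\mu) \ot \ell^2(G)$, identified with $L^2(X,\mu;\ell^2(G))$, define $\pi_g$ by $(\pi_g \Xi)(x) = \om(g^{-1},x)^{1/2}\, \lambda_g\, \Xi(g^{-1}\cdot x)$. Using \eqref{eq.general-RN} together with the cocycle identity $\om(g^{-1},g\cdot x) = \om(g,x)^{-1}$, one checks that each $\pi_g$ is unitary, so $\pi$ is a unitary representation of $G$. Writing $U$ for the Koopman representation $(U_g f)(x) = \om(g^{-1},x)^{1/2} f(g^{-1}\cdot x)$ on $L^2(X,\mu)$, the computation $\pi_g(f \ot \delta_k) = U_g f \ot \delta_{gk}$ on elementary tensors shows that $\pi = U \ot \lambda$. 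By Fell's absorption principle (see e.g.\ \cite{BO08}), $U \ot \lambda$ is unitarily equivalent to $1 \ot \lambda$, a multiple of the regular representation, so the operator norms of finite combinations agree. In particular $\|\sum_{g\in\cF}\pi_{g^{-1}}\| = \|\sum_{g\in\cF}\lambda_{g^{-1}}\| = \|\sum_{g\in\cF}\lambda_g\|$, the last equality because $\sum_{g\in\cF}\lambda_{g^{-1}} = (\sum_{g\in\cF}\lambda_g)^*$.

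Next I would bring in amenability. By the characterization recalled in Section~2 (from \cite[Theorem 3.1.6]{AD01}), amenability of $G \actson (X,\mu)$ provides Borel maps $\xi_n : X \recht \ell^2(G)$ with $\|\xi_n(x)\|_2 = 1$ a.e.\ and with the stated asymptotic invariance. Setting $\Xi_n(x) = \xi_n(x)$, a unit vector in $\cH$, the key computation is to evaluate $\langle \pi_{g^{-1}} \Xi_n, \Xi_n\rangle = \int_X \om(g,x)^{1/2} \langle \lambda_{g^{-1}} \xi_n(g\cdot x), \xi_n(x)\rangle\,d\mu(x)$. This is exactly the expression appearing in the amenability criterion for the group element $g^{-1}$, tested against $P = \sqrt{\om(g,\cdot)}$; here I would note $P \in L^1(X,\mu)$ since $\int_X \sqrt{\om(g,x)}\,d\mu(x) \leq (\int_X \om(g,x)\,d\mu(x))^{1/2} = 1$ by Cauchy--Schwarz and \eqref{eq.general-RN}. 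Hence $\lim_n \langle \pi_{g^{-1}}\Xi_n,\Xi_n\rangle = \int_X \sqrt{\om(g,x)}\,d\mu(x)$ for each $g\in\cF$. Summing over $g \in \cF$ and using $|\langle(\sum_{g\in\cF}\pi_{g^{-1}})\Xi_n,\Xi_n\rangle| \leq \|\sum_{g\in\cF}\pi_{g^{-1}}\|$ gives
$$\sum_{g\in\cF}\int_X\sqrt{\om(g,x)}\,d\mu(x) = \lim_n\Bigl\langle\Bigl(\sum_{g\in\cF}\pi_{g^{-1}}\Bigr)\Xi_n,\Xi_n\Bigr\rangle \leq \Bigl\|\sum_{g\in\cF}\pi_{g^{-1}}\Bigr\| = \Bigl\|\sum_{g\in\cF}\lambda_g\Bigr\| \; ,$$
the desired contradiction.

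I expect the genuinely structural input to be the identification $\pi \cong 1 \ot \lambda$ via Fell's absorption principle; everything else is bookkeeping. The two points that need care are getting the powers of $\om$ right so that $\pi$ is truly unitary and equal to $U \ot \lambda$, and tracking the inversion $g \mapsto g^{-1}$ so that the matrix coefficients produce $\int_X\sqrt{\om(g,\cdot)}\,d\mu$ (and not $\int_X\sqrt{\om(g^{-1},\cdot)}\,d\mu$), which is what is needed to match the form of the hypothesis.
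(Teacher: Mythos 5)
Your proof is correct and takes essentially the same route as the paper's: the same unitary representation $\pi$ on $L^2(X;\ell^2(G))$, the same evaluation of its diagonal matrix coefficients against the almost invariant vectors $\xi_n$ (with $P = \sqrt{\om(g,\cdot)} \in L^1(X,\mu)$), and the same norm comparison yielding the contradiction. The only cosmetic difference is that the paper identifies $\pi$ as a multiple of the regular representation directly, by observing that the subspaces $\pi(g)L^2(X,\C\delta_e)$, $g \in G$, are mutually orthogonal and densely span, whereas you factor $\pi = U \ot \lambda$ and invoke Fell's absorption principle -- two phrasings of the same fact.
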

\begin{proof}
Assume that $G \actson (X,\mu)$ is amenable in the sense of Zimmer and fix a finite subset $\cF \subset G$. Since $G \actson (X,\mu)$ is amenable, we can take a sequence $\xi_n \in L^\infty(X,\ell^2(G))$ such that $\|\xi_n(x)\|_2 = 1$ for a.e.\ $x \in X$ and
$$\lim_n \int_X \langle \lambda_g \xi_n(g^{-1} \cdot x) , \xi_n(x)\rangle \, H(x) \, d\mu(x) = \int_X  H(x) \, d\mu(x) \quad\text{for every $H \in L^1(X,\mu)$ and $g \in G$.}$$
Define the Hilbert space $\cK = L^2(X,\ell^2(G))$ and the unitary representation
$$\pi : G \recht \cU(\cK) : (\pi(g)\xi)(x) = \sqrt{\om(g^{-1},x)} \, \lambda_g \xi(g^{-1} \cdot x) \; .$$
We view $\xi_n$ as a sequence of unit vectors in $\cK$ and find that
$$\lim_n \langle \pi(g) \xi_n , \xi_n \rangle = \lim_n \langle \xi_n, \pi(g^{-1}) \xi_n \rangle = \int_X \sqrt{\om(g,x)} \, d\mu(x) \; .$$
It follows that
$$\sum_{g \in \cF} \int_X \sqrt{\om(g,x)} \, d\mu(x) \leq \Bigl\| \sum_{g \in \cF} \pi(g) \Bigr\| \; .$$
Defining the closed subspace $\cK_0 \subset \cK$ given by $\cK_0 = L^2(X,\C \delta_e)$, we see that the subspaces $\pi(g) \cK_0$, $g \in G$, are mutually orthogonal and that these subspaces densely span $\cK$. Therefore, $\pi$ is unitarily equivalent with a multiple of the regular representation of $G$. Therefore,
$$\Bigl\| \sum_{g \in \cF} \pi(g) \Bigr\| = \Bigl\| \sum_{g \in \cF} \lambda_g \Bigr\|$$
and the proposition is proved.
\end{proof}

\begin{lemma}\label{lem.lower-est-cocycle}
Let $G \actson I$ be a free action of the countable group $G$ on the countable set $I$ and let $F : I \recht (0,1)$ be a function satisfying \eqref{eq.conds-F} with $\delta = 1/3$. Denote by $G \actson (X,\mu)$ the associated Bernoulli action, by $\om : G \times X \recht (0,+\infty)$ its Radon-Nikodym cocycle and by $c : G \recht \ell^2(I)$ the associated $1$-cocycle as in \eqref{eq.one-cocycle-c}. Then,
\begin{equation}\label{eq.lower-est}
\int_X \sqrt{\om(g,x)} \, d\mu(x) \geq \exp\Bigl(- \frac{3}{5} \|c_g\|_2^2 \Bigr) \quad\text{for all $g \in G$.}
\end{equation}
\end{lemma}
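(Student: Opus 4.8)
The plan is to turn the integral into an infinite product of one-coordinate Hellinger affinities and then to establish a sharp per-coordinate inequality carrying the constant $3/5$.

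\textbf{Step 1 (Factorization).} By \eqref{eq.RN-conv-ae} we have $\om(g,x)=\prod_{i\in I}\mu_{g\cdot i}(x_i)/\mu_i(x_i)$, so that $\sqrt{\om(g,\cdot)}$ is a product over $i\in I$ of nonnegative functions, each depending only on the single coordinate $x_i$. Since $\mu=\prod_i\mu_i$ makes these coordinates independent, Tonelli's theorem yields the \emph{exact} equality
\[
\int_X\sqrt{\om(g,x)}\,d\mu(x)=\prod_{i\in I}h\bigl(F(g\cdot i),F(i)\bigr),\qquad h(a,b):=\sqrt{ab}+\sqrt{(1-a)(1-b)} .
\]
I stress that, unlike the upper bound \eqref{eq.estim-sqrt-om}, here I cannot merely invoke Fatou's lemma, which points the wrong way for a lower bound; it is the independence of the coordinates that gives genuine equality. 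Reindexing $i\mapsto g\cdot i$ rewrites $\|c_g\|_2^2=\sum_i(F(i)-F(g^{-1}\cdot i))^2$ as $\sum_i(F(g\cdot i)-F(i))^2$, matching the product above.

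\textbf{Step 2 (Reduction to one coordinate).} It therefore suffices to prove the pointwise bound
\[
h(a,b)\ \ge\ \exp\!\Bigl(-\tfrac{3}{5}(a-b)^2\Bigr)\qquad\text{for all }a,b\in[\tfrac13,\tfrac23],
\]
since multiplying over $i\in I$ then gives $\int_X\sqrt{\om}\,d\mu\ge\exp(-\tfrac35\sum_i(F(g\cdot i)-F(i))^2)=\exp(-\tfrac35\|c_g\|_2^2)$. As a sanity check on the constant, the Taylor expansion of $h$ around $a=b$ has leading term $1-\frac{1}{8b(1-b)}(a-b)^2$, and $\frac{1}{8b(1-b)}$ is maximized on $[\tfrac13,\tfrac23]$ at the endpoints with value $9/16<3/5$; so the constant $3/5$ leaves room to absorb the higher-order corrections across the whole square.

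\textbf{Step 3 (The one-variable inequality).} The cleanest route is the substitution $a=\cos^2\theta$, $b=\cos^2\phi$ with $\theta,\phi\in(0,\tfrac\pi2)$, under which $h(a,b)=\cos(\theta-\phi)$ and $a-b=-\sin(\theta+\phi)\sin(\theta-\phi)$. Writing $2s=\arcsin(\tfrac13)$, the constraint $a,b\in[\tfrac13,\tfrac23]$ becomes $\theta,\phi\in[\tfrac\pi4-s,\tfrac\pi4+s]$; hence $u:=|\theta-\phi|\le 2s$, and for each fixed $u$ the admissible values of $\theta+\phi$ fill the interval of half-length $2s-u$ centered at $\tfrac\pi2$, so that $\sin^2(\theta+\phi)\ge\cos^2(2s-u)$. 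Since $(a-b)^2=\sin^2(\theta+\phi)\sin^2 u\ge\cos^2(2s-u)\sin^2 u$, the desired bound $-\log h\le\tfrac35(a-b)^2$ is implied by the single-variable estimate
\[
-\log\cos u\ \le\ \tfrac{3}{5}\,\cos^2(2s-u)\,\sin^2 u\qquad\text{for }u\in[0,2s],\ 2s=\arcsin(\tfrac13),
\]
a routine calculus verification: both sides vanish at $u=0$, and one compares derivatives, or checks the two endpoints $u=0$ and $u=2s$ together with monotonicity of the ratio in between.

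\textbf{Main obstacle.} The genuine content lies in Step 3, namely obtaining the \emph{sharp} constant $3/5$. The naive Hellinger-distance bound $1-h(a,b)\le\frac34(a-b)^2$ combined with $-\log(1-t)\le t/(1-t)$ only produces a constant near $9/11>3/5$, because it bounds the two terms of $1-h$ separately and discards the cancellation between $\sin^2(\theta+\phi)$ and $\sin^2(\theta-\phi)$. It is exactly the coupling exposed by the trigonometric substitution—a large gap $|\theta-\phi|$ forces $\theta+\phi$ near $\tfrac\pi2$, where $\sin^2(\theta+\phi)$ is largest—that allows the constant to drop to $3/5$.
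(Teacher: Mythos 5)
Your skeleton is essentially the paper's: the paper also expresses $\int_X\sqrt{\om(g,x)}\,d\mu(x)$ as the limit of the partial products $\prod_{k=1}^n\bigl(\sqrt{F(i_k)F(g\cdot i_k)}+\sqrt{(1-F(i_k))(1-F(g\cdot i_k))}\bigr)$ and then feeds a per-coordinate inequality carrying the constant $3/5$ into the product. The one genuine gap in your write-up is the justification of Step 1. Tonelli's theorem only handles finitely many coordinates; for the infinite product, equality of $\int_X\sqrt{\om(g,\cdot)}\,d\mu$ with $\prod_i h(F(g\cdot i),F(i))$ is \emph{not} a formal consequence of independence. Indeed, the partial product $\om_n(g,\cdot)$ of \eqref{eq.RN-conv-ae} is the conditional expectation of $\om(g,\cdot)$ onto the first $n$ coordinates, so conditional Jensen gives $\int_X\sqrt{\om(g,x)}\,d\mu(x)\le\prod_{k\le n}h\bigl(F(g\cdot i_k),F(i_k)\bigr)$ for every $n$: the inequality that comes for free goes the \emph{wrong} way, just like Fatou. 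Moreover, for infinite products of measures the strict inequality $\int\sqrt{\om}\,d\mu<\prod_i h_i$ genuinely occurs (that is exactly the singular half of Kakutani's dichotomy, where the left side is $0$ and the right side can be positive). What saves you is what the paper invokes: by \cite{Ka48}, under \eqref{eq.conds-F} one has $\sqrt{\om_n(g,\cdot)}\recht\sqrt{\om(g,\cdot)}$ in $L^2(X,\mu)$ (equivalently, the $L^2$-bounded sequence $\sqrt{\om_n(g,\cdot)}$ is uniformly integrable), whence the integrals converge and the equality holds. Replace ``Tonelli'' by a citation of Kakutani's theorem and Step 1 is correct.

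Steps 2--3 are correct, and Step 3 is a genuinely different proof of the pointwise inequality from the paper's. The paper stays algebraic: on $[1/3,2/3]^2$ it uses $\sqrt{ab}+\sqrt{(1-a)(1-b)}\ge 1-\frac{9}{16}(a-b)^2$ together with $\log(1-t)\ge-\frac{16}{15}t$ for $0\le t\le\frac1{16}$, and $\frac{16}{15}\cdot\frac{9}{16}=\frac35$. So, contrary to your ``main obstacle'' paragraph, the constant $3/5$ is attainable without trigonometry: restricting to the square already improves the Hellinger constant from $3/4$ to $9/16$, and this improvement encodes precisely the cancellation you describe, namely that $(\sqrt a+\sqrt b)^2$ and $(\sqrt{1-a}+\sqrt{1-b})^2$ cannot both be small. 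Your substitution $a=\cos^2\theta$, $b=\cos^2\phi$, the identities $h=\cos(\theta-\phi)$ and $(a-b)^2=\sin^2(\theta+\phi)\sin^2(\theta-\phi)$, and the range bookkeeping are all correct. One warning about the final verification: the ratio $(-\log\cos u)\big/\bigl(\tfrac35\cos^2(2s-u)\sin^2u\bigr)$ is \emph{not} monotone on $[0,2s]$; it starts at $15/16$ as $u\recht 0^+$, dips to roughly $0.87$ near $u\approx 0.25$, and then increases again to roughly $0.88$ at $u=2s$. So the ``endpoints plus monotonicity of the ratio'' variant fails. The derivative comparison, however, works and is clean: after dividing by $\sin u$ it amounts to
\begin{equation*}
\cos u\,\cos(2s-u)\,\cos(2s-2u)\;\ge\;\tfrac56 \quad\text{on $[0,2s]$,}
\end{equation*}
and writing $w=\cos(2s-2u)\in[\cos 2s,1]$ one computes $\cos u\cos(2s-u)\cos(2s-2u)=\tfrac12\bigl(w^2+w\cos 2s\bigr)\ge\cos^2 2s=\tfrac89>\tfrac56$. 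With these two repairs your argument is a complete, correct alternative to the paper's proof, at the price of being somewhat longer than the paper's two elementary inequalities.
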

\begin{proof}
Let $I = \{i_1,i_2,\ldots\}$ be an enumeration of $I$. Define
$$\om_n : G \times X \recht (0,+\infty) : \om_n(g,x) = \prod_{k=1}^n \frac{\mu_{g\cdot i_k}(x_{i_k})}{\mu_{i_k}(x_{i_k})} \; .$$
Fix $g \in G$. By \cite{Ka48}, we know that $\om_n(g,x) \recht \om(g,x)$ for a.e.\ $x \in X$ and that $\sqrt{\om_n(g,\cdot)} \recht \sqrt{\om(g,\cdot)}$ in $L^2(X,\mu)$. Therefore,
\begin{equation}\label{eq.stapje}
\int_X \sqrt{\om(g,x)} \, d\mu(x) = \lim_n \prod_{k=1}^n \bigl( \sqrt{F(i_k) F(g \cdot i_k)} + \sqrt{(1-F(i_k))(1-F(g \cdot i_k))} \bigr) \; .
\end{equation}
For all $1/3 \leq a,b \leq 2/3$, we have that
$$\sqrt{ab} + \sqrt{(1-a)(1-b)} \geq 1 - \frac{9}{16}(b-a)^2 \; .$$
For every $0 \leq t \leq 1/16$, we have that $\log(1-t) \geq - (16/15)t$. Since $\frac{9}{16}(b-a)^2$ lies between $0$ and $1/16$, we get that
$$\log(\sqrt{ab} + \sqrt{(1-a)(1-b)}) \geq -\frac{3}{5}(b-a)^2 \; .$$
It then follows from \eqref{eq.stapje} that
$$\int_X \sqrt{\om(g,x)} \, d\mu(x) \geq \exp\Bigl(- \frac{3}{5} \sum_{i \in I} (F(i) - F(g \cdot i))^2 \Bigr) = \exp\bigl(- \frac{3}{5} \|c_g\|_2^2 \bigr) \; .$$
So \eqref{eq.lower-est} holds and the lemma is proved.
\end{proof}

\section{Amenable groups}

\begin{theorem}\label{thm.amenable-III1}
Let $G$ be an amenable countable infinite group. Then $G$ admits a nonsingular Bernoulli action $G \actson (X,\mu) = \prod_{g \in G} (X_0,\mu_g)$ that is essentially free, ergodic and of type III$_1$ and that has a weakly mixing Maharam extension.

In the following two cases, we can choose as a base space $X_0$ the two point set $\{0,1\}$:
\begin{itemlist}
\item when $G$ has at least one element of infinite order;
\item when $G$ admits an infinite subgroup of infinite index.
\end{itemlist}
\end{theorem}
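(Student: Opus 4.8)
The plan is to follow the template of Lemma~\ref{lem.construction}, but with one essential change forced by amenability: by \cite[Theorem 2.5]{PT10} no non-inner $1$-cocycle can vanish on an infinite subgroup, so the cocycle driving the construction must be \emph{proper}, and the whole difficulty becomes keeping it slow enough to stay conservative while still producing a weakly mixing Maharam extension. First I would invoke Proposition~\ref{prop.cocycle-small-growth} to fix a proper, non-inner $1$-cocycle $b : G \recht \ell^2(G)$, realized by a function $H : G \recht \C$ with $H(e)=0$ and $b_g = H - g\cdot H$, whose growth is slow enough that $\sum_{g\in G}\exp(-\kappa\|b_g\|_2^2)=+\infty$ for every $\kappa>0$. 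After replacing $b$ by $ib$ if needed and arranging that $\real H$ takes at least three values, I would truncate: set $K=\psi\circ\real H$ for a clamping $\psi$ of $\R$ into some $[-\kappa_1,\kappa_2]$, and define $\mu_g(0)=(1+\exp(\eps_1 K(g)))^{-1}\in[1/3,2/3]$ exactly as in Lemma~\ref{lem.construction}, choosing $\kappa_1,\kappa_2$ so that the range of $K$ generates a dense subgroup of $\R$. The associated $1$-cocycle $c$ then satisfies $\|c_g\|_2\leq\eps_1\|b_g\|_2$, so the Bernoulli action is nonsingular and essentially free, and for $\eps_1$ small one still has $\sum_{g}\exp(-16\|c_g\|_2^2)=+\infty$; Proposition~\ref{prop.conservative}(1) then makes $G\actson(X,\mu)$ \textbf{conservative}. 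Since $G$ is amenable the action is automatically amenable in the sense of Zimmer, so, unlike in Lemma~\ref{lem.construction}, I would not need Proposition~\ref{prop.criterion-nonamenable} or Lemma~\ref{lem.lower-est-cocycle}.

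Next I would prove that the Maharam extension is weakly mixing, i.e.\ that for every pmp action $G\actson(Y,\eta)$ one has $L^\infty(Y\times X\times\R)^G = L^\infty(Y)^G\ot 1\ot 1$; this is precisely weak mixing of the Maharam extension and yields ergodicity and type~III$_1$ at once. I would split it into two stages. \emph{Stage~A:} every $G$-invariant $Q\in L^\infty(Y\times X\times\R)$ is independent of the $X$-coordinate, so $Q=P(y,s)$. \emph{Stage~B:} such a $P$ is independent of $s$. Stage~B is the routine half and adapts the end of the proof of Lemma~\ref{lem.construction}: invariance forces $P(y,s+\log\om(g,x)-\log\om(g,x'))=P(y,s)$, and the one-coordinate-change factor map $\pi$ identifies the essential range of $(x,x')\mapsto\log\om(g,x)-\log\om(g,x')$ with the numbers $\eps_1(K(gk)-K(k))$; by the design of $K$ these generate a dense subgroup of $\R$, forcing $P\in L^\infty(Y)^G\ot 1$.

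The \textbf{main obstacle is Stage~A}. In Lemma~\ref{lem.construction} this step was free, because the restriction to the subgroup on which the cocycle vanished was a \emph{mixing pmp} Bernoulli action and weak mixing of that action eliminated the $X$-dependence. For amenable $G$ the cocycle is proper, so no such pmp subgroup factor exists; in the extreme cases (e.g.\ the quasi-cyclic groups $\Z(p^\infty)$, which have no element of infinite order and no infinite subgroup of infinite index) there is no exploitable subgroup at all. I would therefore replace the subgroup argument by a direct \emph{asymptotic independence} argument: approximate $Q$ in $L^2$ by a function of finitely many coordinates $S\subset G$; use conservativeness (Proposition~\ref{prop.conservative}) to produce group elements $g$ with $gS\cap S=\emptyset$ and with $\om(g,\cdot)$ controlled near the modular values; and exploit that coordinate blocks in $S$ and $gS$ are independent under the product measure to show that the $X$-dependent Fourier modes of $Q$ must vanish. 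Making this quantitative in the nonsingular rather than pmp setting~--~controlling the Radon--Nikodym weights while one pushes $S$ to infinity~--~is the delicate point, and it is exactly where the slow-growth estimate is used a second time.

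Finally, the restriction to a two-point base in the two special cases is a statement about the \emph{realizability} of the construction, not about a different ergodicity argument: Stages~A and~B are run in exactly the same way. With $X_0=\{0,1\}$ the ratio-set values produced in Stage~B are governed by the single real function $\eps_1 K$, so to make them generate a dense subgroup of $\R$ one needs a real-valued slow-growth cocycle whose truncation has dense-generating range. When $G$ has an element of infinite order, the copy of $\Z$ supplies such a one-dimensional cocycle directly; when $G$ has an infinite subgroup of infinite index, the infinitely many cosets provide enough room to spread the values of $K$ while keeping $\sum_{g}\exp(-\kappa\|b_g\|_2^2)=+\infty$. In the general amenable case neither device is available, and one pays for this by allowing a larger base space $X_0$, whose extra degrees of freedom let one realize simultaneously the slow-growth, density and essential-freeness requirements.
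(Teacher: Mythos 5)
Your proposal correctly identifies the two ingredients the paper also uses~--~slow-growth proper cocycles from Proposition \ref{prop.cocycle-small-growth} and the conservativeness criterion of Proposition \ref{prop.conservative}~--~and your Stage~B is indeed the routine half. But Stage~A, which you yourself flag as the main obstacle, \emph{is} the theorem, and what you offer for it is not a proof: an approximation-by-finitely-many-coordinates plus ``asymptotic independence'' sketch, with the control of the Radon--Nikodym weights explicitly left open. No such general statement (``conservative $\Rightarrow$ invariant functions do not depend on $X$'') is proved in the paper, and it cannot be extracted from your generic truncated cocycle $K=\psi\circ\real H$: the construction must be built with special structure so that Stage~A becomes provable. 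Moreover, your premise for abandoning the subgroup strategy is mistaken. In the case of an infinite subgroup $\Lambda<G$ of infinite index, the paper chooses the F{\o}lner sets $A_n$ so that the translates $\Lambda A_n$ are pairwise \emph{disjoint}; since $F$ is supported on $\bigcup_n A_n$, the function $F$ is constant ($=1/2$) on each $\Lambda$-orbit outside a finite set. After an orbit-wise change to an equivalent measure, the restriction of $\Lambda$ to each orbit-block is a mixing pmp Bernoulli action, so each orbit is ``$1/2$-inessential'' by Proposition \ref{prop.inessential-orbit} (conservativeness plus \cite[Theorem 2.3]{SW81}); then Proposition \ref{prop.inessential-type-III1}~--~whose proof uses martingale convergence and, crucially, ergodicity of the tail-equivalence-relation cocycle from \cite[Proposition 1.5]{DL16}~--~glues the orbit-wise information into ergodicity, type III$_1$ and weak mixing of the Maharam extension. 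So the exploitable pmp structure you claim does not exist is exactly what the paper engineers, not by making the cocycle vanish on $\Lambda$ (impossible, as you note), but by making its implementing function $\Lambda$-invariant up to finitely many points per orbit.

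There are two further gaps. First, the element-of-infinite-order case splits: if the copy of $\Z$ has infinite index one is in the previous case, but if it has \emph{finite} index the orbit-wise criterion above is useless (a $\Z$-orbit is cofinite), and the paper needs a different inessentiality criterion, Proposition \ref{prop.inessential-Z}, based on \cite[Lemma 4.3]{ST94}, applied to the one-sided function $F_0(n)=1/2+1/\sqrt{n\log n}$ for $n\geq 4$ and $F_0(n)=1/2$ for $n\leq 3$ (this is Proposition \ref{prop.Bernoulli-Z-generic}); your sketch does not address this case at all. Second, your account of the general amenable case (``a larger base space whose extra degrees of freedom let one realize the requirements'') misses the actual mechanism: one applies the infinite-index case to the enlarged group $G\times\Z$ with subgroup $G\times\{0\}$, which \emph{always} has infinite index, obtaining a $\{0,1\}$-base Bernoulli action of $G\times\Z$ whose restriction to $G\times\{0\}$ satisfies all conclusions; this restriction is then re-read as a Bernoulli action of $G$ with base space $X_0=\prod_{n\in\Z}\{0,1\}$. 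Without this group-enlargement trick there is no argument at all for groups such as the Pr\"{u}fer $p$-groups, which have neither an element of infinite order nor an infinite subgroup of infinite index.
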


The only amenable groups $G$ that do not satisfy any of the extra assumptions in Theorem \ref{thm.amenable-III1} are the amenable torsion groups with the property that every subgroup is either finite or of finite index. While it is unknown whether there are finitely generated such groups, the locally finite Pr\"{u}fer $p$-groups, for $p$ prime, given as the direct limit of the finite groups $\Z / p^n \Z$, have the property that every proper subgroup is finite. We do not know whether these groups admit a nonsingular Bernoulli action of type III with base space $X_0 = \{0,1\}$.

In \cite[Theorem 7]{Ko10}, it is proven that there exist nonsingular Bernoulli shifts $T$ that are ergodic, of type III$_1$ and \emph{power weakly mixing} in the sense that all transformations $T^{a_1} \times \cdots \times T^{a_k}$ remain ergodic. Our proof of Theorem \ref{thm.amenable-III1} also gives the following concrete examples.

\begin{corollary}\label{cor.explicit-Z}
Let $0 < \lambda < 1$ and put $n_0 = \lceil (1-\lambda)^{-2} \rceil$. Define for every $n \in \Z$, the probability measure $\mu_n$ on $\{0,1\}$ given by
$$\mu_n(0) = \begin{cases} \lambda + \frac{1}{\sqrt{n \log(n)}} &\;\;\text{if $n \geq n_0$,}\\
\lambda &\;\;\text{if $n < n_0$.}\end{cases}$$
The associated Bernoulli shift $T$ on $(X,\mu) = \prod_{n \in \Z} (\{0,1\},\mu_n)$ is essentially free, ergodic, of type III$_1$ and with weakly mixing Maharam extension. Moreover, for all $k \geq 1$ and $a_1,\ldots,a_k \in \Z \setminus \{0\}$, the nonsingular transformation
$$T^{a_1} \times \cdots \times T^{a_k} : X^k \recht X^k : (x_1,\ldots,x_k) \mapsto (T^{a_1}(x_1),\ldots,T^{a_k}(x_k))$$
remains ergodic, of type III$_1$ and with weakly mixing Maharam extension.
\end{corollary}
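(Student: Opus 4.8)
The statement follows from the proof of Theorem~\ref{thm.amenable-III1} specialised to $G = \Z$ with the explicit function $F_0(n) := \mu_n(0)$, together with a reinterpretation of the powers $T^{a_1}\times\cdots\times T^{a_k}$ as a single generalised Bernoulli action. First I would record the standing assumptions. Since $\lambda \le F_0(n) \le \lambda + (n_0\log n_0)^{-1/2} < 1$, the function $F_0$ is bounded away from $0$ and $1$, so the first line of \eqref{eq.conds-F} holds with some $\delta > 0$, and \eqref{eq.nonatomic} holds trivially. Everything else is governed by the growth of the associated $1$-cocycle $c : \Z \recht \ell^2(\Z)$, $c_m(k) = F_0(k) - F_0(k-m)$; in particular nonsingularity is the statement $\|c_m\|_2^2 < \infty$.

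The heart of the argument is the estimate $\|c_m\|_2^2 = \Theta(\log\log|m|)$. Writing $g(n) = (n\log n)^{-1/2}$ for $n \ge n_0$, I would split $\sum_k (F_0(k)-F_0(k-m))^2$ according to whether $k$ and $k-m$ lie above or below $n_0$. The boundary band $n_0 \le k < n_0 + m$, where $F_0(k) - F_0(k-m) = g(k)$, contributes $\sum_{k=n_0}^{n_0+m-1}(k\log k)^{-1} \sim \log\log m$, and the tail $k \ge n_0 + m$ is controlled to the same order by the telescoping bound $\sum_{u \ge n_0}(g(u)-g(u+m))^2 \le \sum_{u\ge n_0}(g(u)^2 - g(u+m)^2) = \sum_{u=n_0}^{n_0+m-1}g(u)^2 = O(\log\log m)$. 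Thus $\|c_m\|_2^2 \le C\log\log|m|$ for large $|m|$, whence $\exp(-\kappa\|c_m\|_2^2) \ge (\log|m|)^{-\kappa C}$ for every $\kappa > 0$, and $\sum_m \exp(-\kappa\|c_m\|_2^2) = +\infty$ since $\sum_m(\log|m|)^{-s}$ diverges for every $s$. Fixing $\kappa > \kappa_0 = \delta^{-2} + \delta^{-1}(1-\delta)^{-2}$, Proposition~\ref{prop.conservative}(1) gives conservativeness; the weight $(n\log n)^{-1/2}$ is chosen precisely so that this double-logarithmic growth forces divergence for every $\kappa$.

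To upgrade conservativeness to ergodicity, type III$_1$ and a weakly mixing Maharam extension, I would invoke the argument in the proof of Theorem~\ref{thm.amenable-III1}; the only input specific to this $F_0$ that must be checked is that the essential values of the Radon--Nikodym cocycle generate a dense subgroup of $\R$. As in the proof of Lemma~\ref{lem.construction}, flipping a single coordinate shows that each difference $\log(\mu_{m+k}(1)/\mu_{m+k}(0)) - \log(\mu_k(1)/\mu_k(0))$ lies in the group generated by $\bigcup_g R_g$. Taking $k < n_0$ and $m+k = n \ge n_0$ produces the value $\log\frac{1-F_0(n)}{F_0(n)} - \log\frac{1-\lambda}{\lambda}$, which is nonzero and tends to $0$ as $n \recht \infty$. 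A subgroup of $\R$ containing arbitrarily small nonzero elements is dense, so the type is III$_1$ (ruling out III$_\lambda$ and III$_0$) and the Maharam extension is weakly mixing.

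For the power statement I would reinterpret $T^{a_1}\times\cdots\times T^{a_k}$ as the generator of a single generalised Bernoulli action in the framework of the preliminaries: put $I = \Z \times \{1,\dots,k\}$ with the $\Z$-action $t\cdot(n,j) = (n + t a_j, j)$, which is free since each $a_j \ne 0$, and $F(n,j) = F_0(n)$, so that $\Z \actson \{0,1\}^I$ is exactly the product transformation. Its cocycle satisfies $\|c_t\|_2^2 = \sum_{j=1}^k \|c^{(0)}_{t a_j}\|_2^2 = \Theta(k\log\log|t|)$, where $c^{(0)}$ is the cocycle of the second paragraph. This robustness of the double-logarithmic growth under products is the crucial point: the bound $\|c_t\|_2^2 \le Ck\log\log|t|$ still forces $\sum_t\exp(-\kappa\|c_t\|_2^2) \ge \sum_t(\log|t|)^{-\kappa Ck} = +\infty$ for every $k$ and every $\kappa$, so Proposition~\ref{prop.conservative}(1) again yields conservativeness, and the single-coordinate computation again shows the essential values accumulate at $0$. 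Applying the framework version of the proof of Theorem~\ref{thm.amenable-III1} to $\Z \actson \{0,1\}^I$ then gives that the product is ergodic, of type III$_1$ and with weakly mixing Maharam extension. I expect the only real difficulty to be bookkeeping: the ergodicity and weak mixing of the Maharam extension are already furnished by the proof of Theorem~\ref{thm.amenable-III1}, and the genuinely new work is the explicit growth estimate together with the observation that it survives, up to the harmless factor $k$, on passing to the $k$-fold product.
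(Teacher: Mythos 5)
Your conservativeness half is correct and coincides with the paper's: the estimate $\|c_m\|_2^2 = \Theta(\log\log|m|)$ is exactly Lemma~\ref{lem.translate-function} applied to $a_n = ((n_0+n)\log(n_0+n))^{-1/2}$ (your telescoping bound is the same computation), and viewing $T^{a_1}\times\cdots\times T^{a_k}$ as the Bernoulli action of $\Z$ on $I = \Z\times\{1,\dots,k\}$ with $t\cdot(n,j) = (n+ta_j,j)$ and $F(n,j)=F_0(n)$ is precisely how the paper's proof of the corollary begins.

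The gap is in the upgrade from conservative to ergodic, type III$_1$, weakly mixing Maharam extension. You assert that the only $F_0$-specific input beyond conservativeness is that the Radon--Nikodym values generate a dense subgroup of $\R$, importing the density argument from Lemma~\ref{lem.construction}. But in that lemma the density argument is applied only \emph{after} one knows that every invariant function $Q\in L^\infty(Y\times X\times\R)$ has the form $P(y,s)$, i.e.\ is independent of the Bernoulli coordinate $x$; there this reduction is free because the cocycle vanishes on an infinite subgroup $\Lambda$, so $\Lambda$ acts as a mixing pmp Bernoulli action and \cite[Theorem 2.3]{SW81} applies. For $G=\Z$ no such subgroup exists (every non-inner cocycle $\Z\recht\ell^2(\Z)$ is proper by \cite[Theorem 2.5]{PT10}), and producing this reduction is the hard part of the paper: it is done through the $\lambda$-inessential half-orbits of Proposition~\ref{prop.inessential-Z} (resting on \cite[Lemma 4.3]{ST94} and requiring conservativeness \emph{together with} the one-sided condition that $F\equiv\lambda$ on a half-orbit), followed by Proposition~\ref{prop.inessential-type-III1}, where ergodicity and type III$_1$ come not from a density-of-values computation but from the ergodicity of the tail-relation cocycle $\al$, quoted from \cite[Proposition 1.5]{DL16}, which needs $\sum_n (F_0(n)-\lambda)^2 = \sum_{n\ge n_0}(n\log n)^{-1} = +\infty$. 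Conservativeness plus a dense group of Radon--Nikodym values does not by itself imply ergodicity, so as written your argument proves neither ergodicity nor the type nor weak mixing. The repair is cheap and is what the paper does: check hypotheses \ref{as.one}--\ref{as.four} of Proposition~\ref{prop.Bernoulli-Z-generic} for $F$ on $I$. Your growth estimate gives \ref{as.two} and \ref{as.three}; the two conditions you skipped, \ref{as.one} (divergence of $\sum_i(F(i)-\lambda)^2$) and \ref{as.four} (eventual constancy $\equiv\lambda$ in one direction along each orbit, which holds since $F_0(n)=\lambda$ for $n<n_0$ and each $a_j\neq 0$), are exactly the ones that feed the tail machinery doing the real work.
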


As another application of our methods, we give the following concrete example of an ergodic type III$_1$ Bernoulli shift that is not power weakly mixing. As far as we know, such examples were not given before.

\begin{corollary}\label{cor.explicit-Z-power-dissipative}
Define for every $n \in \Z$, the probability measure $\mu_n$ on $\{0,1\}$ given by
$$\mu_n(0) = \begin{cases} \frac{1}{2} + \frac{1}{6 \sqrt{n}} &\;\;\text{if $n \geq 1$,}\\
\frac{1}{2} &\;\;\text{if $n \leq 0$.}\end{cases}$$
The associated Bernoulli shift $T$ on $(X,\mu) = \prod_{n \in \Z} (\{0,1\},\mu_n)$ is essentially free, ergodic, of type III$_1$ and with weakly mixing Maharam extension, but for $m$ large enough (e.g.\ $m \geq 73$), the $m$-th power transformation
$$T \times \cdots \times T : X^m \recht X^m : (x_1,\ldots,x_m) \mapsto (T(x_1),\ldots,T(x_m))$$
is dissipative.
\end{corollary}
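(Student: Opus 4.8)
The plan is to analyze this as a Bernoulli action of $G = \Z$ where the base measures satisfy $1/2 \leq \mu_n(0) \leq 1/2 + 1/6$, so $\delta = 1/3 \leq \mu_n(0) \leq 2/3$ and the framework of Proposition \ref{prop.conservative} and Lemma \ref{lem.lower-est-cocycle} applies directly with $I = \Z$ and $F(n) = \mu_n(0)$. The associated $1$-cocycle $c : \Z \recht \ell^2(\Z)$ has norm $\|c_m\|_2^2 = \sum_{n} (F(n+m)-F(n))^2$. First I would estimate this norm: since $F(n)-\lambda$ behaves like $\frac{1}{6\sqrt{n}}$ for $n \geq 1$ and is constant ($=1/2$) for $n \leq 0$, the increments $F(n+m)-F(n)$ are $O(m/n^{3/2})$ for large $n \geq 1$ (from the derivative of $1/\sqrt{n}$), which is square-summable, while the dominant contribution comes from the transition region near $n = 0$, where the difference is roughly $\frac{1}{6\sqrt{n}}$ for $1 \leq n \leq m$. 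Summing, $\sum_{n=1}^{m} \frac{1}{36 n} \sim \frac{1}{36}\log m$, so I expect $\|c_m\|_2^2 = \frac{1}{36}\log|m| + O(1)$ as $|m| \to \infty$. The precise constant $1/36$ is what matters for the threshold calculations below, so this asymptotic must be pinned down carefully.

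To prove ergodicity and type III$_1$ with weakly mixing Maharam extension, I would invoke the mechanism developed in the proof of Lemma \ref{lem.construction}: the cocycle does not vanish on any infinite subgroup (it is proper, as $\|c_m\|_2 \to \infty$), and the essential range of $\log\om$ generates a dense subgroup of $\R$ because the values $\eps_1(K(m+k)-K(k))$—here the analogue being the log-odds increments $\log\frac{\mu_{n+m}(1)/\mu_{n+m}(0)}{\mu_n(1)/\mu_n(0)}$—take a dense set of values (the square-root tail is not eventually valued in any lattice). Conservativeness of $\Z \actson (X,\mu)$ follows from Proposition \ref{prop.conservative}(1): with $\|c_m\|_2^2 \approx \frac{1}{36}\log|m|$, the series $\sum_m \exp(-\kappa\|c_m\|_2^2) \approx \sum_m |m|^{-\kappa/36}$ diverges precisely when $\kappa/36 \leq 1$, and since we need $\kappa > 16$ (the stated threshold for $\delta=1/3$) while $\kappa = 36 > 16$ is admissible, conservativeness holds. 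Combined with the dense-range argument, this yields ergodicity and type III$_1$.

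The interesting half is the dissipativity of the $m$-fold power $T \times \cdots \times T$ for $m$ large. The key observation is that the product action $\Z \actson (X^m,\mu^m)$ is again a Bernoulli action of $\Z$ (on the disjoint union of $m$ copies of $\Z$), whose cocycle $c^{(m)}$ satisfies $\|c^{(m)}_j\|_2^2 = m\,\|c_j\|_2^2 \approx \frac{m}{36}\log|j|$. I would then apply Proposition \ref{prop.conservative}(2): dissipativity holds once $\sum_j \exp(-\frac{1}{2}\|c^{(m)}_j\|_2^2) < \infty$, i.e.\ once $\sum_j |j|^{-m/72} < \infty$, which requires $m/72 > 1$, i.e.\ $m > 72$, hence $m \geq 73$. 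This is exactly the stated threshold, which reassures me that the constant $1/36$ in the cocycle norm is correct. The main obstacle, and where I would spend the most care, is making the asymptotic $\|c_m\|_2^2 = \frac{1}{36}\log|m| + O(1)$ rigorous with explicit enough error control that both the conservativeness inequality (needing the divergence at $\kappa=36$, equivalently $\kappa/36 = 1$) and the dissipativeness inequality (needing convergence at $m=73$, equivalently $m/72 > 1$) survive; since both thresholds sit right at the boundary of the respective comparison series, I would need the error term to be genuinely $O(1)$ rather than $o(\log)$, and I would verify that the borderline divergent case $\sum 1/|m|$ for conservativeness is handled by the strict inequality $\kappa > 16$ rather than by the marginal exponent, so that the $\log$-correction in $\mu_n(0)$ present in Corollary \ref{cor.explicit-Z} is not needed here.
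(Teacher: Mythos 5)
Your overall architecture (two-sided control of $\|c_k\|_2$, Proposition \ref{prop.conservative} for both halves, and the observation that the $m$-fold power is the Bernoulli action over $m$ disjoint copies of $\Z$ with $\|c^{(m)}_k\|_2^2 = m\,\|c_k\|_2^2$) is the same as the paper's, but two of your steps fail as written.

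First, the asymptotic $\|c_m\|_2^2 = \tfrac{1}{36}\log|m| + O(1)$ is false, and not in a direction you can absorb. The transition region $1 \leq n \leq m$ does contribute $\tfrac{1}{36}\log m + O(1)$, but the region $n > m$ contributes a \emph{second} term of the same order, not $O(1)$: writing $n = m+j$, the increment is $\tfrac16\bigl(\tfrac{1}{\sqrt{m+j}} - \tfrac{1}{\sqrt j}\bigr)$, which for $1 \leq j \ll m$ has size about $\tfrac{1}{6\sqrt j}$; your derivative bound $O(m/n^{3/2})$ is invalid there, because $F'$ changes by orders of magnitude between $n-m$ and $n$. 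Summing $\tfrac{1}{36j}$ over $j \leq m$ produces another $\tfrac{1}{36}\log m$, so in truth $\|c_m\|_2^2 = \tfrac{1}{18}\log|m| + O(1)$. Consequently your conservativeness check collapses: with $\kappa = 36$ one gets $\sum_m \exp(-36\|c_m\|_2^2) \asymp \sum_m |m|^{-2} < \infty$, so Proposition \ref{prop.conservative}(1) gives nothing at that $\kappa$. The repair is what the paper does via Lemma \ref{lem.translate-function}, which yields exactly the two-sided bound $\tfrac{1}{36}\sum_{n=1}^{|k|}\tfrac1n \leq \|c_k\|_2^2 \leq \tfrac{2}{36}\sum_{n=1}^{|k|}\tfrac1n$: run conservativeness at $\kappa = 16$ (admissible since $16 > \delta^{-2}+\delta^{-1}(1-\delta)^{-2} = 63/4$ for $\delta = 1/3$) against the \emph{upper} bound, giving $\exp(-16\|c_k\|_2^2) \geq e^{-8/9}|k|^{-8/9}$, a divergent series. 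Note that no sharp asymptotic can serve both purposes with the constant $1/36$: the conservativeness side needs an upper bound (true constant $1/18$), the dissipativity side a lower bound (constant $1/36$). Your dissipativity half survives untouched, since it uses only the lower bound $\|c_k\|_2^2 \geq \tfrac{1}{36}\log(1+|k|)$, which is correct, and $m \geq 73$ gives exponent $m/72 > 1$; the agreement of $73$ with the stated threshold reflects the paper's use of this lower bound, not a sharp constant $1/36$.

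Second, your route to ergodicity, type III$_1$ and weak mixing of the Maharam extension does not work. The mechanism of Lemma \ref{lem.construction} requires a non-inner $1$-cocycle vanishing on an infinite subgroup $\Lambda < G$, so that $\Lambda \actson (X,\mu)$ is a pmp Bernoulli action and every invariant function is first forced into $L^\infty(Y \times \R)$, and only \emph{then} does the dense-range argument kill the $\R$-dependence. For $G = \Z$ no such subgroup exists: as you yourself note, the cocycle here is proper. Conservativity plus density of the essential range of the log Radon--Nikodym increments do not by themselves imply ergodicity. What the paper uses instead is Proposition \ref{prop.Bernoulli-Z-generic}, i.e.\ the inessential-subset machinery: Proposition \ref{prop.inessential-Z} (resting on \cite[Lemma 4.3]{ST94} and the fact that $F \equiv 1/2$ on a half-line of each orbit) shows that half-orbits are $1/2$-inessential once the action is conservative, and Proposition \ref{prop.inessential-type-III1} (resting on the ergodicity of the tail cocycle, \cite[Proposition 1.5]{DL16}, which needs $\lim_n F(n) = 1/2$ and $\sum_n (F(n)-\tfrac12)^2 = +\infty$) then yields ergodicity, type III$_1$ and weak mixing of the Maharam extension. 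All hypotheses of Proposition \ref{prop.Bernoulli-Z-generic} hold here (with $\lambda = 1/2$, $\delta = 1/3$, $\kappa = 16$), so the statement is recoverable, but by this mechanism, not the one you cite.
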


Theorem \ref{thm.amenable-III1} and its corollaries are proved in Sections \ref{sec.proof-amenable-III1}--\ref{sec.proof-last-cor}.

\subsection{\boldmath Determining the type: removing inessential subsets of $I$}\label{sec.reduction}

Fix a countable infinite group $\Lambda$ acting freely on a countable set $I$ and fix a function $F : I \recht (0,1)$ satisfying \eqref{eq.conds-F}. Define the probability measures $\mu_i$ on $\{0,1\}$ given by $\mu_i(0) = F(i)$. Denote by $\Lambda \actson (X,\mu) = \prod_{i \in I} (\{0,1\},\mu_i)$ the associated Bernoulli action with Radon-Nikodym cocycle $\om : \Lambda \times X \recht (0,+\infty)$ given by \eqref{eq.RN-conv-ae} and Maharam extension $\Lambda \actson (X \times \R,\mu \times \nu)$ given by \eqref{eq.maharam}. Fix an arbitrary pmp action $\Lambda \actson (Y,\eta)$ and consider the diagonal action $\Lambda \actson (Y \times X \times \R, \eta \times \mu \times \nu)$.

For every subset $J \subset I$, we consider $(X_J,\mu_J) = \prod_{j \in J} (\{0,1\},\mu_j)$. We denote by $x \mapsto x_J$ the natural measure preserving factor map $(X,\mu) \recht (X_J,\mu_J)$. Given $0 < \lambda < 1$, we denote by $\nu_\lambda$ the probability measure on $\{0,1\}$ given by $\nu_\lambda(0) = \lambda$. We also use the notation
\begin{equation}\label{eq.maps-vphi-i}
\vphi_{\lambda,i} : \{0,1\} \recht \R : \vphi_{\lambda,i}(x) = \log \frac{\mu_i(x)}{\nu_\lambda(x)} = \begin{cases} \log(F(i)) - \log(\lambda) &\;\;\text{if $x = 0$,} \\ \log(1-F(i)) - \log(1-\lambda) &\;\;\text{if $x=1$.}\end{cases}
\end{equation}

We introduce the following ad hoc terminology: given $0 < \lambda < 1$, we call a subset $J \subset I$ \emph{$\lambda$-inessential} if the following two conditions hold.
\begin{enumlist}
\item $\mu_j = \nu_\lambda$ for all but finitely many $j \in J$.
\item For every $\Lambda$-invariant $Q \in L^\infty(Y \times X \times \R)$, there exists a $P \in L^\infty(Y \times X_{I \setminus J} \times \R)$ such that $\|P\|_\infty \leq \|Q\|_\infty$ and
$$Q(y,x,s) = P\Bigl(y,x_{I \setminus J} \; , \; s - \sum_{j \in J} \vphi_{\lambda,j}(x_j) \Bigr) \quad\text{for a.e.\ $(y,x,s) \in Y \times X \times \R$} \; .$$
\end{enumlist}
Note that the sum over $j \in J$ is actually a finite sum since $\vphi_{\lambda,j}$ is the zero map for all but finitely many $j \in J$. The terminology ``inessential'' is motivated by the fact that these subsets ``do not contribute'' to the type of the action $\Lambda \actson (X,\mu)$.

Note that if we assume that condition~1 holds, then condition~2 is equivalent with the following: denoting by $\mu' \sim \mu$ the measure given by $\mu'_i = \mu_i$ for all $i \in I \setminus J$ and $\mu'_j = \nu_\lambda$ for all $j \in J$, every $\Lambda$-invariant function in $L^\infty(Y \times X \times \R)$ w.r.t.\ the diagonal action of $\Lambda \actson (Y,\eta)$ and the Maharam extension for $\Lambda \actson (X,\mu')$ belongs to $L^\infty(Y \times X_{I \setminus J} \times \R)$. Using this characterization, it follows that the union of two inessential subsets is again inessential.

We provide two criteria for subsets $J \subset I$ to be inessential.

\begin{proposition}\label{prop.inessential-orbit}
Assume that $\Lambda \actson (X,\mu)$ is conservative (see Section \ref{sec.conservative}). Let $0 < \lambda < 1$. If $i_0 \in I$ is such that $F(g \cdot i_0) = \lambda$ for all but finitely many $g \in \Lambda$, then $\Lambda \cdot i_0 \subset I$ is $\lambda$-inessential.
\end{proposition}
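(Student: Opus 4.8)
The plan is to verify the two defining conditions of $\lambda$-inessentiality for the orbit $J = \Lambda \cdot i_0$, using the equivalent reformulation of condition~2 stated just before the proposition. Since $\Lambda$ acts freely, $g \mapsto g \cdot i_0$ is a bijection of $\Lambda$ onto $J$. Condition~1 is immediate: by hypothesis $F(g \cdot i_0) = \lambda$, i.e.\ $\mu_{g \cdot i_0} = \nu_\lambda$, for all but finitely many $g \in \Lambda$. For condition~2 I would invoke the reformulation and replace $\mu$ by the equivalent measure $\mu'$ with $\mu'_i = \mu_i$ for $i \in I \setminus J$ and $\mu'_j = \nu_\lambda$ for $j \in J$; it then suffices to show that every $\Lambda$-invariant function in $L^\infty(Y \times X \times \R)$, for the diagonal of $\Lambda \actson (Y,\eta)$ and the Maharam extension of $\Lambda \actson (X,\mu')$, lies in $L^\infty(Y \times X_{I \setminus J} \times \R)$. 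Under $\mu'$ one has $\vphi_{\lambda,j} \equiv 0$ for every $j \in J$, so the shift term in condition~2 vanishes and this is exactly what has to be proved; the bound $\|P\|_\infty \le \|Q\|_\infty$ is automatic, since $P$ is just $Q$ read off the smaller factor.

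The key structural observation is that for $\mu'$ the system splits as a diagonal product with a mixing pmp factor. Indeed, $I \setminus J$ is $\Lambda$-invariant, and under the identification $J \cong \Lambda$ the action $\Lambda \actson (X_J,\mu'_J)$ is the classical Bernoulli shift $\Lambda \actson (\{0,1\}^\Lambda,\nu_\lambda^\Lambda)$, which is mixing. Because all coordinates in $J$ are $\mu'$-measure preserving, the Radon--Nikodym cocycle of $\mu'$ depends only on $x_{I \setminus J}$, so the Maharam extension $\Lambda \actson Y \times X \times \R$ is precisely the diagonal product of $\Lambda \actson Y \times X_{I \setminus J} \times \R$ (the diagonal of $\Lambda \actson Y$ with the Maharam extension of $\Lambda \actson X_{I \setminus J}$) and the mixing pmp action $\Lambda \actson X_J$, with no $\R$-shift coming from the $X_J$ factor.

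To finish I would check conservativity of the base and apply \cite[Theorem 2.3]{SW81}, exactly as in the proof of Lemma~\ref{lem.construction}. Conservativity of $\Lambda \actson (X,\mu)$, hence of $\Lambda \actson (X,\mu')$ since conservativity depends only on the measure class, passes to the equivariant factor $\Lambda \actson (X_{I \setminus J},\mu_{I \setminus J})$: the preimage of a nonnull wandering set would be a nonnull wandering set upstairs. It is preserved when the pmp factor $Y$ is adjoined, since this does not change the Radon--Nikodym cocycle, so the sum $\sum_{g \in \Lambda} \om(g,\cdot)$ in the criterion of \cite[Proposition 1.3.1]{Aa97} stays infinite a.e., and finally it is inherited by the Maharam extension. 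With the base conservative and $\Lambda \actson X_J$ mixing pmp, \cite[Theorem 2.3]{SW81} forces every $\Lambda$-invariant function in $L^\infty(Y \times X \times \R)$ into $L^\infty(Y \times X_{I \setminus J} \times \R)$, which is condition~2.

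The main obstacle, and the only place where care is genuinely needed, is the reduction to $\mu'$ together with the transfer of conservativity down to the factor $X_{I \setminus J}$: one must make sure the replacement of $\mu$ by $\mu'$ is legitimate (this is the content of the reformulation of condition~2) and that $J$ being a \emph{single free orbit} is precisely what turns $\Lambda \actson X_J$ into a genuine mixing \emph{pmp} Bernoulli action, so that \cite[Theorem 2.3]{SW81} applies. Everything else is a direct repetition of the absorption argument already carried out in Lemma~\ref{lem.construction}.
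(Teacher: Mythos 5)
Your proof is correct and follows essentially the same route as the paper's: replace $\mu$ by the equivalent measure equal to $\nu_\lambda$ on the orbit $J$, observe that $\Lambda \actson X_J$ is then a mixing pmp Bernoulli action split off by the Maharam extension, establish conservativity of $\Lambda \actson Y \times X_{I \setminus J} \times \R$, and conclude with \cite[Theorem 2.3]{SW81}. The only (immaterial) difference is the order of the conservativity chain: the paper goes from $X$ to $Y \times X$ via the Radon--Nikodym sum criterion and then down to the factor $Y \times X_{I \setminus J}$ by the wandering-set preimage argument, whereas you factor down to $X_{I \setminus J}$ first and then adjoin the pmp factor $Y$.
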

\begin{proof}
Write $J = \Lambda \cdot i_0$ and replace $\mu$ by the equivalent measure satisfying $\mu_j = \nu_{\lambda}$ for all $j \in J$. We have to prove that every $\Lambda$-invariant function $Q \in L^\infty(Y \times X \times \R)$ for the diagonal product of the fixed pmp action $\Lambda \actson (Y,\eta)$ and the Maharam extension $\Lambda \actson X \times \R$ belongs to $L^\infty(Y \times X_{I \setminus J} \times \R)$.

Since a nonsingular action $\Lambda \actson (X,\mu)$ is conservative if and only if $\sum_{g \in \Lambda} \om(g,x) = +\infty$ for a.e.\ $x \in X$, it follows that also the diagonal action $\Lambda \actson Y \times X$ is conservative. Note that $\Lambda \actson (X_J,\mu_J)$ is a probability measure preserving Bernoulli action and that $\Lambda \actson Y \times X \times \R$ can be viewed as the product of the action $\Lambda \actson Y \times X_{I \setminus J} \times \R$ and the action $\Lambda \actson X_J$. The action $\Lambda \actson (Y \times X_{I \setminus J},\eta \times \mu_{I \setminus J})$ is a factor of the action $\Lambda \actson (Y \times X,\eta \times \mu)$. Since the inverse image of a wandering set under a factor map remains wandering, it follows that $\Lambda \actson (Y \times X_{I \setminus J},\eta \times \mu_{I \setminus J})$ is conservative. Then also its Maharam extension $\Lambda \actson Y \times X_{I \setminus J} \times \R$ is conservative. Since the probability measure preserving Bernoulli action $\Lambda \actson X_J$ is mixing, it follows from \cite[Theorem 2.3]{SW81} that the $\Lambda$-invariant functions in $L^\infty(Y \times X \times \R)$ belong to $L^\infty(Y \times X_{I \setminus J} \times \R)$.
\end{proof}

Our next criterion for being inessential is a consequence of \cite[Lemma 4.3]{ST94}, saying the following. Assume that
\begin{itemlist}
\item $(Z,\zeta)$ and $(Z_0,\zeta_0)$ are $\sigma$-finite standard measure spaces, with $\sigma$-algebras of measurable sets $\cB$ and $\cB_0$;
\item $\pi : Z \recht Z_0$ is a measure preserving factor map;
\item $T : Z \recht Z$ is a measure preserving, conservative automorphism and $T_0 : Z_0 \recht Z_0$ is a measure preserving endomorphism;
\item $\pi \circ T = T_0 \circ \pi$ a.e.;
\item $\cB$ is, up to measure zero, generated by $\{T^k(\pi^{-1}(\cB_0)) \mid k \in \Z\}$.
\end{itemlist}
Then by \cite[Lemma 4.3]{ST94}, every $T$-invariant function $Q \in L^\infty(Z)$ factors through $\pi$.

\begin{proposition}\label{prop.inessential-Z}
Assume that $\Lambda = \Z$ and let $0 < \lambda < 1$. Assume that $\Z \actson (X,\mu)$ is conservative. If $i_0 \in I$ is such that $F(n \cdot i_0) = \lambda$ for all $n \geq 0$, then $\{n \cdot i_0 \mid n \geq n_0\}$ is $\lambda$-inessential for every $n_0 \in \Z$.

Similarly, if $i_0 \in I$ such that $F(n \cdot i_0) = \lambda$ for all $n \leq 0$, then $\{n \cdot i_0 \mid n \leq n_0\}$ is $\lambda$-inessential for every $n_0 \in \Z$.
\end{proposition}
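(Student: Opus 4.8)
The plan is to apply the criterion from \cite[Lemma 4.3]{ST94} recalled just above, with the forward shift playing the role of the non-invertible endomorphism. I treat the first statement; the second is symmetric, obtained by running the argument with $g^{-1}$ in place of the generator $g$ of $\Z$. Fix $n_0 \in \Z$ and write $J = \{n \cdot i_0 \mid n \geq n_0\}$. Since $F(n \cdot i_0) = \lambda$ for all $n \geq 0$, only finitely many $j \in J$ have $\mu_j \neq \nu_\lambda$, so condition~1 of $\lambda$-inessentiality holds; by the remark following the definition I may replace $\mu$ by the equivalent measure $\mu'$ with $\mu'_j = \nu_\lambda$ for all $j \in J$, and it then suffices to prove that every $\Lambda$-invariant $Q \in L^\infty(Y \times X \times \R)$ (for the diagonal of $\Lambda \actson Y$ and the Maharam extension of $\Lambda \actson (X,\mu')$) lies in $L^\infty(Y \times X_{I \setminus J} \times \R)$.

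The key structural observation is that $J$ is \emph{forward-invariant} under $g$, i.e.\ $g \cdot J \subset J$, while $g^{-1}$ enlarges it. I take $Z = Y \times X \times \R$ with its Maharam measure, $T$ the automorphism generated by $g$, and $Z_0 = Y \times X_{I \setminus J} \times \R$ with the factor map $\pi(y,x,s) = (y,x_{I \setminus J},s)$ that simply forgets the $X_J$-coordinates; $\pi$ is measure preserving. First I would check that $\pi$ intertwines $T$ with an endomorphism $T_0$ of $Z_0$. This rests on two consequences of forward-invariance: since $g^{-1}(I \setminus J) \subset I \setminus J$, the shifted configuration $(g\cdot x)_{I \setminus J}$ depends only on $x_{I\setminus J}$, yielding a shift $S_0$ on $X_{I \setminus J}$; and since $\mu'_i = \mu'_{g\cdot i} = \nu_\lambda$ for every $i \in J$, the terms indexed by $J$ in $\log\om(g,x) = \sum_{i}(\log\mu'_{g\cdot i}(x_i) - \log\mu'_i(x_i))$ vanish, so $\log\om(g,\cdot)$ is a function of $x_{I\setminus J}$ alone. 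Hence $T_0(y,x',s) = (g\cdot y,\, S_0 x',\, s + \Phi(x'))$, with $\Phi(x') := \log\om(g,x)$ for any $x$ with $x_{I\setminus J}=x'$, is well defined; it is automatically measure preserving because $\pi$ and $T$ are and $\pi T = T_0\pi$. As $S_0$ is not surjective, $T_0$ is a genuine endomorphism, exactly as required.

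Next I would verify the remaining hypotheses. Since $\mu' \sim \mu$ (they differ in finitely many coordinates, each an equivalence of measures on $\{0,1\}$), the action $\Z \actson (X,\mu')$ is conservative by assumption; as $Y$ is pmp the diagonal action on $Y \times X$ is conservative, hence so is its Maharam extension, i.e.\ $T$ is a conservative measure-preserving automorphism. For the generation condition, I claim $\bigvee_{k\in\Z} T^k(\pi^{-1}\cB_0) = \cB$. Indeed, for $j \in I \setminus J$ the coordinate $x \mapsto x_j$ is $\pi^{-1}\cB_0$-measurable, and composing with $T^{-k}$ gives $x \mapsto x_{g^k \cdot j}$; choosing $j = (n_0-1)\cdot i_0 \in I\setminus J$ and $k = m - n_0 + 1 \geq 1$ recovers $x \mapsto x_{m\cdot i_0}$ for every $m \geq n_0$, i.e.\ every coordinate of $X_J$, while the $Y$- and $\R$-coordinates are already $\pi^{-1}\cB_0$-measurable. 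The criterion of \cite[Lemma 4.3]{ST94} then applies, and every $T$-invariant $Q \in L^\infty(Z)$ factors through $\pi$, which is precisely the assertion $Q \in L^\infty(Y \times X_{I\setminus J}\times\R)$.

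The main conceptual obstacle — and the reason this is separated from Proposition~\ref{prop.inessential-orbit} — is that $J$ is only one-sidedly invariant, so $\Lambda \actson X_J$ is not a genuine $\Z$-action and the clean ``conservative $\times$ mixing'' product argument via \cite[Theorem 2.3]{SW81} is unavailable; one is forced into the endomorphism setting of \cite[Lemma 4.3]{ST94}. The technically delicate points are then confirming that $\log\om(g,\cdot)$ genuinely loses its $X_J$-dependence — which uses forward-invariance of $J$ in an essential way — and checking the two-sided generation condition even though $T_0$ is noninvertible.
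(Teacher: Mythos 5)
Your argument is correct and is essentially the proof in the paper: both replace $\mu$ by the equivalent measure $\mu'$, note that $\log\om(1,\cdot)$ and the shifted configuration on $I\setminus J$ factor through $X_{I\setminus J}$, deduce conservativity of the Maharam extension $T$ from that of $\Z\actson(X,\mu)$, and apply \cite[Lemma 4.3]{ST94} to the measure preserving factor map $Y\times X\times\R\recht Y\times X_{I\setminus J}\times\R$ intertwining $T$ with the endomorphism $T_0$; your explicit check of the generation hypothesis is a detail the paper leaves implicit. One immaterial slip: your $S_0$ is surjective but not \emph{injective} (it forgets the coordinate at $(n_0-1)\cdot i_0$), which is what makes $T_0$ a noninvertible endomorphism; since \cite[Lemma 4.3]{ST94} only requires a measure preserving endomorphism, nothing is affected.
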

\begin{proof}
By symmetry, it suffices to prove the first statement. Fix $n_0 \in \Z$. Replace $i_0$ by $n_0 \cdot i_0$ and replace $\mu$ by the equivalent measure satisfying $\mu_j = \nu_\lambda$ for all $j \in J := \{n \cdot i_0 \mid n \geq 0\}$. Write $J' = I \setminus J$. We have to prove that every $\Z$-invariant function $Q \in L^\infty(Y \times X \times \R)$ for the diagonal product of the fixed pmp action $\Z \actson (Y,\eta)$ and the Maharam extension $\Z \actson X \times \R$ belongs to $L^\infty(Y \times X_{J'} \times \R)$.

Denote by
$$T : Y \times X \times \R \recht Y \times X \times \R : T(y,x,s) = 1 \cdot (y,x,s) = (1 \cdot y, 1 \cdot x, \log(\om(1,x)) + s)$$
the $\eta \times \mu \times \nu$-preserving transformation given by $1 \in \Z$. Define the $\eta \times \mu_{J'} \times \nu$-preserving endomorphism
\begin{align*}
T_0 : Y \times X_{J'} \times \R \recht Y \times X_{J'} \times \R : T_0(y,x,s) = & \; (1 \cdot y, x',\log(\om(1,x))+s) \\ & \; \text{where}\;\; x'_i = x_{(-1)\cdot i} \;\;\text{for all}\;\; i \in J' \; ,
\end{align*}
which is well defined because $x \mapsto \om(1,x)$ factors through $X_{J'}$ by \eqref{eq.RN-conv-ae}.

As in the proof of Proposition \ref{prop.inessential-orbit}, since $\Z \actson (X,\mu)$ is conservative, also the diagonal action $\Z \actson (Y \times X,\eta \times \mu)$ is conservative. So, the Maharam extension $T$ is conservative as well. Applying \cite[Lemma 4.3]{ST94} as in the discussion before the proposition to the natural, measure preserving factor map $Y \times X \times \R \recht Y \times X_{J'} \times \R$, it follows that the $\Lambda$-invariant functions in $L^\infty(Y \times X \times \R)$ belong to $L^\infty(Y \times X_{J'} \times \R)$.
\end{proof}

\subsection{Determining the type: reduction to the tail}

Fix a countable infinite group $\Lambda$ acting freely on a countable set $I$ and fix a function $F : I \recht (0,1)$ satisfying \eqref{eq.conds-F}. Denote by $\Lambda \actson (X,\mu)$ the associated Bernoulli action.

\begin{proposition}\label{prop.inessential-type-III1}
Assume that $0 < \lambda < 1$ such that
$$\lim_{i \recht \infty} F(i) = \lambda \quad\text{and}\quad \sum_{i \in I} (F(i)-\lambda)^2=+\infty \; .$$
Assume that there exists a sequence of $\lambda$-inessential subsets $J_n \subset I$ (see Section \ref{sec.reduction}) such that $\bigcup_n J_n = I$. Then, $\Lambda \actson (X,\mu)$ is ergodic and of type III$_1$, and has a weakly mixing Maharam extension.
\end{proposition}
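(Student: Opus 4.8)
The plan is to establish the strongest of the three conclusions, namely that the Maharam extension is weakly mixing; by the discussion in Section~2 this amounts to showing that for \emph{every} pmp action $\Lambda \actson (Y,\eta)$ one has $L^\infty(Y \times X \times \R)^\Lambda = L^\infty(Y)^\Lambda \ot 1 \ot 1$, and specialising to $Y$ a point then yields ergodicity and type III$_1$ at once. So I fix a pmp action $\Lambda \actson (Y,\eta)$ and a $\Lambda$-invariant $Q \in L^\infty(Y \times X \times \R)$, and the goal is to prove $Q \in L^\infty(Y)^\Lambda \ot 1 \ot 1$.

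First I would assemble the inessential sets. Since the union of two $\lambda$-inessential subsets is again $\lambda$-inessential, the finite unions $K_n := J_1 \cup \cdots \cup J_n$ are $\lambda$-inessential and increase to $I$. Writing $\Sigma_K(x) := \sum_{j \in K} \vphi_{\lambda,j}(x_j)$ (a finite sum, since $\vphi_{\lambda,j} = 0$ whenever $F(j) = \lambda$, which holds for all but finitely many $j \in K_n$), condition~2 in the definition of inessential produces $P_n \in L^\infty(Y \times X_{I \setminus K_n} \times \R)$ with $\|P_n\|_\infty \leq \|Q\|_\infty$ and $Q(y,x,s) = P_n\bigl(y,x_{I \setminus K_n}, s - \Sigma_{K_n}(x)\bigr)$ a.e. In other words, $Q$ depends on the coordinates in $K_n$ only through the single real number $\Sigma_{K_n}(x)$, which it absorbs as a translation of the $\R$-variable.

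The heart of the argument is to upgrade this to $s$-independence of $Q$. Let $E = \{i \in I : F(i) \neq \lambda\}$, which is infinite by hypothesis. The independent bounded random variables $\vphi_{\lambda,e}(x_e)$, $e \in E$, have increments $\vphi_{\lambda,e}(0) - \vphi_{\lambda,e}(1) = \log\frac{F(e)(1-\lambda)}{\lambda(1-F(e))} \to 0$ (because $\lim_i F(i) = \lambda$) and total variance $\asymp \sum_{e}(F(e)-\lambda)^2 = +\infty$. Comparing the identity above at two levels $n < m$ shows, for fixed $y$ and fixed coordinates outside $K_m$, that $P_n$ is the convolution of $P_m$ with the law $\rho_{n,m}$ of $\Sigma_{K_m} - \Sigma_{K_n}$; its characteristic function obeys $|\widehat{\rho_{n,m}}(\xi)| = \prod_{e \in K_m \setminus K_n} |\mathbb{E}\, e^{i\xi \vphi_{\lambda,e}(x_e)}| \to 0$ for \emph{every} $\xi \neq 0$ as $m \to \infty$. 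Here both hypotheses enter in complementary ways: the smallness of the increments makes the closed subgroup of $\R$ they generate dense, so the decay holds at all frequencies $\xi$ (this is what excludes residual type III$_\lambda$ or III$_0$ behaviour), while the divergence of the variance is what actually produces the decay and hence the mixing of the $\R$-coordinate. The flattening of these convolutions forces the $s$-dependence of $Q$ to vanish. I expect this to be the main obstacle, precisely because one must control the infinite-measure $\R$-factor rigorously: I would transport $Q$ to the finite-measure model of the associated flow $\R \actson (Z,\rho)$, where $L^\infty(Y \times X \times \R)^\Lambda \cong L^\infty(Z,\rho)$ and the computation above shows the flow acts trivially; and since the partial-sum laws $\rho_{n,m}$ are purely atomic, the heuristic ``convolution with a spreading non-lattice law flattens a bounded function'' must be made precise through characteristic-function (concentration-function) estimates rather than densities.

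Once $Q$ is known to be independent of $s$, the identity becomes $Q(y,x) = P_n(y, x_{I \setminus K_n})$ for every $n$, so $Q$ is measurable with respect to $\bigcap_n \bigl(\sigma(y) \vee \sigma(x_i : i \in I \setminus K_n)\bigr)$. As the coordinates $(x_i)$ are independent and $I \setminus K_n \downarrow \emptyset$, Kolmogorov's $0$--$1$ law, relativised to the independent factor $(Y,\eta)$, collapses this tail to $\sigma(y)$, whence $Q \in L^\infty(Y)$; the $\Lambda$-invariance of $Q$ then reads as invariance under $\Lambda \actson (Y,\eta)$, giving $Q \in L^\infty(Y)^\Lambda$. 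Taking $Y$ trivial shows that the Maharam extension is ergodic, so $\Lambda \actson (X,\mu)$ is ergodic and of type III$_1$ (and in particular conservative, so that no separate conservativeness input is needed here), while the general case delivers weak mixing of the Maharam extension, as required.
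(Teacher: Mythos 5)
Your opening reduction coincides with the paper's (finite unions of inessential sets exhaust $I$, giving $Q(y,x,s)=P_n\bigl(y,x_{I\setminus K_n},\,s-\sum_{j\in K_n}\vphi_{\lambda,j}(x_j)\bigr)$), and your closing relative Kolmogorov $0$--$1$ law step is fine. The gap is in the middle step, and it is genuine. First, a precise error: for fixed $(y,x_{I\setminus K_m})$, the function $P_n$ still depends on the coordinates $x_{K_m\setminus K_n}$ (namely through the random translation $\Sigma_{K_m\setminus K_n}(x)$), so $P_n$ is \emph{not} the convolution $P_m*\rho_{n,m}$; only its average over those coordinates is. Consequently, even if the flattening were proved, all you would obtain is that $\int P_n(y,x',t)\,d\mu(x')$, equivalently the conditional expectation $E[Q\mid y,s]$, is constant in $s$. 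That is much weaker than $s$-independence of $Q$: it cannot exclude invariant functions with \emph{joint} $(x,s)$-dependence, such as $Q(y,x,s)=\exp\bigl(2\pi i(c(x)+s)/\log\lambda\bigr)$ with $\int\exp\bigl(2\pi i\,c(x)/\log\lambda\bigr)\,d\mu(x)=0$, which has $E[Q\mid y,s]\equiv 0$ yet genuinely depends on $s$ --- exactly the type III$_\lambda$/III$_0$ obstruction you must rule out. So the sentences ``the flattening of these convolutions forces the $s$-dependence of $Q$ to vanish'' and ``the computation above shows the flow acts trivially'' are not consequences of your computation; they \emph{are} the theorem to be proved. (The flattening claim itself is also not routine: the functions $P_m$ vary with $m$, $\widehat{\rho_{n,m}}$ decays only pointwise away from $0$ while the moments of $\rho_{n,m}$ blow up, so frequency-side estimates of the kind you sketch do not close, especially when $\sum_i(F(i)-\lambda)^2$ diverges slowly.)

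For comparison, the paper resolves this step by conditioning in the opposite direction: it takes $S_n$ to be the conditional expectation of $Q$ onto $L^\infty(Y\times X_{I_n}\times\R)$, i.e.\ it integrates out the \emph{tail} rather than the finite block, so that the exact translation structure is preserved and no convolution appears: $S_n(y,x,s)=P_n\bigl(y,\,s-\sum_{i\in I_n}\vphi_{\lambda,i}(x_i)\bigr)$. Each $S_n$ is then manifestly invariant under the single-coordinate flips $\si_i$, $i\in I_n$, corrected by the tail cocycle $\al$, and $S_n\recht Q$ a.e.\ by martingale convergence, so $Q$ is invariant under the skew product $\cS(\al)$ of the tail equivalence relation. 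The two hypotheses $F(i)\recht\lambda$ and $\sum_i(F(i)-\lambda)^2=+\infty$ are then consumed by the ergodicity of $\cR(\al)$, quoted from \cite[Proposition 1.5]{DL16}; that citation is precisely the engine that kills joint $(x,s)$-dependence, and it is the ingredient your proposal attempts to rebuild by Fourier analysis but does not supply.
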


\begin{proof}
Enumerate $I = \{i_1,i_2,\ldots\}$. Define $I_n = \{i_1,\ldots,i_n\}$. For every $n \geq 1$, there exists an $m \geq 1$ such that $I_n \subset \bigcup_{k=1}^m J_k$. Since the union of two $\lambda$-inessential subsets is inessential and since subsets of $\lambda$-inessential sets are again $\lambda$-inessential, it follows that $I_n$ is $\lambda$-inessential for every $n$. Write $I_n' = I \setminus I_n$.

Fix a pmp action $\Lambda \actson (Y,\eta)$. We have to prove that every $\Lambda$-invariant element $Q \in L^\infty(Y \times X \times \R)$ for the diagonal product of $\Lambda \actson (Y,\eta)$ and the Maharam extension $\Lambda \actson (X \times \R, \mu \times \nu)$ given by \eqref{eq.maharam} belongs to $L^\infty(Y) \ot 1 \ot 1$. Using the notation in \eqref{eq.maps-vphi-i}, we find $Q_n \in L^\infty(Y \times X_{I_n'} \times \R)$ with $\|Q_n\|_\infty \leq \|Q\|_\infty$ for all $n$ and
\begin{equation}\label{eq.stap}
Q(y,x,s) = Q_n\Bigl(y, x_{I_n'} \; , \; s - \sum_{j \in I_n} \vphi_{\lambda,j}(x_j) \Bigr) \quad\text{for a.e.\ $(x,s) \in X \times \R$} \; .
\end{equation}
Define $S_n \in L^\infty(Y \times X_{I_n} \times \R)$ as the conditional expectation of $Q$ onto $L^\infty(Y \times X_{I_n} \times \R)$. By martingale convergence, we have that $S_n(y,x,s) \recht Q(y,x,s)$ for a.e.\ $(x,s) \in X \times \R$. Define $P_n \in L^\infty(Y \times \R)$ such that $(P_n)_{13}$ is the conditional expectation of $Q_n$ onto $L^\infty(Y) \ovt 1 \ovt L^\infty(\R)$. Then $\|P_n\|_\infty \leq \|Q_n\|_\infty \leq \|Q\|_\infty$ and it follows from \eqref{eq.stap} that
\begin{equation}\label{eq.form-Hn}
S_n(y,x,s) = P_n\Bigl(y,s - \sum_{i \in I_n} \vphi_{\lambda,i}(x_i) \Bigr) \; .
\end{equation}
Denote by $\cR$ the \emph{tail equivalence relation} on $(X,\mu)$ given by $(x,x') \in \cR$ if and only if $x_i = x'_i$ for all but finitely many $i \in I$. Define the $1$-cocycle
$$\al : \cR \recht \R : \al(x,x') = \sum_{i \in I} (\vphi_{\lambda,i}(x_i) - \vphi_{\lambda,i}(x'_i)) \; .$$
Denote by $\cR(\al)$ the associated skew product, i.e.\ the equivalence relation on $X \times \R$ given by $(x,s) \sim (x',t)$ if and only if $(x,x') \in \cR$ and $s = \al(x,x') + t$. Denote by $\cS(\al)$ the equivalence relation on $Y \times X \times \R$ given by $\id \times \cR(\al)$, i.e.\ with $(y,x,s) \sim_{\cS(\al)} (y',x',t)$ if and only if $y=y'$ and $(x,s) \sim_{\cR(\al)} (x',t)$.

We claim that $Q \in L^\infty(Y \times X \times \R)$ is $\cS(\al)$-invariant. Define $\si : \{0,1\} \recht \{0,1\}$ given by $\si(0)=1$ and $\si(1) = 0$. For every $i \in I$, define
\begin{align*}
\si_i : Y \times X \times \R \recht Y \times X \times \R : \si_i(y,x,s) = & \; (y,x',s-\vphi_{\lambda,i}(x_i) + \vphi_{\lambda,i}(\si(x_i))) \\ & \; \text{where $x'_j = x_j$ if $j \neq i$ and $x'_i = \si(x_i)$.}
\end{align*}
Since the graphs of the automorphisms $(\si_i)_{i \in I}$ generate the equivalence relation $\cS(\al)$, to prove the claim, it suffices to prove that $Q(\si_i(y,x,s)) = Q(y,x,s)$ for all $i \in I$ and a.e.\ $(y,x,s) \in Y \times X \times \R$. Whenever $i \in I_n$, it follows from \eqref{eq.form-Hn} that $S_n(\si_i(y,x,s)) = S_n(y,x,s)$ for all $(y,x,s)$. Since $S_n \recht Q$ a.e.\ and $i \in I_n$ for $n$ large enough, the claim is proven.

By \cite[Proposition 1.5]{DL16}, the cocycle $\al$ is ergodic, meaning that $\cR(\al)$ is an ergodic equivalence relation. So every $\cS(\al)$-invariant element $Q \in L^\infty(Y \times X \times \R)$ belongs to $L^\infty(Y) \ot 1 \ot 1$. Therefore, $Q \in L^\infty(Y) \ot 1 \ot 1$ and the proposition is proven.
\end{proof}

\subsection{\boldmath Bernoulli actions of the group $\Z$}

Combining Propositions \ref{prop.conservative}, \ref{prop.inessential-Z} and \ref{prop.inessential-type-III1}, we get the following result that we use to construct numerous concrete examples of type III$_1$ Bernoulli actions of $\Z$.

\begin{proposition}\label{prop.Bernoulli-Z-generic}
Let $I$ be a countable set and $\Z \actson I$ a free action. Let $0 < \delta < 1$ and $\kappa > \delta^{-2} + \delta^{-1}(1-\delta)^{-2}$. Assume that $F : I \recht [\delta,1-\delta]$ is a function satisfying the following conditions.
\begin{enumlist}
\item\label{as.one} There exists a $0 < \lambda < 1$ such that $\lim_{i \recht \infty} F(i) = \lambda$ and $\sum_{i \in I} (F(i)-\lambda)^2 = +\infty$.
\item\label{as.two} For every $k \in \Z$, the function $c_k : I \recht \R : c_k(i) = F(i) - F((-k)\cdot i)$ belongs to $\ell^2(I)$.
\item\label{as.three} We have $\sum_{k \in \Z} \exp(-\kappa \|c_k\|_2^2) = +\infty$.
\item\label{as.four} For every $i \in I$, there exist $n_i \in \Z$ and $\eps_i \in \{1,-1\}$ such that $F(n\cdot i) = \lambda$ for all $n \in \Z$ with $\eps_i n \leq n_i$.
\end{enumlist}
Then, the Bernoulli action $\Z \actson (X,\mu) = \prod_{i \in I} (\{0,1\},\mu_i)$ with $\mu_i(0) = F(i)$ is nonsingular, essentially free, ergodic and of type III$_1$, and has a weakly mixing Maharam extension.
\end{proposition}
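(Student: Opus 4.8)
The plan is to assemble the three results whose combination is announced just before the statement. Proposition \ref{prop.conservative} will give conservativeness, Proposition \ref{prop.inessential-Z} will manufacture a supply of $\lambda$-inessential subsets out of condition \ref{as.four}, and Proposition \ref{prop.inessential-type-III1} will then promote this to ergodicity and type III$_1$ with weakly mixing Maharam extension.

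First I would verify the standing hypotheses \eqref{eq.conds-F}. The assumption $F : I \recht [\delta,1-\delta]$ is the first line of \eqref{eq.conds-F}, while condition \ref{as.two}---which says exactly that $\|c_k\|_2^2 = \sum_{i\in I}(F(i)-F((-k)\cdot i))^2 < \infty$ for every $k \in \Z$---is the second line, since every element of $\Z$ has this form and $c_k$ is the cocycle \eqref{eq.one-cocycle-c} for $g = k$. Thus $\Z \actson (X,\mu)$ is nonsingular and essentially free. Because $\kappa > \delta^{-2} + \delta^{-1}(1-\delta)^{-2}$ and, by \ref{as.three}, $\sum_{k\in\Z}\exp(-\kappa\|c_k\|_2^2) = +\infty$, Proposition \ref{prop.conservative}(1) applies verbatim and shows that $\Z \actson (X,\mu)$ is conservative.

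The core step is the production of $\lambda$-inessential sets. I would fix a set of representatives for the orbits of $\Z \actson I$. For a representative $i_0$, condition \ref{as.four} gives $n_{i_0} \in \Z$ and $\eps_{i_0} \in \{1,-1\}$. In the case $\eps_{i_0} = -1$ we have $F(n\cdot i_0) = \lambda$ for all $n \geq -n_{i_0}$; translating within the orbit, i.e.\ replacing $i_0$ by $(-n_{i_0})\cdot i_0$, we arrange $F(n\cdot i_0)=\lambda$ for all $n \geq 0$, which together with the conservativeness just proved is exactly the hypothesis of the first part of Proposition \ref{prop.inessential-Z}. Hence $\{n\cdot i_0 \mid n \geq m\}$ is $\lambda$-inessential for every $m \in \Z$; the case $\eps_{i_0}=1$ is symmetric and uses the second part of Proposition \ref{prop.inessential-Z}, giving that $\{n\cdot i_0 \mid n \leq m\}$ is $\lambda$-inessential for every $m$. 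In either case the countable family of these half-orbits (indexed by $m$) has union equal to the whole orbit $\Z\cdot i_0$. Collecting these families over the countably many orbit representatives yields a countable family of $\lambda$-inessential subsets $J_n \subset I$ with $\bigcup_n J_n = I$.

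It then remains to invoke Proposition \ref{prop.inessential-type-III1}: condition \ref{as.one} supplies precisely its two hypotheses $\lim_{i\recht\infty}F(i)=\lambda$ and $\sum_{i\in I}(F(i)-\lambda)^2=+\infty$, and the proposition concludes that $\Z\actson(X,\mu)$ is ergodic, of type III$_1$, with weakly mixing Maharam extension. The only delicate point I foresee is the bookkeeping in the third step---matching the threshold $n_{i_0}$ and the sign $\eps_{i_0}$ of condition \ref{as.four} to the threshold-$0$ form in which Proposition \ref{prop.inessential-Z} is stated, by translating the base point inside each orbit, and then checking that the chosen half-orbits genuinely exhaust $I$. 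Everything else is a direct application of the three cited propositions.
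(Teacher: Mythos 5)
Your proposal is correct and follows essentially the same route as the paper's proof: nonsingularity and essential freeness from \eqref{eq.conds-F} via condition~2, conservativeness from condition~3 and Proposition \ref{prop.conservative}, $\lambda$-inessential half-orbits from condition~4 and Proposition \ref{prop.inessential-Z}, and the conclusion from condition~1 and Proposition \ref{prop.inessential-type-III1}. The base-point translation you spell out to reduce the threshold $n_i$ and sign $\eps_i$ to the threshold-$0$ form of Proposition \ref{prop.inessential-Z} is exactly the (implicit) bookkeeping in the paper's argument, so no gap remains.
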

\begin{proof}
By \ref{as.two}, the Bernoulli action $\Z \actson (X,\mu)$ is nonsingular. Since $\delta \leq \mu_i(0) \leq 1-\delta$ for all $i \in I$, the action is essentially free. By \ref{as.three} and Proposition \ref{prop.conservative}, the action $\Z \actson (X,\mu)$ is conservative. By \ref{as.four} and Proposition \ref{prop.inessential-Z}, the subset $\{n \cdot i \mid n \in \Z , \eps_i n \leq m\} \subset I$ is $\lambda$-inessential for every $i \in I$ and every $m \in \Z$. Since these subsets cover $I$, it follows from \ref{as.one} and Proposition \ref{prop.inessential-type-III1} that $\Z \actson (X,\mu)$ is ergodic and of type III$_1$, and that its Maharam extension is weakly mixing.
\end{proof}

\subsection{\boldmath Amenable groups have $1$-cocycles of arbitrarily small growth}\label{sec.cocycles-slow-growth}

A countable group $G$ has the \emph{Haagerup property} if there exists a proper $1$-cocycle $c : G \recht \cH$ into \emph{some} unitary representation $\pi : G \recht \cU(\cH)$. In \cite[Proposition 3.10]{CTV05}, it is proven that a group with the Haagerup property admits such proper $1$-cocycles $c : G \recht \cH$ of arbitrary slow growth. In \cite{BCV93}, it is proven that all amenable groups have the Haagerup property. Mimicking that proof, we show that an amenable group $G$ admits a proper $1$-cocycle $c : G \recht \ell^2(G)$ of arbitrary slow growth.

A function $\vphi : I \recht [0,+\infty)$ on a countable infinite set $I$ is called \emph{proper} if $\{i \in I \mid \vphi(i) \leq \kappa\}$ is finite for every $\kappa > 0$.

Recall that a F{\o}lner sequence for an amenable group $G$ is a sequence of finite, nonempty subsets $A_n \subset G$ satisfying
$$\lim_n \frac{|g A_n \triangle A_n|}{|A_n|} = 0 \quad\text{for all}\;\; g \in G \; .$$

\begin{proposition}\label{prop.cocycle-small-growth}
Let $G$ be an amenable countable infinite group and $\vphi : G \recht [0,+\infty)$ a proper function with $\vphi(g) > 0$ for all $g \neq e$. Then there exists a $1$-cocycle $c : G \recht \ell^2(G)$ such that $\|c_g\|_2 \leq \vphi(g)$ for every $g \in G$ and such that $g \mapsto \|c_g\|_2$ is proper.

More concretely, given $\vphi$, given any F{\o}lner sequence $A_n \subset G$ with all $A_n$ being disjoint and given $\delta > 0$, we can pass to a subsequence and choose $\eps_n \in (0,\delta)$ such that
\begin{itemlist}
\item $\lim_n \eps_n = 0$ and $\sum_n \eps_n^2 = +\infty$,
\item the function
\begin{equation}\label{eq.def-F}
F : G \recht [0,\delta) : F(g) = \begin{cases} \eps_n / \sqrt{|A_n|} &\;\;\text{if $g \in A_n$ for some $n$,} \\ 0 &\;\;\text{if $g \not\in \bigcup_n A_n$,}\end{cases}
\end{equation}
is such that $c_g(k) = F(k) - F(g^{-1}k)$ defines a $1$-cocycle $c : G \recht \ell^2(G)$ with the properties that $\|c_g\|_2 \leq \vphi(g)$ for every $g \in G$ and that $g \mapsto \|c_g\|_2$ is proper.
\end{itemlist}
\end{proposition}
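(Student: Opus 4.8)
The plan is to realise the stated $F$-construction as a $1$-cocycle for the \emph{left regular representation} and to control $\|c_g\|_2$ entirely through the ``Følner ratios'' $\theta_n(g) := |gA_n \triangle A_n|/|A_n|$. First I would record that $c_g = F - \lambda_g F$, where $(\lambda_g F)(k) = F(g^{-1}k)$, satisfies the cocycle identity $c_{gh} = c_g + \lambda_g c_h$, so it suffices to estimate a single norm. Writing $f_n = \eps_n/\sqrt{|A_n|}$ for the value of $F$ on $A_n$ and $P_n = |A_n|^{-1/2}\,1_{A_n}$, the families $\{P_n\}_n$ and $\{\lambda_g P_n\}_n$ are each orthonormal (the $A_n$ are disjoint), $c_g = \sum_n \eps_n (P_n - \lambda_g P_n)$, and $\|P_n - \lambda_g P_n\|_2^2 = \theta_n(g)$. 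Two elementary facts about $\theta_n$ drive everything: by the Følner property $\theta_n(g) \recht 0$ as $n \recht \infty$ for each fixed $g$, while by disjointness $\theta_n(g) = 2$ whenever $g \notin A_n A_n^{-1}$, i.e.\ for all but finitely many $g$ once $n$ is fixed.

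For the upper bound I would note that $\langle P_n, \lambda_g P_m\rangle = |A_n \cap gA_m|/\sqrt{|A_n||A_m|} \ge 0$, so for $n \neq m$ the inner product $\langle P_n - \lambda_g P_n, P_m - \lambda_g P_m\rangle = -\langle P_n,\lambda_g P_m\rangle - \langle \lambda_g P_n, P_m\rangle \le 0$. Expanding the squared norm of the partial sums $\sum_{n \le N}\eps_n(P_n - \lambda_g P_n)$ and discarding the nonpositive off-diagonal terms shows that these partial sums are Cauchy and that
\begin{equation*}
\|c_g\|_2^2 \le \sum_n \eps_n^2\, \theta_n(g) \; .
\end{equation*}
In particular $c_g \in \ell^2(G)$ as soon as the right-hand side is finite, which the subsequence choice below guarantees.

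The main obstacle is properness of $g \mapsto \|c_g\|_2$, since the nonpositive cross terms a priori only help the upper bound. Here I would expand $\|c_g\|_2^2$ directly over coordinates into three manifestly nonnegative pieces, according to whether $k$ and $g^{-1}k$ lie in $A := \bigcup_m A_m$: the term $\sum_n f_n^2\,|A_n \setminus gA|$, its mirror $\sum_m f_m^2\,|gA_m \setminus A|$, and the inter-block part $\sum_{n \neq m} (f_n - f_m)^2\,|A_n \cap gA_m|$. The key device is to pass to a subsequence along which $f_n$ \emph{decreases geometrically}, say $f_{n+1} \le \rho f_n$ with $\rho < 1$; this is possible because $|A_n| \recht \infty$, so $f_n = \eps_n/\sqrt{|A_n|}$ can be driven down at any prescribed rate. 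Geometric decrease forces $(f_n - f_m)^2 \ge (1-\rho)^2 f_n^2$ for \emph{all} $n \neq m$, so the inter-block part dominates $(1-\rho)^2 \sum_{n\neq m} f_n^2 |A_n \cap gA_m|$; combining this with the first piece and the identity $\sum_{m \neq n}|A_n \cap gA_m| = |A_n \setminus gA_n| - |A_n \setminus gA|$ yields the matching lower bound
\begin{equation*}
\|c_g\|_2^2 \ge \tfrac{1}{2}(1-\rho)^2 \sum_n \eps_n^2\, \theta_n(g) \; .
\end{equation*}
Since $\theta_n(g) = 2$ for $g$ outside the finite set $\bigcup_{n \le N} A_n A_n^{-1}$, the quantity $\sum_n \eps_n^2 \theta_n(g)$ exceeds $2\sum_{n \le N}\eps_n^2$ off a finite set, and $\sum_n \eps_n^2 = +\infty$ then forces both bounds to tend to $+\infty$; properness follows.

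It remains to arrange $\|c_g\|_2 \le \vphi(g)$ together with $\eps_n \recht 0$ and $\sum_n \eps_n^2 = +\infty$. I would use that $\vphi$ is proper, so each $\{g : \vphi(g) \le n\}$ is finite and $R := \inf_{g \neq e}\vphi(g) > 0$. Choosing each $A_n$ far enough out in the original Følner sequence, I can simultaneously make $f_n$ geometrically decreasing (for the lower bound) and make $\theta_n(g) \le \eta_n$ for every $g$ with $\vphi(g) \le n$, where $\eta_n \downarrow 0$ is summable with $\sum_n \eta_n$ as small as desired. Taking $\eps_n \in (0,\delta)$ with $\eps_n \recht 0$, $\sum_n \eps_n^2 = +\infty$ but with slowly growing partial sums (e.g.\ $\eps_n^2 = \beta/n$ for small $\beta$), the upper bound splits, for each $g$, as
\begin{equation*}
\sum_n \eps_n^2 \theta_n(g) \le 2\!\!\sum_{n < \vphi(g)}\!\! \eps_n^2 \; + \; \sum_n \eps_n^2 \eta_n \; ,
\end{equation*}
where the first sum grows only logarithmically in $\vphi(g)$ (and is empty when $\vphi(g) < 1$) and the second is bounded by a constant that can be made smaller than $R^2$. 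A routine check then gives $\sum_n \eps_n^2 \theta_n(g) \le \vphi(g)^2$ for all $g$, i.e.\ $\|c_g\|_2 \le \vphi(g)$, completing the construction.
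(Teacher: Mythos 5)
Your proof is correct, but it diverges from the paper's argument in the decisive step. For the upper bound the two proofs are close cousins: the paper bounds $\|c_g\|_2^2 = \|g\cdot F - F\|_2^2 \leq \|g\cdot F^2 - F^2\|_1$ (using $(a-b)^2\le|a^2-b^2|$ for $a,b\ge 0$) and estimates this $\ell^1$-norm by choosing $\eps_n$ along an enumeration $G=\{g_k\}$ so that $\sum_{n=1}^k\eps_n^2\le\tfrac12\vphi(g_k)^2$ and the F{\o}lner ratios satisfy $\eps_n^2\,|g_kA_n\triangle A_n|/|A_n|\le\eps_k^2 2^{-n}$; your expansion in the disjointly supported bumps $P_n$ with nonpositive cross terms yields the same kind of bound $\|c_g\|_2^2\le\sum_n\eps_n^2\theta_n(g)$, and your bookkeeping (splitting the sum at $n=\vphi(g)$, taking $\eps_n^2=\beta/n$) is an equally valid way to arrange $\|c_g\|_2\le\vphi(g)$. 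The genuine difference is properness: the paper notes that $F(g)\to 0$ while $\sum_g F(g)^2=\sum_n\eps_n^2=+\infty$, so $c$ is not inner, and then invokes \cite[Theorem 2.5]{PT10}, by which every non-inner $1$-cocycle of an infinite amenable group into $\ell^2(G)$ is automatically proper. You instead prove properness by hand: you arrange geometric decay $f_{n+1}\le\rho f_n$ along the subsequence (possible since $|A_n|\to\infty$ for any F{\o}lner sequence of an infinite group), and your decomposition of $\|c_g\|_2^2$ into three nonnegative pieces together with the identity $\sum_{m\neq n}|A_n\cap gA_m| = |A_n\setminus gA_n| - |A_n\setminus gA|$ correctly yields the matching lower bound $\|c_g\|_2^2\ge\tfrac12(1-\rho)^2\sum_n\eps_n^2\theta_n(g)$; since $\theta_n(g)=2$ off the finite sets $A_nA_n^{-1}$ and $\sum_n\eps_n^2=+\infty$, properness follows. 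Your route costs an extra constraint on the subsequence and a longer computation, but it buys a self-contained, elementary proof with two-sided quantitative control of $\|c_g\|_2$ in terms of F{\o}lner data, whereas the paper's proof is shorter at the price of a nontrivial external theorem. In a full write-up you should make explicit the standard fact that $|A_n|\to\infty$ (needed for the geometric decay) and the order of quantifiers: $\eta_n$, $R=\inf_{g\neq e}\vphi(g)>0$ and $\beta$ (hence the $\eps_n$) are fixed before the subsequence of $(A_n)$ is extracted; your sketch respects this, so these are presentational remarks rather than gaps.
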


\begin{proof}
Enumerate $G = \{g_0,g_1,g_2,\ldots\}$ with $g_0 = e$. Choose a sequence $\eps_n \in (0,\delta)$ such that $\lim_n \eps_n = 0$, $\sum_n \eps_n^2 = +\infty$ and
$$\sum_{n=1}^k \eps_n^2 \leq \frac{1}{2} \vphi(g_k)^2 \quad\text{for all}\;\; k \geq 1 \; .$$
After passing to a subsequence of $A_n$, we may assume that
$$\eps_n^2 \, \frac{|g_k A_n \triangle A_n|}{|A_n|} \leq \eps_k^2 \, 2^{-n} \quad\text{for all}\;\; 1 \leq k \leq n \; .$$
Define the function $F$ as in \eqref{eq.def-F}. For every $k \geq 1$, we have
\begin{align*}
\|g_k \cdot F^2 - F^2 \|_1 & \leq 2 \sum_{n=1}^{k-1} \eps_n^2 + \sum_{n=k}^\infty \eps_n^2 \, \frac{|g_k A_n \triangle A_n|}{|A_n|}
\leq 2 \sum_{n=1}^{k-1} \eps_n^2 + \sum_{n=k}^\infty \eps_k^2 \, 2^{-n} \\
& \leq 2 \sum_{n=1}^{k-1} \eps_n^2 + 2 \eps_k^2 = 2 \sum_{n=1}^k \eps_k^2 \leq \vphi(g_k)^2 \; .
\end{align*}
Since $\|g_k \cdot F - F\|_2^2 \leq \|g_k \cdot F^2 - F^2 \|_1 \leq \vphi(g_k)^2$, we indeed find that the $1$-cocycle $c : G \recht \ell^2(G)$ defined by $c_g(k) = F(k) - F(g^{-1}k)$ satisfies $\|c_g\|_2 \leq \vphi(g)$ for all $g \in G$.

Since $\lim_{g \recht \infty} F(g) = 0$ and $\sum_g F(g)^2 = +\infty$, the $1$-cocycle $c$ is not inner. By \cite[Theorem 2.5]{PT10}, the $1$-cocycle $c$ is proper.
\end{proof}

\subsection{Proof of Theorem \ref{thm.amenable-III1}}\label{sec.proof-amenable-III1}

We first prove that in the following two cases, the group $G$ admits a nonsingular Bernoulli action $G \actson \prod_{g \in G} (\{0,1\},\mu_g)$ with base space $\{0,1\}$ satisfying the conclusions of Theorem \ref{thm.amenable-III1}.

{\bf Case 1.} $G$ is an amenable group that admits an infinite subgroup $\Lambda$ of infinite index.

{\bf Case 2.} $G$ admits a copy of $\Z$ as a finite index subgroup.

{\bf Proof in case 1.} We start by proving that $G$ admits a F{\o}lner sequence $A_n \subset G$ for which all the sets $\Lambda A_n$ are disjoint. To prove this claim, let $B_n \subset G$ be an arbitrary
F{\o}lner sequence and define the left invariant mean $m$ on $G$ as a limit point of the means $m_n(C) = |C \cap B_n| / |B_n|$. Since $\Lambda < G$ has infinite index, we can fix a sequence $g_n \in G$ such that the sets $g_n \Lambda$ are disjoint. It follows that for every fixed $h \in G$, the sets $g_n \Lambda h$ are disjoint. By left invariance, this forces $m(\Lambda h) = 0$. So, for every finite subset $\cF \subset G$, we get that $m(\Lambda \cF) = 0$. This implies that after passing to a subsequence of $B_n$, we may assume that $|\Lambda \cF \cap B_n| / |B_n| \recht 0$ for every finite subset $\cF \subset G$.

Write $G$ as an increasing union of finite subsets $\cF_n \subset G$ and choose $\cF_n$ such that $B_k \subset \cF_n$ for all $k < n$. Choose inductively $s_1 < s_2 < \cdots$ such that
$$\frac{|\Lambda \cF_{s_{n-1}} \cap B_{s_n}|}{|B_{s_n}|} < \frac{1}{n}$$
for all $n \geq 1$. Defining $A_n = B_{s_n} \setminus \Lambda \cF_{s_{n-1}}$, we have found a F{\o}lner sequence $A_n \subset G$ for which all the sets $\Lambda A_n$ are disjoint.

Let $G = \{g_0,g_1,g_2,\ldots\}$ be an enumeration of the group $G$ such that $g_0 = e$ and $\{g_0,g_2,g_4,\ldots\}$ is an enumeration of the infinite subgroup $\Lambda$. By Proposition \ref{prop.cocycle-small-growth}, we can pass to a subsequence of $A_n$ and choose $\eps_n \in (0,1/6)$ such that $\eps_n \recht 0$, $\sum_n \eps_n^2 = +\infty$ and such that the function $F$ defined by \eqref{eq.def-F} has the property that the associated $1$-cocycle $c : G \recht \ell^2(G) : c_g(k) = F(k) - F(g^{-1}k)$ satisfies
$$\|c_{g_n}\|_2^2 \leq \frac{1}{16} \log(1+n)$$
for all $n \geq 0$.

Define the probability measures $\mu_k$ on $\{0,1\}$ given by $\mu_k(0) = F(k) + 1/2$ and note that $1/2 \leq \mu_k(0) \leq 2/3$ for all $k \in G$. Consider the associated Bernoulli action $G \actson (X,\mu)$, which is nonsingular because of \eqref{eq.easier-nonsingular}. Then,
$$\sum_{g \in \Lambda} \exp\bigl(-16 \|c_g\|_2^2\bigr) \geq \sum_{n=0}^\infty \exp\bigl(-\log(1+n)\bigr) = + \infty \; .$$
It follows from Proposition \ref{prop.conservative} that the action $\Lambda \actson (X,\mu)$ is conservative. By construction, for every $k \in G$, there is at most one $A_n$ that intersects $\Lambda k$. It then follows from Proposition \ref{prop.inessential-orbit} that $\Lambda k \subset G$ is $1/2$-inessential, for every $k \in G$. So by Proposition \ref{prop.inessential-type-III1}, the action $\Lambda \actson (X,\mu)$ is ergodic and of type III$_1$, and has a weakly mixing Maharam extension. A fortiori, the same holds for $G \actson (X,\mu)$.

{\bf Proof in case 2.} In case 2, $G$ also admits a copy of $\Z$ as a finite index \emph{normal} subgroup. Denote $\kappa = [G:\Z]$ and fix $g_1,\ldots,g_\kappa$ such that $G$ is the disjoint union of the $g_i \Z$. Define the function
$$F_0 : \Z \recht (0,1) : F_0(n) = \begin{cases} \frac{1}{2} &\;\;\text{if $n \leq 3$,}\\
\frac{1}{2} + \frac{1}{\sqrt{n \log(n)}} &\;\;\text{if $n \geq 4$,}\end{cases}$$
and then define the function $F : G \recht (0,1)$ given by $F(g_i n) = F_0(n)$ for all $i \in \{1,\ldots,\kappa\}$ and $n \in \Z$. For every $g \in G$, define the function $c_g : G \recht \R$ given by $c_g(h) = F(h) - F(g^{-1}h)$.

Since $\sum_{n=4}^k (n\log(n))^{-1}$ grows like $\log(\log(k))$, it follows from Lemma \ref{lem.translate-function} below that for every $k \in \Z$, the function $F_0 - k \cdot F_0$ belongs to $\ell^2(\Z)$ and that $\|F_0 - k \cdot F_0\|_2^2 / \log(|k|)$ tends to zero as $|k| \recht \infty$ in $\Z$. It then also follows that $c_g \in \ell^2(G)$ for every $g \in G$ and that $\|c_k\|_2^2 / \log(|k|)$ tends to zero when $k$ tends to infinity in $\Z$. Defining the probability measures $\mu_h$ on $\{0,1\}$ given by $\mu_h(0) = F(h)$, the associated Bernoulli action $G \actson (X,\mu)$ is nonsingular and essentially free.
%
%
%
Applying Proposition \ref{prop.Bernoulli-Z-generic} to the left action $\Z \actson G$, it follows that $\Z \actson (X,\mu)$ is ergodic and of type III$_1$, and has a weakly mixing Maharam extension. A fortiori, the same holds for $G \actson (X,\mu)$.

To conclude the proof of Theorem \ref{thm.amenable-III1}, let $G$ be an arbitrary amenable group. Applying the proof of case~1 to the amenable group $G \times \Z$ with the infinite subgroup $G \times \{0\}$ of infinite index, we find for every $(g,n) \in G \times \Z$ a probability measure $\mu_{(g,n)}$ on $\{0,1\}$ such that the Bernoulli action $G \actson (X,\mu) = \prod_{(g,n) \in G \times \Z} (\{0,1\},\mu_{(g,n)})$ is nonsingular and satisfies all the conclusions of the theorem. Defining $X_0 = \prod_{n \in \Z} \{0,1\}$ and $\mu_g = \prod_{n \in \Z} \mu_{(g,n)}$ for every $g \in G$, the Bernoulli action $G \actson (X,\mu)$ can be identified with the Bernoulli action $G \actson \prod_{g \in G} (X_0,\mu_g)$. This concludes the proof of Theorem \ref{thm.amenable-III1}.

\begin{lemma}\label{lem.translate-function}
Let $a_0 \geq a_1 \geq a_2 \geq \cdots$ be a decreasing sequence of strictly positive real numbers. Let $\lambda > 0$ and $n_0 \in \Z$. Define the function
$$F : \Z \recht (0,+\infty) : F(n) = \begin{cases} \lambda + a_{n-n_0} &\;\; \text{if $n \geq n_0$,}\\
\lambda &\;\;\text{if $n < n_0$.}\end{cases}$$
For every $k \in \Z$, define the function $c_k : \Z \recht \R : c_k(n) = F(n) - F(n-k)$. Then, $c_k \in \ell^2(\Z)$ and
$$\sum_{n=0}^{|k|-1} a_n^2 \; \leq \; \|c_k\|_2^2 \; \leq \; 2 \sum_{n=0}^{|k|-1} a_n^2 \quad\text{for every $k \in \Z$.}$$
\end{lemma}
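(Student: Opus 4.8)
The plan is to reduce to a normalized situation and then perform an elementary computation that isolates the only nontrivial estimate. First I would observe that the $\ell^2$-norm is invariant under translation of the index and that a direct check gives $c_k(n+k) = F(n+k)-F(n) = -c_{-k}(n)$, so that $\|c_k\|_2 = \|c_{-k}\|_2$. Hence it suffices to treat $k \geq 0$, and by shifting the index we may assume $n_0 = 0$. Writing $g(n) = F(n)-\lambda$, so that $g(n) = a_n$ for $n \geq 0$ and $g(n)=0$ for $n<0$, we have $c_k(n) = g(n)-g(n-k)$ for all $n \in \Z$.

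Next I would split the sum $\|c_k\|_2^2 = \sum_{n\in\Z}(g(n)-g(n-k))^2$ according to the signs of $n$ and of $n-k$. The terms with $n<0$ vanish; the terms with $0 \leq n < k$ each contribute $a_n^2$ (since then $n-k<0$); and the terms with $n \geq k$ contribute $(a_n - a_{n-k})^2$. After the substitution $m = n-k$ in the last range, this yields
\[
\|c_k\|_2^2 = \sum_{n=0}^{k-1} a_n^2 \;+\; \sum_{m=0}^{\infty} (a_m - a_{m+k})^2 .
\]
The lower bound $\|c_k\|_2^2 \geq \sum_{n=0}^{k-1} a_n^2$ is then immediate, as the second sum is nonnegative.

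For the upper bound I would exploit the monotonicity of $(a_n)$. Because $0 \leq a_m - a_{m+k} \leq a_m$, squaring the upper estimate gives $(a_m - a_{m+k})^2 \leq a_m(a_m - a_{m+k})$, whence
\[
\sum_{m=0}^{\infty} (a_m - a_{m+k})^2 \;\leq\; \sum_{m=0}^{\infty} a_m^2 - \sum_{m=0}^{\infty} a_m a_{m+k} .
\]
Reindexing the subtracted sum via $m \mapsto m-k$ and recombining, the right-hand side equals $\sum_{m=0}^{k-1} a_m^2 + \sum_{m\geq k} a_m(a_m - a_{m-k})$, in which every term of the last sum is $\leq 0$ since $a_m \leq a_{m-k}$. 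Therefore the tail is bounded by $\sum_{n=0}^{k-1} a_n^2$, which simultaneously shows $c_k \in \ell^2(\Z)$ and, combined with the first display, gives $\|c_k\|_2^2 \leq 2\sum_{n=0}^{k-1} a_n^2$.

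The only genuine subtlety is this last estimate, where the decreasing hypothesis is used twice: once to guarantee $a_m - a_{m+k} \geq 0$, so that squaring the bound $a_m - a_{m+k} \leq a_m$ is legitimate, and once to discard the nonpositive tail $\sum_{m\geq k} a_m(a_m - a_{m-k})$ after reindexing. Everything else is bookkeeping, so I would keep the write-up short and concentrate the detail on these two monotonicity steps.
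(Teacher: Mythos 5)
Your reduction to $k \geq 1$, $n_0 = 0$, $\lambda = 0$ and the decomposition
$$\|c_k\|_2^2 = \sum_{n=0}^{k-1} a_n^2 + \sum_{m=0}^{\infty} (a_m - a_{m+k})^2$$
are exactly the paper's starting point (the paper just keeps partial sums throughout), and your lower bound is fine. The genuine gap is in the upper bound, at the display
$$\sum_{m=0}^{\infty} (a_m - a_{m+k})^2 \;\leq\; \sum_{m=0}^{\infty} a_m^2 - \sum_{m=0}^{\infty} a_m a_{m+k} \; .$$
Nothing in the hypotheses makes $(a_n)$ square-summable, and in the situations where the lemma is applied it never is: in Corollary \ref{cor.explicit-Z} one has $a_n^2$ essentially $1/(n \log n)$, in Corollary \ref{cor.explicit-Z-power-dissipative} essentially $1/(36 n)$, and even constant sequences are allowed. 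Since $a_m a_{m+k} \geq a_{m+k}^2$, in all these cases both series on the right equal $+\infty$, so the right-hand side is the undefined expression $\infty - \infty$, and the next step~--~reindexing one divergent series and recombining it with the other~--~is not valid series arithmetic. Indeed, for a constant sequence $a_n = a_0$ the series $\sum_{m=0}^{\infty} a_m(a_m - a_{m+k})$ equals $0$, while your recombined expression $\sum_{m=0}^{k-1} a_m^2 + \sum_{m \geq k} a_m(a_m - a_{m-k})$ equals $k a_0^2$; so the claimed identity is false in general (here it errs in the harmless direction, but that is exactly what needs justification). As written, the argument therefore does not establish $c_k \in \ell^2(\Z)$, nor the upper bound, in precisely the cases that matter.

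The idea is right and the repair is mechanical; it turns your proof into essentially the paper's. Work with partial sums: for $M \geq k$ one has the legitimate finite identity
$$\sum_{m=0}^{M} a_m(a_m - a_{m+k}) = \sum_{m=0}^{k-1} a_m^2 + \sum_{m=k}^{M} a_m(a_m - a_{m-k}) - \sum_{m=M+1}^{M+k} a_{m-k} a_m \; ,$$
in which the last two sums are $\leq 0$ by monotonicity and positivity, so every partial sum is bounded by $\sum_{m=0}^{k-1} a_m^2$; letting $M \recht \infty$ bounds your series of nonnegative terms, and with the termwise estimate $(a_m - a_{m+k})^2 \leq a_m(a_m - a_{m+k})$ this yields both $c_k \in \ell^2(\Z)$ and the upper bound. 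Even simpler, weaken the termwise estimate to $(a_m - a_{m+k})^2 \leq (a_m - a_{m+k})(a_m + a_{m+k}) = a_m^2 - a_{m+k}^2$, whose partial sums telescope outright:
$$\sum_{m=0}^{M} (a_m^2 - a_{m+k}^2) = \sum_{m=0}^{k-1} a_m^2 - \sum_{m=M+1}^{M+k} a_m^2 \leq \sum_{m=0}^{k-1} a_m^2 \; .$$
This is exactly how the paper argues. With either repair your write-up is correct and, apart from the choice of termwise inequality, coincides with the paper's proof.
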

\begin{proof}
Changing $\lambda$ or $n_0$ does not change the value of $\|c_k\|_2$, so that we may assume that $\lambda = 0$ and $n_0=0$. Fix $k \geq 1$. For every $n_1 \geq k$, we have
$$\sum_{n=-\infty}^{n_1} |c_k(n)|^2  = \sum_{n=0}^{k-1} a_n^2 + \sum_{n=k}^{n_1} (a_{n-k} - a_n)^2 \; ,$$
so that $\|c_k\|_2^2 \geq \sum_{n=0}^{k-1} a_n^2$ and
\begin{align*}
\sum_{n=-\infty}^{n_1} |c_k(n)|^2 & \leq \sum_{n=0}^{k-1} a_n^2 + \sum_{n=k}^{n_1} |a_{n-k}^2 - a_n^2|
= \sum_{n=0}^{k-1} a_n^2 + \sum_{n=k}^{n_1} (a_{n-k}^2 - a_n^2) \\
& = \sum_{n=0}^{k-1} a_n^2 + \sum_{n=0}^{k-1} a_n^2 - \sum_{n=n_1-k+1}^{n_1} a_n^2  \leq 2 \sum_{n=0}^{k-1} a_n^2 \; .
\end{align*}
Since this holds for all $n_1 \geq k$, we find that $c_k \in \ell^2(\Z)$ and
$$\|c_k\|_2^2 \leq 2 \sum_{n=0}^{k-1} a_n^2 \; .$$
Since $c_0 = 0$ and $\|c_{-k}\|_2 = \|c_k\|_2$, the lemma is proven.
\end{proof}

\subsection{Proof of Corollary \ref{cor.explicit-Z}}

It suffices to note that each of the transformations $T^{a_1} \times \cdots \times T^{a_k}$ can be viewed as a Bernoulli action associated with some free action $\Z \actson I$ having finitely many orbits. Since $\sum_{n=n_0}^k (n \log(n))^{-1}$ grows like $\log(\log(k))$, it follows from Lemma \ref{lem.translate-function} that the associated $1$-cocycle $c : \Z \recht \ell^2(I)$ satisfies $\lim_{|k| \recht \infty} \|c_k\|_2^2 / \log (|k|) = 0$. By Proposition \ref{prop.Bernoulli-Z-generic}, the transformation $T^{a_1} \times \cdots \times T^{a_k}$ is ergodic and of type III$_1$, and has a weakly mixing Maharam extension.

\subsection{Proof of Corollary \ref{cor.explicit-Z-power-dissipative}}\label{sec.proof-last-cor}

By Lemma \ref{lem.translate-function}, the associated $1$-cocycle $c : \Z \recht \ell^2(\Z)$ defined by \eqref{eq.one-cocycle-c} satisfies
$$\frac{1}{36} \sum_{n=1}^{|k|} \frac{1}{n} \; \leq \; \|c_k\|_2^2 \; \leq \; \frac{1}{18} \sum_{n=1}^{|k|} \frac{1}{n}$$
so that
$$\frac{1}{36} \log(1+|k|) \; \leq \; \|c_k\|_2^2 \; \leq \; \frac{1}{18} (1 + \log |k|)$$
whenever $|k|\geq 2$. It follows that
$$\sum_{k \in \Z} \exp(-16 \|c_k\|_2^2) \geq \sum_{k=2}^\infty \exp\bigl(-\frac{16}{18}(1+\log(k))\bigr) = \exp(-8/9) \sum_{k=2}^\infty \frac{1}{k^{8/9}} = +\infty \; .$$
Since $1/3 \leq \mu_k(0) \leq 2/3$, it follows from Proposition \ref{prop.Bernoulli-Z-generic} that
$T$ is ergodic, of type III$_1$, with weakly mixing Maharam extension.

Write $m=73$. The $m$-fold power of $T$ is a Bernoulli action associated with $\Z \actson I$, where $I$ is the disjoint union of $m$ copies of $\Z$. The associated $1$-cocycle $d : \Z \recht \ell^2(I)$ satisfies $\|d_k\|_2^2 = m \, \|c_k\|_2^2$ for every $k \in \Z$. Therefore,
\begin{align*}
\sum_{k \in \Z} \exp\bigl(-\frac{1}{2} \|d_k\|_2^2\bigr) &= 1 + 2 \sum_{k=1}^\infty \exp\bigl(-\frac{m}{2} \|c_k\|_2^2\bigr) \\
& \leq 1 + 2 \sum_{k=1}^\infty \exp\bigl(-\frac{m}{72} \log(1+k)\bigr) \\
& = 1 + 2 \sum_{k=2}^\infty \frac{1}{k^{m/72}} < +\infty \; .
\end{align*}
So by Proposition \ref{prop.conservative}, the $m$-fold power of $T$ is dissipative.

\section{Nonsingular Bernoulli actions of the free groups}\label{sec.bernoulli-free-group}

Concretizing the construction in the proof of Theorem \ref{thm.positive-betti-III1} in the special case of a free product group $G = \Lambda * \Z$, we obtain the following wide range of nonsingular Bernoulli actions. As we explain in Example \ref{ex.free-groups}, this provides nonsingular Bernoulli actions of type III$_\lambda$ for any $0 < \lambda < 1$ and this provides strongly ergodic nonsingular Bernoulli actions whose orbit equivalence relation can have any prescribed Connes invariant.

\begin{proposition}\label{prop.free-group-prescribed-invariants}
Let $G = \Lambda * \Z$ be any free product of an infinite group $\Lambda$ and the group of integers $\Z$. Define $W \subset G$ as the set of reduced words whose last letter is a strictly positive element of $\Z$. Let $\mu_0$ and $\mu_1$ be Borel probability measures on a standard Borel space $X_0$. Assume that $\mu_0 \sim \mu_1$ and that $\mu_0,\mu_1$ are not supported on a single atom.

The Bernoulli action $G \actson (X,\mu)$ with $(X,\mu) = \prod_{g \in G} (X_0,\mu_g)$ and
$$\mu_g = \begin{cases} \mu_1 &\;\;\text{if $g \in W$,}\\ \mu_0 &\;\;\text{if $g \not\in W$,}\end{cases}$$
is nonsingular, essentially free, ergodic and nonamenable in the sense of Zimmer.

Denote by $T = d\mu_1 / d\mu_0$ the Radon-Nikodym derivative. Define $\tau(T)$ as the weakest topology on $\R$ that makes the map
\begin{equation}\label{eq.map-pi}
\pi : \R \recht \cU(L^\infty(X_0,\mu_0)) : \pi(t) = (x \mapsto T(x)^{it})
\end{equation}
continuous, where $\cU(L^\infty(X_0,\mu_0))$ is equipped with the strong topology. We say that $T$ is almost periodic if there exists a countable subset $S \subset \R_*^+$ such that $T(x) \in S$ for a.e.\ $x \in X_0$. In that case, we denote by $\Sd(T)$ the subgroup of $\R_*^+$ generated by the smallest such $S \subset \R_*^+$.

\begin{enumlist}
\item The type of $G \actson (X,\mu)$ is determined as follows: the action is of type II$_1$ if and only if $T(x)=1$ for a.e.\ $x \in X_0$; the action is of type III$_\lambda$ with $0 < \lambda < 1$ if and only if the essential range of $T$ generates the subgroup $\lambda^\Z < \R_*^+$; and the action is of type III$_1$ if and only if the essential range of $T$ generates a dense subgroup of $\R_*^+$.

\item If $\Lambda$ is nonamenable, the action $G \actson (X,\mu)$ is strongly ergodic (in the sense of \cite{Sc79}). Then, the $\tau$-invariant of the orbit equivalence relation $\cR$ of $G \actson (X,\mu)$ (in the sense of \cite[Definition 2.6]{HMV17}) equals $\tau(T)$. In particular, $\cR$ is almost periodic (in the sense of \cite[Section 5]{HMV17}) if and only if $T$ is almost periodic and in that case, $\Sd(\cR) = \Sd(T)$.

\item If $\Lambda$ has infinite conjugacy classes and is non inner amenable, then the crossed product factor $M = L^\infty(X,\mu) \rtimes G$ is full and its $\tau$-invariant (in the sense of \cite{Co74}) equals $\tau(T)$. Also, $M$ is almost periodic (in the sense of \cite{Co74}) if and only if $T$ is almost periodic and in that case, $\Sd(M) = \Sd(T)$.
\end{enumlist}
\end{proposition}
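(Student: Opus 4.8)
The plan is to extract everything from one combinatorial observation: $\Lambda$ stabilises $W$ under left translation. Since membership of a reduced word in $W$ depends only on its rightmost letter, and left multiplication by $g$ alters the rightmost letter only through a cancellation of length at most $|g|$, I would first record the combinatorial facts that $g^{-1}W\triangle W$ is finite for every $g\in G$ and that $hW=W$ for every $h\in\Lambda$. The first, together with $\mu_0\sim\mu_1$ and Kakutani's criterion, gives nonsingularity; essential freeness is automatic because $\mu_0,\mu_1$ are not carried by a single atom. The second shows that the restricted action $\Lambda\actson(X,\mu)$ is a genuine probability measure preserving Bernoulli action (the base measure is constant along each $\Lambda$-orbit in $G$), hence mixing and, being pmp, conservative; a fortiori $G\actson(X,\mu)$ is conservative. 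For later use I would compute the exact value $\int_X\sqrt{\om(g,x)}\,d\mu(x)=\rho^{N(g)}$, where $N(g)=|g^{-1}W\triangle W|$ and $\rho=\int_{X_0}\sqrt{d\mu_0\,d\mu_1}\in(0,1]$, since off the finite set $g^{-1}W\triangle W$ each factor $\mu_{gk}=\mu_k$ contributes $1$.

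For nonamenability in the sense of Zimmer I would invoke Proposition \ref{prop.criterion-nonamenable}, the point being to produce, for every infinite $\Lambda$, finite sets on which the cocycle is uniformly small. Fixing distinct $a_1,\dots,a_n\in\Lambda$ and setting $s_i=a_i t a_i^{-1}$, the identity $hW=W$ gives $N(s_i)=N(t)=1$, so $\int_X\sqrt{\om(s_i,x)}\,d\mu=\rho$ for each $i$. The $s_i$ freely generate a free subgroup of $G$ (reduced words in the $a_i t a_i^{-1}$ never collapse), so in $L(G)$ they are free Haar unitaries and $\bigl\|\sum_{i=1}^n\lambda_{s_i}\bigr\|=2\sqrt{n-1}$. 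As $n\rho>2\sqrt{n-1}$ for $n$ large, the criterion applies and the action is nonamenable; note this argument needs only that $\Lambda$ be infinite.

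Statement (1) I would obtain by the Maharam-extension argument of Lemma \ref{lem.construction}. Fix a pmp action $G\actson(Y,\eta)$ and a $G$-invariant $Q\in L^\infty(Y\times X\times\R)$. Because $\om(h,\cdot)\equiv1$ for $h\in\Lambda$, the subgroup $\Lambda$ acts trivially on the $\R$-coordinate, and weak mixing of the pmp Bernoulli action $\Lambda\actson X$ forces $Q(y,x,s)=P(y,s)$. Invariance under the generator $t$ reads $P(t\cdot y,\,s+\log\om(t,x))=P(y,s)$, and the exact formula $\om(t,x)=T(x_e)$ (only the coordinate $e\notin W$, with $t\cdot e=t\in W$, survives in $t^{-1}W\triangle W=\{e\}$, and $x_e$ is distributed as $\mu_0$) shows, exactly as in Lemma \ref{lem.construction}, that the associated flow is invariant under the closed subgroup of $\R$ generated by $\{\log\xi:\xi\text{ in the essential range of }T\}$. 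Taking $Y$ a point yields ergodicity and the type, and general $Y$ yields the weakly mixing Maharam extension in the type III$_1$ case: the invariance subgroup is $\{0\}$, $\log(\lambda)\,\Z$, or all of $\R$ precisely according to whether the essential range of $T$ generates the trivial group, $\lambda^\Z$, or a dense subgroup of $\R_*^+$, giving types II$_1$, III$_\lambda$, III$_1$; the converses are immediate since every $\om(g,\cdot)$ takes values in the group generated by the essential range of $T$, so this group is exactly the ratio set of $\cR$.

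Statements (2) and (3) are where I expect the real work to be. When $\Lambda$ is nonamenable the pmp Bernoulli action $\Lambda\actson X$ has spectral gap, hence is strongly ergodic, and restricting a $G$-asymptotically invariant sequence to $\Lambda$ shows $G\actson(X,\mu)$ is strongly ergodic in the sense of \cite{Sc79}. For the $\tau$-invariant of $\cR$ I would use the strong-ergodicity description of \cite{HMV17} and argue that the continuity of the relevant one-parameter family is governed by the single ``new'' Radon-Nikodym factor $\om(t,x)=T(x_e)$, so that it coincides with the weakest topology making $\pi(t)=T^{it}$ strongly continuous; this gives $\tau(\cR)=\tau(T)$, and almost periodicity together with $\Sd(\cR)=\Sd(T)$ follows from the same identification. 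For (3), fullness of $M=L^\infty(X)\rtimes G$ would be reduced to controlling two sources of central sequences: those in $L^\infty(X)$, killed by strong ergodicity, and those in $L(G)$, killed because $G=\Lambda*\Z$ is ICC and non inner amenable (inherited from $\Lambda$). Once $M$ is full, Connes' invariant $\tau(M)$ is the topology induced by $t\mapsto\sigma_t^\varphi\in\operatorname{Out}(M)$, implemented by the same modular data and therefore again equal to $\tau(T)$, with the parallel statements for almost periodicity and $\Sd(M)$. The hard part is precisely this last translation between the relation, the factor, and the explicit cocycle inside the \cite{HMV17} and \cite{Co74} frameworks, and the verification of fullness; by comparison, the measure-theoretic skeleton of (1) and the nonamenability argument are routine given the earlier results.
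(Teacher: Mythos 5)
Your handling of nonsingularity, essential freeness, statement (1), and the strong ergodicity part of statement (2) is correct and is essentially the paper's own argument: everything rests on $\Lambda W = W$ (so that $\Lambda \actson (X,\mu)$ is a pmp Bernoulli action and $\om(h,\cdot) \equiv 1$ for $h \in \Lambda$) and on $a^{-1}W \triangle W = \{e\}$ (so that $\om(a,x) = T(x_e)$), after which every $G$-invariant $Q \in L^\infty(X \times \R)$ is of the form $P(s)$ with $P$ invariant under the subgroup generated by the essential range of $\log T$. Your proof of Zimmer nonamenability, however, is genuinely different from the paper's, and it works: one checks (as you indicate) that $s_i^{-1}W \triangle W = \{a_i\}$ for $s_i = a_i t a_i^{-1}$, so that each $\int_X \sqrt{\om(s_i,x)}\, d\mu(x)$ equals $\rho = \int_{X_0}\sqrt{d\mu_0\, d\mu_1} > 0$, and the $s_i$ are indeed free in $G$, so that $\bigl\| \sum_{i=1}^n \lambda_{s_i} \bigr\| = 2\sqrt{n-1}$ and Proposition \ref{prop.criterion-nonamenable} applies once $n \rho > 2\sqrt{n-1}$. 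Two remarks: this norm identity for $n$ free Haar unitaries is the Akemann--Ostrand computation, not Kesten's theorem \cite{Ke58}, so it should be cited as such; and the paper instead forms $\Lambda_1 = \langle \Lambda, a\Lambda a^{-1}\rangle$, which is the free product $\Lambda * a\Lambda a^{-1}$ of two groups each acting conservatively (in fact pmp), and invokes \cite[Corollary F]{HV12}. Your route has the advantage of staying entirely inside the paper's own toolkit; both need only that $\Lambda$ is infinite.

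The genuine gap is in statement (3). The reduction you propose~--~that asymptotically central sequences of $M = L^\infty(X) \rtimes G$ come either from $L^\infty(X)$ (killed by strong ergodicity) or from $L(G)$ (killed by ICC and non inner amenability of $G$)~--~is not a valid reduction: a central sequence in a crossed product need not lie close to either subalgebra, and this dichotomy is blind to the ``mixed'' part of $L^2(M)$ spanned by the vectors $u_g F$ with $g \neq e$ and $\int_X F \, d\mu = 0$. The paper proves instead one spectral gap statement covering all of $L^2(M,\vphi) \ominus \C 1$ at once: the conjugation representation $\theta(g) : d \mapsto u_g d u_g^*$ of the subgroup $\Lambda$ does not weakly contain the trivial representation of $\Lambda$. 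Note that conjugation by $u_g$ is a unitary on $L^2(M,\vphi)$ only for $g \in \Lambda$, because only $\Lambda$ preserves $\mu$ and hence $\vphi$; this is why one cannot run such an argument with $G$ in place of $\Lambda$, and why your parenthetical attribution of the hypotheses to $G$ (``inherited from $\Lambda$'') points the wrong way. The paper's decomposition has three pieces: the closed span of $\{u_g F \mid g \in G, \int F \, d\mu = 0\}$, on which $\theta$ is a multiple of the regular representation because $\Lambda \actson X$ is pmp Bernoulli; the closed span of $\{u_g \mid g \in G \setminus \Lambda\}$, again a multiple of the regular representation by the free product structure; and the closed span of $\{u_g \mid g \in \Lambda \setminus \{e\}\}$, where weak containment of the trivial representation is excluded precisely because $\Lambda$ is ICC and non inner amenable~--~this third piece is where the hypotheses of (3) actually enter. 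This single statement yields simultaneously the fullness of $M$ and the identification of $\tau(M)$ with $\tau(T)$; without it, statement (3) is unproven, as you yourself concede. By contrast, for the $\tau$-invariant of $\cR$ in statement (2) you and the paper are on comparable footing, since the paper also defers there, to the argument of \cite[Theorem 6.4]{HMV17}.
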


For a Bernoulli action $G \actson (X,\mu)$ as in Proposition \ref{prop.free-group-prescribed-invariants}, the weak mixing of the Maharam extension and the stable type, i.e.\ the type of a diagonal action $G \actson (Y \times X,\eta \times \mu)$ given a pmp action $G \actson (Y,\eta)$, are discussed in Proposition \ref{prop.stable-type-concrete-examples} below.

Before proving Proposition \ref{prop.free-group-prescribed-invariants}, we provide the following concrete examples.

\begin{example}\label{ex.free-groups}
We use the same notations as in the formulation of Proposition \ref{prop.free-group-prescribed-invariants}.
\begin{enumlist}
\item Take $0 < \lambda < 1$ and put $X_0 = \{0,1\}$ with $\mu_0(0) = (1+\lambda)^{-1}$ and $\mu_1(0) = \lambda (1+\lambda)^{-1}$. It follows that $G \actson (X,\mu)$ is of type III$_\lambda$. So all free product groups $G = \Lambda * \Z$ with $\Lambda$ infinite admit nonsingular, essentially free, ergodic Bernoulli actions of type III$_\lambda$. Note that by \cite[Corollary 3.3]{DL16}, the group $\Z$ does not admit nonsingular Bernoulli actions of type III$_\lambda$, at least under the assumption that all $\mu_n$, $n < 0$, are identical.

\item Using the construction of \cite[Section 5]{Co74}, we obtain the following examples of strongly ergodic, nonsingular Bernoulli actions whose orbit equivalence relation has an arbitrary countable dense subgroup of $\R_*^+$ as $\Sd$-invariant or has any topology coming from a unitary representation of $\R$ as $\tau$-invariant. This holds for any free product group $G = \Lambda * \Z$ with $\Lambda$ nonamenable, and in particular for any free group $\F_n$ with $3 \leq n \leq +\infty$. So this provides an answer to \cite[Problem 3]{HMV17}.

    Let $\eta$ be any nonzero finite Borel measure on $\R_*^+$ with $\int_{\R_*^+} x \, d\eta(x) < \infty$. Define $X_0 = \R_*^+ \times \{0,1\}$ and define the probability measures $\mu_0$ and $\mu_1$ on $X_0$ determined by
    \begin{align*}
    \kappa &= \int_{\R_*^+} (1+x) \, d\eta(x) \; ,\\
    \int_{X_0} F \, d\mu_0 &= \kappa^{-1} \int_{\R_*^+} \bigl( F(x,0)+ xF(x,1) \bigr) \, d\eta(x) \; , \\
    \int_{X_0} F \, d\mu_1 &= \kappa^{-1} \int_{\R_*^+} \bigl( x F(x,0)+ F(x,1) \bigr) \, d\eta(x) \; ,
    \end{align*}
    for all positive Borel functions $F$ on $X_0$. Then, $\mu_0 \sim \mu_1$ and the Radon-Nikodym derivative $T = d\mu_1 / d\mu_0$ is given by $T(x,0) = x$ and $T(x,1) = 1/x$ for all $x \in \R_*^+$.

    So when $\Lambda$ is nonamenable, the nonsingular Bernoulli action associated with $\mu_0,\mu_1$ in Proposition \ref{prop.free-group-prescribed-invariants} is strongly ergodic and the $\tau$-invariant of the orbit equivalence relation is the weakest topology on $\R$ that makes the map
    $$\R \recht \cU(L^\infty(\R_*^+,\eta)) : t \mapsto (x \mapsto x^{it})$$
    continuous. Varying $\eta$, it follows that any topology on $\R$ induced by a unitary representation of $\R$ arises as the $\tau$-invariant of the orbit equivalence relation of a strongly ergodic, nonsingular Bernoulli actions of a free product $G = \Lambda * \Z$ with $\Lambda$ nonamenable.

    In particular, taking an atomic measure $\eta$, we obtain strongly ergodic, nonsingular Bernoulli actions of $G = \Lambda * \Z$ with any prescribed $\Sd$-invariant. More concretely, when $S < \R_*^+$ is a given countable dense subgroup, we enumerate $S \cap (0,1) = \{t_n \mid n \geq 1\}$ and define the finite atomic measure $\eta$ on $\R_*^+$ given by
    $$\eta = \sum_{n=1}^\infty \frac{1}{2^n (1 + t_n)} \, \delta_{t_n} \; .$$
    The orbit equivalence relation of $G \actson (X,\mu)$ is then almost periodic with $\Sd$-invariant equal to $S$.
\end{enumlist}
\end{example}

\begin{proof}[{Proof of Proposition \ref{prop.free-group-prescribed-invariants}}]
Since $\Lambda W = W$, the action $\Lambda \actson (X,\mu)$ is a probability measure preserving Bernoulli action. Denote by $a \in \Z$ the generator $a=1$. The measure $a^{-1} \cdot \mu$ given by $(a^{-1} \cdot \mu)(\cU) = \mu(a \cdot \cU)$ equals the product measure
$$a^{-1} \cdot \mu = \prod_{g \in G} \mu_{ag} \; .$$
Since $a^{-1} W \triangle W = \{e\}$, we get that $a^{-1} \cdot \mu \sim \mu$ and that
$$\frac{d(a^{-1} \cdot \mu)}{d\mu}(x) = \frac{d\mu_1}{d\mu_0}(x_e) \; .$$
So, $a$ acts nonsingularly on $(X,\mu)$ and the Radon-Nikodym cocycle is given by $\om(a,x) = T(x_e)$. It follows that $G \actson (X,\mu)$ is nonsingular and essentially free.

To prove the ergodicity and to determine the type of $G \actson (X,\mu)$, consider the Maharam extension $G \actson (X \times \R, \mu \times \nu)$ given by \eqref{eq.maharam}. Let $Q \in L^\infty(X \times \R)$ be a $G$-invariant function. Since $\Lambda \actson (X,\mu)$ is measure preserving and ergodic, it follows that $Q(x,s) = P(s)$, where $P \in L^\infty(\R)$ is invariant under translation by $t$ for every $t$ in the essential range of one of the maps $x \mapsto \log(\om(g,x))$, $g \in G$. The union of these essential ranges equals the subgroup of $\R_*^+$ generated by the essential range of $T$. So our statements about the ergodicity and the type of $G \actson (X,\mu)$ follow.

To prove that $G \actson (X,\mu)$ is nonamenable in the sense of Zimmer, denote by $\Lambda_1 < G$ the subgroup generated by $\Lambda$ and $a \Lambda a^{-1}$. Note that $\Lambda_1$ is the free product of these two subgroups. Both $\Lambda$ and $a \Lambda a^{-1}$ act on $(X,\mu)$ as a probability measure preserving Bernoulli action, although they do not preserve the same probability measure. In particular, the actions of $\Lambda$ and $a \Lambda a^{-1}$ on $(X,\mu)$ are conservative. Since the action of their free product $\Lambda_1$ is essentially free, it follows from \cite[Corollary F]{HV12} that $\Lambda_1 \actson (X,\mu)$ is nonamenable in the sense of Zimmer. A fortiori, $G \actson (X,\mu)$ is nonamenable.

Now assume that $\Lambda$ is nonamenable. Since $\Lambda \actson (X,\mu)$ is a probability measure preserving Bernoulli action, the action $\Lambda \actson (X,\mu)$ is strongly ergodic. A fortiori, $G \actson (X,\mu)$ is strongly ergodic. The same argument as in \cite[Theorem 6.4]{HMV17} gives us that the $\tau$-invariant of the orbit equivalence relation $\cR(G \actson (X,\mu))$ is the weakest topology on $\R$ that makes the map in \eqref{eq.map-pi} continuous.

Finally assume that $\Lambda$ has infinite conjugacy classes and that $\Lambda$ is non inner amenable. Denote by $(u_g)_{g \in G}$ the canonical unitary operators in $M = L^\infty(X) \rtimes G$ and denote by $\vphi$ the canonical faithful normal state on $M$ given by $\vphi(F) = \int_X F(x) d\mu(x)$ and $\vphi(F u_g) = 0$ for all $F \in L^\infty(X)$ and $g \in G \setminus \{e\}$. Denote by $\cH$ the Hilbert space completion of $M$ w.r.t.\ the scalar product given by $\langle c,d \rangle = \vphi(d^* c)$ for all $c,d \in M$. View $M \subset \cH$. Since the action $\Lambda \actson (X,\mu)$ is measure preserving, both left and right multiplication by $u_g$, $g \in \Lambda$, defines a unitary operator on $\cH$. To prove that the factor $M$ is full and that the same topology as above is the $\tau$-invariant of $M$, it suffices to prove that the unitary representation
$$\theta : \Lambda \recht \cU(\cH \ominus \C 1) : (\theta(g))(d) = u_g d u_g^*$$
does not weakly contain the trivial representation of $\Lambda$.

But $\theta$ is the direct sum of the subrepresentations $\theta_i$ on $\cH_i$ where $\cH_1$ is the closed linear span of $\{u_g F \mid g \in G, \int_X F \, d\mu = 0\}$, where $\cH_2$ is the closed linear span of $\{ u_g \mid g \in G \setminus \Lambda\}$, and where $\cH_3$ is the closed linear span of $\{ u_g \mid g \in \Lambda \setminus \{e\}\}$. Because $\Lambda \actson (X,\mu)$ is a probability measure preserving Bernoulli action, the representation $\theta_1$ is a multiple of the regular representation of $\Lambda$. Since $G$ is the free product of $\Lambda$ and $\Z$, also $\theta_2$ is a multiple of the regular representation of $\Lambda$. Since $\Lambda$ is nonamenable, $\theta_1$ and $\theta_2$ do not weakly contain the trivial representation of $\Lambda$. Finally, $\theta_3$ does not weakly contain the trivial representation of $\Lambda$ because $\Lambda$ has infinite conjugacy classes and $\Lambda$ is not inner amenable.
\end{proof}

\begin{proposition}\label{prop.stable-type-concrete-examples}
Let $G = \Lambda * \Z$ be any free product of an infinite group $\Lambda$ and the group of integers $\Z$. Let $G \actson (X,\mu)$ be a Bernoulli action as in Proposition \ref{prop.free-group-prescribed-invariants}. Choose an ergodic pmp action $G \actson (Y,\eta)$. Then, the diagonal action $G \actson Y \times X$ is ergodic and its type is determined as follows.

Using the same notations as in Proposition \ref{prop.free-group-prescribed-invariants}, denote $T = d\mu_1 / d\mu_0$. Denote by $L < \R$ the subgroup generated by the essential range of the map $X_0 \times X_0 \recht \R : (x,x') \mapsto \log(T(x)) - \log(T(x'))$.

\begin{enumlist}
\item\label{point.one} If $L = \{0\}$, then $\mu$ is $G$-invariant and the actions $G \actson X$ and $G \actson Y \times X$ are of type II$_1$.

\item\label{point.two} If $L < \R$ is dense, then the Maharam extension of $G \actson (X,\mu)$ is weakly mixing and the diagonal action $G \actson Y \times X$ is of type III$_1$.

\item\label{point.three} If $L = a \Z$, take the unique $b \in [0,a)$ such that $\log(T(x)) \in b + a \Z$ for a.e.\ $x \in X_0$. Denote by $\pi : G \recht \Z$ the unique homomorphism given by $\pi(g) = 0$ if $g \in \Lambda$ and $\pi(n) = n$ if $n \in \Z$. The set
    \begin{equation}\label{eq.my-subgroup-with-k0}
    \begin{split}
    H = \{k \in \Z \mid \; &\text{there exists a Borel map $V : Y \recht \R/a\Z$ s.t.}\\ & \text{$V(g \cdot y) = V(y) + k \pi(g) b$ for all $g \in G$ and a.e.\ $y \in Y$}\}
    \end{split}
    \end{equation}
    is a subgroup of $\Z$. Write $H = k_0 \Z$ with $k_0 \geq 0$.
    If $k_0 = 0$, the action $G \actson Y \times X$ is of type III$_1$. If $k_0 \geq 1$, the action $G \actson Y \times X$ is of type III$_\lambda$ with $\lambda = \exp(-a/k_0)$.

\item\label{point.four} If $L = a \Z$ and $b \in [0,a)$ is defined as in \ref{point.three}, then the following holds.
\begin{itemlist}
\item If $b$ is of finite order $k_1 \geq 1$ in $\R/a\Z$ (with the convention that $k_1=1$ if $b=0$), varying the action $G \actson (Y,\eta)$, the possible types of $G \actson Y \times X$ are III$_\lambda$ with $\lambda = \exp(-a/k_0)$ where $k_0 \geq 1$ is an integer dividing $k_1$. Given such a $k_0$, this type is realized by taking the transitive action of $G$ on $Y = \Z / (k_1/k_0)\Z$ given by $g \cdot y = y + \pi(g)$, or any other pmp action $G \actson Y$ that is induced from a weakly mixing pmp action of the finite index normal subgroup $\pi^{-1}((k_1/k_0)\Z) < G$.

\item If $b$ is of infinite order in $\R/a\Z$, varying the action $G \actson (Y,\eta)$, the possible types of $G \actson Y \times X$ are III$_1$ and III$_\lambda$ with $\lambda = \exp(-a/k_0)$ where $k_0 \geq 1$ is any integer. Given $k_0$, the latter is realized by taking $Y = \R/(a/k_0)\Z$ and $g \cdot y = y + \pi(g)b$, while the former is realized by taking $G \actson (Y,\eta)$ to be the trivial action, or any other weakly mixing action.
\end{itemlist}
\end{enumlist}
\end{proposition}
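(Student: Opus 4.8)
The plan is to compute, for an arbitrary ergodic pmp action $G \actson (Y,\eta)$, the algebra $L^\infty(Y \times X \times \R)^G$ of invariants for the diagonal of $G \actson (Y,\eta)$ and the Maharam extension $G \actson (X \times \R,\mu \times \nu)$, and then to read off the type from the residual $\R$-translation action on this algebra as recalled in the preliminaries. Throughout I write $a \in \Z$ for the generator $a=1$ and $r = \log T$, and I use the two facts established in the proof of Proposition~\ref{prop.free-group-prescribed-invariants}: the restriction $\Lambda \actson (X,\mu)$ is a weakly mixing pmp Bernoulli action, so that $\om(h,x)=1$ for all $h \in \Lambda$, and $\om(a,x)=T(x_e)$. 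Since $\Lambda \actson X$ is weakly mixing and $\Lambda$ acts trivially on the $\R$-coordinate, any $G$-invariant $Q \in L^\infty(Y \times X \times \R)$ satisfies $Q(y,x,s)=P(y,s)$ with $P(\cdot,s) \in L^\infty(Y)^\Lambda$ for a.e.\ $s$. Invariance under $a$ then reads $P(a \cdot y, s + r(x_e)) = P(y,s)$ with $x_e$ being $\mu_0$-distributed, and comparing this identity for two independent copies of $x_e$ (the same doubling argument as in the proof of Lemma~\ref{lem.construction}) shows that $P$ is invariant under translating $s$ by every element of the essential range of $(x,x') \mapsto r(x)-r(x')$, hence by all of $L$. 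Ergodicity of $G \actson Y \times X$ is the same computation without the $\R$-coordinate: weak mixing of $\Lambda \actson X$ forces an invariant function onto $Y$, and $G$-invariance plus ergodicity of $G \actson Y$ makes it constant.

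For \ref{point.one}, $L=\{0\}$ forces $T$ to be a.e.\ constant, and since $\int T \, d\mu_0 = 1$ this constant is $1$, so $\mu_0=\mu_1$, the measure $\mu$ is $G$-invariant, and the weakly mixing Bernoulli action is of type II$_1$. For \ref{point.two}, density of $L$ makes $P$ independent of $s$; the $a$-relation then gives $P(a \cdot y)=P(y)$, so $P$ is $G$-invariant on $Y$ and hence constant, yielding $L^\infty(Y \times X \times \R)^G = \C 1$ for every ergodic $Y$. This is exactly weak mixing of the Maharam extension and type III$_1$.

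The heart is \ref{point.three}, where $L=a\Z$ and $r(x) \in b+a\Z$ a.e. Here $P$ descends to $Y \times \R/a\Z$, and expanding in the modes $e^{2\pi i n s/a}$ turns the relation $P(a\cdot y,\bar s+\bar b)=P(y,\bar s)$ into $P_n(a \cdot y)=e^{-2\pi i n b/a}P_n(y)$, with each $P_n$ being $\Lambda$-invariant on $Y$. Normalising $|P_n|$ to a constant via ergodicity of $G \actson Y$, a nonzero solution $P_n$ exists precisely when the associated character can be trivialised by a Borel map $V:Y \recht \R/a\Z$, i.e.\ precisely when $n \in H$; moreover such a $P_n$ is unique up to scalar with $P_{mk_0}$ proportional to $P_{k_0}^m$. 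Writing $H=k_0\Z$, the invariant algebra is therefore $\C 1$ when $k_0=0$ (type III$_1$), while for $k_0 \geq 1$ it is generated by the single unitary $u(y,s)=P_{k_0}(y)\,e^{2\pi i k_0 s/a}$ on which $\R$-translation acts by $u \mapsto e^{2\pi i k_0 t/a}u$; this is the ergodic translation on $\R/(a/k_0)\Z$, giving type III$_\lambda$ with $\lambda=\exp(-a/k_0)$.

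Finally, for \ref{point.four} I would compute $H$ for the explicit actions. When $b$ has finite order $k_1$, the constant $V=0$ shows $k_1 \in H$, so $k_0 \mid k_1$; for $Y=\Z/(k_1/k_0)\Z$ the map $V(y)=kby$ is well defined exactly when $k_0 \mid k$, so $H=k_0\Z$, and the induced-action case follows by the same bookkeeping after inducing from $\pi^{-1}((k_1/k_0)\Z)$. When $b$ has infinite order, a trivial or weakly mixing $Y$ forces $H=\{0\}$, since a nonzero $V$ would make $\exp(2\pi i V/a)$ a nonconstant eigenfunction with nontrivial eigenvalue $\exp(2\pi i kb/a)$, contradicting weak mixing; and for $Y=\R/(a/k_0)\Z$ with $g \cdot y = y+\pi(g)b$ (ergodic because $b/a \notin \mathbb{Q}$) the map $V(y)=k_0 y$ gives $k_0 \in H$. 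The main obstacle I anticipate is the \emph{exactness} of these computations of $H$: establishing the reverse inclusions $H \subseteq k_0\Z$ for the realising actions and carrying the argument through the induced-action generalisation, together with confirming in \ref{point.three} that the invariant algebra is genuinely $L^\infty(\R/(a/k_0)\Z)$ with ergodic translation rather than a strictly larger $\R$-space.
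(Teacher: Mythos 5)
Your proposal is correct and follows essentially the same route as the paper: reduce $G$-invariant functions to $L^\infty(Y\times\R)$ using that $\Lambda \actson (X,\mu)$ is a weakly mixing pmp Bernoulli action, deduce invariance under translation by $L$, and in the case $L = a\Z$ perform the Fourier decomposition over $\R/a\Z$ so that the modes become $\Lambda$-invariant eigenfunctions classified exactly by the subgroup $H$, with the explicit computations of $H$ for the realizing actions in point~4. The obstacles you flag at the end are not genuine gaps: ergodicity of $G \actson Y$ makes each nonzero mode $P_n$ unimodular (after normalization) and unique up to a scalar, so the invariant algebra is precisely the closed span of the powers of the single unitary $u$, i.e.\ $L^\infty(\R/(a/k_0)\Z)$ with the rotation action of $\R$, and the induced-action case is handled by the same bookkeeping (any $V$ is $\pi^{-1}(k_1\Z)$-invariant, hence $G_0$-invariant by weak mixing of $G_0 \actson Y_0$, which reduces to your transitive computation).
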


%
%
%
%
%

By varying the probability measures $\mu_0$ and $\mu_1$ in the construction of Proposition \ref{prop.free-group-prescribed-invariants}, all values of $0 \leq b < a$ in Proposition \ref{prop.stable-type-concrete-examples} occur; see Example \ref{ex.all-values-a-b}.

\begin{proof}
Fix $G \actson (X,\mu)$ as in Proposition \ref{prop.free-group-prescribed-invariants} and fix an arbitrary ergodic pmp action $G \actson (Y,\eta)$. Since $\Lambda \actson (X,\mu)$ is a pmp Bernoulli action, a $\Lambda$-invariant element of $L^\infty(Y \times X)$ belongs to $L^\infty(Y) \ot 1$. It follows that $G \actson Y \times X$ is ergodic.

Define $L < \R$ as in the formulation of the proposition. If $L = \{0\}$, we have that $T$ is constant a.e. Since $\int_{X_0} T(x) d\mu_0(x) = 1$, this constant must be $1$. So, $T(x) = 1$ for a.e.\ $x \in X_0$. This means that $\mu_0 = \mu_1$, so that $G \actson (X,\mu)$ is a pmp Bernoulli action. This proves point \ref{point.one}.

To prove the remaining points of the proposition, let $Q \in L^\infty(Y \times X \times \R)$ be a $G$-invariant element for the diagonal action of $G \actson Y$ and the Maharam extension $G \actson X \times \R$ of $G \actson X$. A fortiori, $Q$ is $\Lambda$-invariant. Since $\Lambda \actson (X,\mu)$ is a pmp Bernoulli action, it follows that $Q \in L^\infty(Y) \ovt 1 \ovt L^\infty(\R)$.

As in the proof of Theorem \ref{thm.positive-betti-III1}, it follows that $Q(y,x,s) = P(y,s)$, where $P \in L^\infty(Y \times \R)$ satisfies
\begin{equation}\label{eq.my-inv-Q}
P(g \cdot y, s+\log(\om(g,x))) = P(y,s) \quad\text{for all $g \in G$ and a.e.\ $(y,x,s) \in Y \times X \times \R$.}
\end{equation}
Note that $L$ equals the subgroup of $\R$ generated by the essential ranges of the maps
$$X \times X \recht \R : (x,x') \mapsto \log(\om(g,x)) - \log(\om(g,x')) \; , \; g \in G \; .$$
It then follows from \eqref{eq.my-inv-Q} that $P(y,s+t) = P(y,s)$ for all $t \in L$ and a.e.\ $(y,s) \in Y \times \R$.

If $L < \R$ is dense, we conclude that $Q \in L^\infty(Y) \ot 1 \ot 1$ and thus, by ergodicity of $G \actson Y$, that $Q$ is constant a.e., so that $G \actson Y \times X \times \R$ is ergodic. This means that $G \actson Y \times X$ is of type III$_1$. Since $G \actson (Y,\eta)$ was an arbitrary ergodic pmp action, it follows that the Maharam extension $G \actson X \times \R$ is weakly mixing. This proves point \ref{point.two}.

Next assume that $L = a \Z$ with $a > 0$ and take the unique $0 \leq b < a$ such that $\log(T(x)) \in b + a \Z$ for a.e.\ $x \in X_0$. Denote by $\pi : G \recht \Z$ the unique homomorphism given by $\pi(g) = 0$ if $g \in \Lambda$ and $\pi(n) = n$ if $n \in \Z$. Since $\om(g,x) = 1$ for all $g \in \Lambda$ and $\om(1,x) = T(x_e)$, it follows that $\log(\om(g,x)) \in \pi(g) b + a\Z$ for all $g \in G$ and a.e.\ $x \in X$. We conclude that an element $Q \in L^\infty(Y \times X \times \R)$ is $G$-invariant if and only if $G(y,x,s) = P(y,s)$ where $P \in L^\infty(Y \times \R/a\Z)$ is invariant under the action $G \actson Y \times \R/a\Z$ given by $g \cdot (y,s) = (g \cdot y, \pi(g)b + s)$.

If $k \in \Z$ and $V : Y \recht \R / a \Z$ is a Borel map satisfying $V(g \cdot y) = V(y) + k \pi(g) b$ for all $g \in G$ and a.e.\ $y \in Y$, the map $P(y,s) = \exp(2\pi i (V(y) - ks)/a)$ is $G$-invariant. Using a Fourier decomposition for $\R / a \Z \cong \widehat{\Z}$, it follows that these functions $P$ densely span the space of all $G$-invariant functions in $L^2(Y \times \R/a \Z)$. Define $H < \Z$ as in \eqref{eq.my-subgroup-with-k0}. If $H = \{0\}$, it follows that $L^\infty(Y \times X \times \R)^G = \C 1$ and that $G \actson Y \times X$ is of type III$_1$. When $H = k_0 \Z$ with $k_0 \geq 1$, we identified $L^\infty(Y \times X \times \R)^G$ with $L^\infty(\R/(a/k_0)\Z)$ and it follows that $G \actson Y \times X$ is of type III$_\lambda$ with $\lambda = \exp(-a/k_0)$. This concludes the proof of point \ref{point.three}.

To prove point \ref{point.four}, first assume that $b$ is of finite order $k_1$ in $\R / a \Z$. Using the map $V(y) = 0$ for all $y \in Y$, it follows that $k_1$ belongs to the subgroup $H < \Z$ defined in \eqref{eq.my-subgroup-with-k0}. Therefore, $k_0$ must divide $k_1$. Conversely, assume that $k_0 \geq 1$ divides $k_1$ and that $G \actson Y$ is induced from a weakly mixing pmp action of $G_0 := \pi^{-1}((k_1/k_0)\Z)$ on $Y_0$. Denote by $H < \Z$ the subgroup defined in \eqref{eq.my-subgroup-with-k0}. We have to prove that $H = k_0 \Z$. If $k \in \Z$ and $V : Y \recht \R / a \Z$ is a Borel function satisfying $V(g \cdot y) = V(y) + k \pi(g) b$, it follows that $V$ is invariant under $\pi^{-1}(k_1 \Z)$. Since $G_0 \actson Y_0$ is weakly mixing, $G_0$ is normal in $G$ and $\pi^{-1}(k_1 \Z) < G_0$ has finite index, it follows that $V$ is $G_0$-invariant. This forces $k$ to be a multiple of $k_0$. So, $H \subset k_0 \Z$. By construction of the induced action, there is a Borel map $W : Y \recht G/G_0$ satisfying $W(g \cdot y) = g W(y)$. Identifying $G/G_0$ with $\Z / ((k_1/k_0)\Z)$ through $\pi$ and composing $W$ with the map
$$\Z / ((k_1/k_0)\Z) \recht \R / a \Z : n \mapsto k_0 n b \; ,$$
we have found a Borel map $V : Y \recht \R / a \Z$ satisfying $V(g \cdot y) = V(y) + k_0 \pi(g) b$. So, $k_0 \in H$ and the equality $H = k_0 \Z$ follows. By point \ref{point.three}, the action $G \actson Y \times X$ is of type III$_\lambda$ with $\lambda = \exp(-a / k_0)$.

Finally assume that $b$ is of infinite order in $\R/ a\Z$. When $G \actson (Y,\eta)$ is weakly mixing, the subgroup of $H < \Z$ defined in \eqref{eq.my-subgroup-with-k0} is trivial, so that $G \actson Y \times X$ is of type III$_1$. When $Y = \R /((a/k_0)\Z$ with $g \cdot y = y + \pi(g)b$, one checks that $H = k_0 \Z$, so that $G \actson Y \times X$ is of type III$_\lambda$ with $\lambda = \exp(-a/k_0)$.
\end{proof}

\begin{remark}\label{ex.all-values-a-b}
Given $0 < b < a$, define the probability measures $\mu_0$ and $\mu_1$ on $\{0,1\}$ given by
$$\mu_0(0) = \frac{1-\exp(-b)}{1-\exp(-a)} \quad\text{and}\quad \mu_1(0) = \frac{1-\exp(b-a)}{1-\exp(-a)} \; .$$
Denote $T = d\mu_1 / d\mu_0$. We get that $T(0) = \exp(b)$ and $T(1) = \exp(b-a)$. So, the map $(x,x') \mapsto \log(T(x)) - \log(T(x'))$ generates the subgroup $a \Z < \R$ and $\log(T(x)) \in b + a \Z$ for all $x \in \{0,1\}$.

Given $a > 0$ and $b=0$, define the probability measures $\mu_0$ and $\mu_1$ on $\{0,1,2\}$ given by
\begin{align*}
\mu_0(0) &= \frac{1}{2} \quad , &\;\; \mu_0(1) &= \frac{1}{2(1 + \exp(a))} \quad , &\;\; \mu_0(2) &= \frac{\exp(a)}{2(1 + \exp(a))} \quad ,\\
\mu_1(0) &= \frac{1}{2} \quad , &\;\; \mu_1(1) &= \frac{\exp(a)}{2(1 + \exp(a))} \quad , &\;\; \mu_1(2) &= \frac{1}{2(1 + \exp(a))} \quad .
\end{align*}
The range of $T = d\mu_1 / d\mu_0$ equals $\{1,\exp(a),\exp(-a)\}$. Therefore, the range of the map $(x,x') \mapsto \log(T(x)) - \log(T(x'))$ generates the subgroup $a \Z < \R$ and $\log(T(x)) \in a \Z$ for all $x \in \{0,1, 2\}$.

So all values $0 \leq b < a$ really occur in Proposition \ref{prop.stable-type-concrete-examples}.

This means that given any $0 < \lambda < 1$, Proposition \ref{prop.stable-type-concrete-examples} provides concrete examples of nonsingular, weakly mixing Bernoulli actions $G \actson (X,\mu)$ of a free product group $G = \Lambda * \Z$ such that the type of $G \actson (Y \times X,\eta \times \mu)$ ranges over III$_\mu$ with $\mu \in \{1\} \cup \{\lambda^{1/k} \mid k \geq 1\}$.

Given any $0 < \lambda < 1$ and an integer $k_1 \geq 1$, Proposition \ref{prop.stable-type-concrete-examples} also provides concrete examples of nonsingular, weakly mixing Bernoulli actions $G \actson (X,\mu)$ such that the type of a diagonal action $G \actson (Y \times X,\eta \times \mu)$ ranges over III$_\mu$ with $\mu \in \{\lambda^{1/k} \mid k \geq 1 \; , \; k|k_1\}$. In particular, we find nonsingular Bernoulli actions of stable type III$_\lambda$.
\end{remark}

In Corollary \ref{cor.explicit-Z-power-dissipative}, we constructed explicit nonsingular Bernoulli actions $\Z \actson (X,\mu)$ of type III$_1$ such that the $m$-th power diagonal action $\Z \actson (X^m,\mu^m)$ is dissipative. However, as we explain now, this phenomenon does not always occur for nonamenable groups.

Let $G$ be a nonamenable group, $G \actson I$ a free action and $F : I \recht (0,1)$ a function satisfying \eqref{eq.conds-F}. Consider the associated nonsingular Bernoulli action $G \actson (X,\mu)$ and the $1$-cocycle $c : G \recht \ell^2(I)$ given by \eqref{eq.one-cocycle-c}. If the $1$-cocycle is not proper, meaning that there exists a $\kappa > 0$ such that $\|c_g\|_2 \leq \kappa$ for infinitely many $g \in G$, it follows from Proposition \ref{prop.conservative} that $G \actson (X,\mu)$ and all its diagonal actions $G \actson (X^m,\mu^m)$ are conservative.

So, if the group $G$ has no proper $1$-cocycles into $\ell^2(G)$, e.g.\ because $G$ does not have the Haagerup property, then all its nonsingular Bernoulli actions are conservative.

On the other hand, the free group $\F_2$ admits proper $1$-cocycles into $\ell^2(\F_2)$. We use this to construct the following peculiar example of a nonsingular Bernoulli action of $\F_2$. In Proposition \ref{prop.free-group-dissipative}, we use a $1$-cocycle with faster growth to give an example of a dissipative Bernoulli action of $\F_2$.

\begin{proposition}\label{prop.concrete-free-group-1}
Let $G = \F_2$ be freely generated by the elements $a$ and $b$. Define the subset $W_a \subset \F_2$ consisting of all reduced words in $a,b$ that end with a strictly positive power of $a$. Similarly define $W_b \subset \F_2$ and put $W = \F_2 \setminus (W_a \cup W_b)$. The Bernoulli action $G \actson (X,\mu)$ with $(X,\mu) = \prod_{g \in G} (\{0,1\},\mu_g)$ and
$$\mu_g(0) = \begin{cases} 3/5 &\;\;\text{if $g \in W_a$,}\\ 2/5 &\;\;\text{if $g \in W_b$,} \\ 1/2 &\;\;\text{if $g \in W$,}\end{cases}$$
is nonsingular, essentially free, ergodic, nonamenable in the sense of Zimmer and of type III$_1$.

For every $g \in G \setminus \{e\}$, the transformation $x \mapsto g \cdot x$ is dissipative. For $m \geq 220$, the $m$-th power diagonal action $G \actson (X^m,\mu^m)$ is dissipative.
\end{proposition}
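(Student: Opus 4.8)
The plan is to reduce everything to the $1$-cocycle $c$ attached to $F$ via \eqref{eq.one-cocycle-c}. Writing $H = \tfrac1{10}(1_{W_a} - 1_{W_b})$, we have $F = \tfrac12 + H$ and $c_g(k) = H(k) - H(g^{-1}k)$, and since $\delta = 2/5 \le F \le 3/5 = 1-\delta$, nonsingularity and essential freeness are immediate from \eqref{eq.easier-nonsingular}. The computational engine is an exact formula for $\|c_g\|_2^2$. Since $H(k)$ depends only on the last letter of the reduced word $k$, left translation by $g^{-1}$ alters $H$ only when it alters that last letter, which happens precisely when $k$ is a prefix of $g$. Setting $h(a)=\tfrac1{10}$, $h(b)=-\tfrac1{10}$, $h(a^{-1})=h(b^{-1})=0$, I would first prove that for a reduced word $g=x_1\cdots x_\ell$,
$$\|c_g\|_2^2 = h(x_1^{-1})^2 + h(x_\ell)^2 + \sum_{t=1}^{\ell-1}\bigl(h(x_t) - h(x_{t+1}^{-1})\bigr)^2 ,$$
so that $\|c_g\|_2^2$ is a sum over consecutive letter-pairs and is therefore controlled by a $4\times4$ transfer matrix $M(q)$ on the alphabet $\{a,a^{-1},b,b^{-1}\}$ with forbidden backtracking and entries $q^{\Delta^2}$, where $\Delta = 10\,(h(x_t)-h(x_{t+1}^{-1})) \in \{0,\pm1,\pm2\}$. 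In particular every generator satisfies $\|c_g\|_2^2 = 1/100$.

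For the dissipativity of each single $g\neq e$, I would bound $\|c_{g^n}\|_2^2$ from below. Writing $g=wcw^{-1}$ with $c$ cyclically reduced and using the cocycle identity together with the isometry of the regular representation, it suffices to treat cyclically reduced $g$, where each cyclic pair value $v$ is repeated $n$ times, so $\|c_{g^n}\|_2^2 \ge n\,v^2$. The only reduced pairs with $h(x_t)=h(x_{t+1}^{-1})$ are $(a^{-1},b)$ and $(b^{-1},a)$; these force the next letter to be a positive generator, which always produces a nonzero pair. Hence no cyclically reduced word of positive length can have all cyclic pairs vanish, giving $\|c_{g^n}\|_2^2 \gtrsim n$; linear growth makes $\sum_n \exp(-\tfrac12\|c_{g^n}\|_2^2)$ converge, and Proposition \ref{prop.conservative}(2) applied to $\langle g\rangle\cong\Z$ yields dissipativity. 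The same observation shows that zero-pairs are never adjacent in a reduced word, whence $\|c_g\|_2^2 \gtrsim |g|$ uniformly.

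For the $m$-fold power, $\F_2\actson X^m$ is the Bernoulli action of $\F_2\actson \F_2\times\{1,\dots,m\}$ with $\|c^{(m)}_g\|_2^2 = m\|c_g\|_2^2$, so by Proposition \ref{prop.conservative}(2) dissipativity amounts to $\sum_{g}\exp(-\tfrac m2\|c_g\|_2^2)<\infty$. By the transfer-matrix formula this series converges exactly when the Perron eigenvalue of $M(q)$ with $q=\exp(-m/200)$ is $<1$ (the bounded boundary weights $h(x_1^{-1})^2+h(x_\ell)^2$ do not affect the exponential rate). I would verify the threshold $m=220$, i.e.\ $q=e^{-1.1}$, by exhibiting an explicit strictly positive vector $v$ with $M(q)v<v$ componentwise; since $M(q)$ is entrywise increasing in $q$, this gives $\rho(M(q))<1$ for all $m\ge220$. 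The same matrix, now at $q=\exp(-\kappa/100)$ with $\kappa$ slightly above $\kappa_0=\delta^{-2}+\delta^{-1}(1-\delta)^{-2}=13.19\ldots$, has Perron eigenvalue at least its minimal row sum, which exceeds $1$; hence $\sum_g\exp(-\kappa\|c_g\|_2^2)=+\infty$ and Proposition \ref{prop.conservative}(1) shows $\F_2\actson(X,\mu)$ is conservative. Nonamenability in the sense of Zimmer then follows from Proposition \ref{prop.criterion-nonamenable} with $\cF=\{a,a^{-1},b,b^{-1}\}$: Lemma \ref{lem.lower-est-cocycle} (applicable as $F\in[1/3,2/3]$) gives $\sum_{g\in\cF}\int_X\sqrt{\om(g,x)}\,d\mu \ge 4\exp(-\tfrac3{500}) > 2\sqrt3 = \|\sum_{g\in\cF}\lambda_g\|$, the last value being Kesten's norm for free generators.

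Finally, ergodicity and type III$_1$ reduce to ergodicity of the (conservative) Maharam extension $\F_2\actson X\times\R$. Here I would pass to the tail equivalence relation $\cR$ with its $\R$-valued cocycle $\alpha$ (as in the proof of Proposition \ref{prop.inessential-type-III1}): flipping coordinate $k$ shifts the $\R$-coordinate by $0$ if $k\in W$ and by $\pm\log(3/2)$ if $k\in W_a\cup W_b$, so $\alpha$ takes values in $\gamma\Z$ with $\gamma=\log(3/2)$, and by \cite[Proposition 1.5]{DL16} the associated $\Z$-skew product over the ergodic hyperfinite relation $\cR$ is ergodic; thus any $\cR(\alpha)$-invariant $Q$ depends only on $s\bmod\gamma\Z$. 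Inserting this into the shift-invariance of $Q$ under $a$ and $b$, whose Radon–Nikodym logarithms take the values $\log(6/5)$ and $\log(4/5)$ and, together with $\gamma$, generate a dense subgroup of $\R$ (an elementary unique-factorization check), forces $Q$ to be constant. The hard part, and the main obstacle, is the intermediate claim that every $\F_2$-invariant $Q$ is already $\cR(\alpha)$-invariant: because $F$ does not converge along orbits, the inessential-set reductions of Section \ref{sec.reduction} are unavailable, and I expect to obtain tail-invariance directly from conservativeness of $\F_2\actson X$ via the Schmidt–Walters theorem \cite[Theorem 2.3]{SW81}, approximating $Q$ by conditional expectations onto finitely many coordinates and using recurrence to eliminate the dependence on the shifted coordinates.
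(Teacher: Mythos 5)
Most of your proposal is correct, and on the quantitative side it is sharper than the paper. Your pair formula for $\|c_g\|_2^2$ is right; it is in fact the corrected form of the paper's \eqref{eq.norm-cg}, whose ``number of sign changes'' should count only changes from positive to negative exponent (e.g.\ $\|c_{a^{-1}b}\|_2^2 = 2/100$, not $4/100$) -- an overcount that is harmless in the paper because only the lower bound $\|c_g\|_2^2 \geq |g|/100$ and upper bounds for specific subgroups are ever used. Your nonamenability argument is the paper's argument verbatim, your single-element dissipativity argument (cyclic reduction, no two adjacent zero pairs) works, and your transfer matrix does what you claim: using the symmetry $a \leftrightarrow b$, $a^{-1} \leftrightarrow b^{-1}$, the Perron eigenvalue is $\rho(M(q)) = 2q + q^2$, so $\sum_g \exp(-\tfrac{m}{2}\|c_g\|_2^2) < \infty$ exactly when $e^{-m/200} < \sqrt{2}-1$, i.e.\ for all $m \geq 177$, which covers $m \geq 220$ (the paper's cruder count of $4\cdot 3^{n-1}$ words of length $n$ needs $m > 200\log 3$). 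Likewise the divergence of $\sum_g \exp(-\kappa \|c_g\|_2^2)$ for $\kappa$ slightly above $\kappa_0 = 475/36 \approx 13.19$ follows from the minimal row sum, so the full action is indeed conservative.

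The gap is exactly where you flag it, and it is fatal to your route to ergodicity and type III$_1$. You need that every $\F_2$-invariant $Q \in L^\infty(X \times \R)$ is $\cR(\al)$-invariant, and your plan -- conservativity of $\F_2 \actson X$ plus \cite[Theorem 2.3]{SW81} plus martingale approximation -- cannot deliver it. Schmidt--Walters requires presenting the action as a diagonal product of a \emph{mixing pmp} action and a conservative action; for the full group $\F_2$ no such presentation exists, because $F$ is not invariant under left translation by $\F_2$ on any nonempty invariant set of coordinates, so no coordinate sub-$\si$-algebra carries a pmp restriction of the $\F_2$-action. Conservativity alone is known not to suffice: whether invariant functions of the Maharam extension of a conservative nonsingular Bernoulli action are tail measurable is precisely the delicate K-property issue studied in \cite{DL16,Ko12}, not a soft consequence of recurrence. (A secondary problem: \cite[Proposition 1.5]{DL16} is applied in this paper only under the hypotheses $F(i) \recht \lambda$ and $\sum_i (F(i)-\lambda)^2 = +\infty$, which fail here, so even your use of it for the $\log(3/2)\Z$-valued tail cocycle would need separate justification.)

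The missing idea, which is the actual content of the paper's proof, is to apply Schmidt--Walters to well-chosen \emph{subgroups} rather than to $\F_2$ itself. For $\Lambda = \langle b, a^{-1}ba\rangle$, the coordinate set $U = \Lambda S_a$ (with $S_a$ the words starting with a positive power of $a$) is $\Lambda$-invariant and $F|_U$ is constant along $\Lambda$-orbits, so $\Lambda \actson (X_U,\mu_U)$ is a mixing pmp Bernoulli action; an explicit norm computation plus Proposition \ref{prop.conservative}(1) shows that $\Lambda \actson (X,\mu)$ is conservative, hence so is $\Lambda \actson X_{U^c} \times \R$, and \cite[Theorem 2.3]{SW81} then gives $Q \in L^\infty(X_{S_a^c} \times \R)$. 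Repeating this with $\langle b, aba^{-1}\rangle$, $\langle a, b^{-1}ab\rangle$, $\langle a, bab^{-1}\rangle$, and finally $\langle aba^{-1}, a^2ba^{-2}\rangle \subset W$ removes all dependence on coordinates, i.e.\ $Q \in 1 \ot L^\infty(\R)$, after which the values $6/5$ and $4/5$ of $\om(a,\cdot)$, generating a dense subgroup of $\R_*^+$, force $Q$ to be constant -- with no tail relation and no input from \cite{DL16} needed. If you add this subgroup decomposition, the rest of your write-up stands.
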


The stable type of the Bernoulli actions $\F_2 \actson (X,\mu)$ in Proposition \ref{prop.concrete-free-group-1} is discussed in Remark \ref{rem.stable-examples-weird}.

\begin{proof}
Denote $F : G \recht (0,1) : F(g) = \mu_g(0)$ and define $c_g(h) = F(h) - F(g^{-1} h)$. We find that
$$c_a = \frac{1}{10} \delta_a \quad\text{and}\quad c_b = - \frac{1}{10} \delta_b \; .$$
Since $c$ is a $1$-cocycle, it follows that $c_g \in \ell^2(G)$ for all $g \in G$. So, the action $G \actson (X,\mu)$ is nonsingular. Using the $1$-cocycle relation, we find that
$$c_{a^n} = \begin{cases} \dis \frac{1}{10} \sum_{k=1}^n \delta_{a^k} &\;\;\text{if $n \geq 1$,}\\ \dis  -\frac{1}{10} \sum_{k=n+1}^0 \delta_{a^k} &\;\;\text{if $n \leq -1$,} \\ \dis 0 &\;\;\text{if $n=0$,}\end{cases}
\qquad\text{and}\qquad
c_{b^n} = \begin{cases} \dis -\frac{1}{10} \sum_{k=1}^n \delta_{b^k} &\;\;\text{if $n \geq 1$,}\\ \dis \frac{1}{10} \sum_{k=n+1}^0 \delta_{b^k} &\;\;\text{if $n \leq -1$,} \\ \dis 0 &\;\;\text{if $n=0$.}\end{cases}
$$

When $g = a^{n_0} b^{m_1} a^{n_1} \cdots a^{n_{k-1}} b^{m_k} a^{n_k}$ is a reduced word, with $k \geq 0$, $n_0,n_k \in \Z$ and $n_i,m_j \in \Z \setminus \{0\}$, the $1$-cocycle relation implies that
\begin{equation}\label{eq.sum-cg}
c_g = c_{a^{n_0}} + a^{n_0} \cdot c_{b^{m_1}} + a^{n_0} b^{m_1} \cdot c_{a^{n_1}} + \cdots + a^{n_0} b^{m_1} a^{n_1} \cdots a^{n_{k-1}} b^{m_k} \cdot c_{a^{n_k}} \; .
\end{equation}
All the terms at the right hand side of \eqref{eq.sum-cg} are orthogonal, except two consecutive terms whose scalar product equals $1/100$ when $n_i \geq 1$ and $m_{i+1} \leq -1$, and also when $m_i \geq 1$ and $n_i \leq -1$. Denote by $|g|$ the word length of $g \in \F_2$. We conclude that
\begin{equation}\label{eq.norm-cg}
\|c_g\|_2^2 = \frac{1}{100} |g| + \frac{1}{50} \text{number of sign changes in the sequence $n_0,m_1,n_1,\ldots,m_k,n_k$} \; .
\end{equation}

Denote by $\om : G \times X \recht (0,\infty)$ the Radon-Nikodym cocycle. Define $\cF = \{a,a^{-1},b,b^{-1}\}$. Denote by $\lambda : G \recht \cU(\ell^2(G))$ the left regular representation.

Combining Lemma \ref{lem.lower-est-cocycle} with \eqref{eq.norm-cg} and Kesten's \cite{Ke58}, we find that
$$\sum_{g \in \cF} \int_X \sqrt{\om(g,x)} \, d\mu(x) \geq 4 \exp\bigl( - \frac{3}{500} \bigr) > 2 \sqrt{3} = \Bigl\| \sum_{g \in \cF} \lambda_g \Bigr\| \; .$$
So by Proposition \ref{prop.criterion-nonamenable}, the action $G \actson (X,\mu)$ is nonamenable in the sense of Zimmer.

When $g_0 \in G \setminus \{e\}$, there exist integers $\al,\be$ with $\al \geq 1$ and $\be \geq 0$ such that $|g_0^n| = \al |n| + \be$ for all $n \in \Z \setminus \{0\}$. It then follows from \eqref{eq.norm-cg} that
$$\sum_{n \in \Z} \exp\bigl(-\frac{1}{2} \, \|c_{g_0^n}\|_2^2 \bigr) \leq \sum_{n \in \Z} \exp\bigl(-\frac{1}{200} \, |g_0^n| \bigr)
\leq 1 + 2 \sum_{n=1}^\infty \exp\bigl(-\frac{\al}{200} n\bigr) < +\infty \; .$$
So by Proposition \ref{prop.conservative}, the transformation $x \mapsto g_0 \cdot x$ is dissipative.

Let $m \geq 220$. The $m$-th power diagonal action $G \actson (X^m,\mu^m)$ is a Bernoulli action whose corresponding $1$-cocycle $(c_{m,g})_{g \in G}$ satisfies $\|c_{m,g}\|_2^2 = m \, \|c_g\|_2^2$.
Define $B_n = \{g \in G \mid |g|=n\}$. For every $n \geq 1$, we have $|B_n| = 4 \cdot 3^{n-1}$. Therefore, using \eqref{eq.norm-cg}, we get that
$$\sum_{g \in G} \exp\bigl(- \frac{1}{2} \, \|c_{m,g}\|_2^2 \bigr) \leq \sum_{g \in G} \exp\bigl(-\frac{m}{200} \, |g|\bigr) =
1 + \sum_{n=1}^\infty \exp\bigl(- \frac{m}{200} \, n\bigr) \cdot 4 \cdot 3^{n-1} < +\infty$$
because $m > 200 \cdot \log 3$. It follows from Proposition \ref{prop.conservative} that the $m$-th power diagonal action $G \actson (X^m,\mu^m)$ is dissipative.

It remains to prove that $G \actson (X,\mu)$ is ergodic and of type III$_1$. Denote by $G \actson (X \times \R,\mu \times \nu)$ the Maharam extension given by \eqref{eq.maharam}. Let $Q \in L^\infty(X \times \R)$ be a $G$-invariant function. The main point is to prove that $Q \in 1 \ot L^\infty(\R)$.

Denote by $S_a \subset G$ the set of reduced words that start with a strictly positive power of $a$. Similarly define $S_{a^{-1}}$, $S_b$ and $S_{b^{-1}}$. Note that
$$\F_2 = \{e\} \sqcup S_a \sqcup S_{a^{-1}} \sqcup S_b \sqcup S_{b^{-1}} \; .$$
Whenever $U \subset G$, we denote $(X_U,\mu_U) = \prod_{g \in U} (\{0,1\},\mu_g)$ and we identify $(X,\mu) = (X_U \times X_{U^c} , \mu_U \times \mu_{U^c})$. Define $\Lambda = \langle b, a^{-1} b a \rangle$ and note that $\Lambda$ is freely generated by $b$ and $a^{-1} b a$. The concatenation $w v$ of a reduced word $w \in \Lambda$ and a reduced word $v \in S_a$ remains reduced. In particular, for all $w \in \Lambda$ and $v \in S_a$, the last letter of $w v$ equals the last letter of $v$. Therefore, the restriction of $F$ to $U := \Lambda S_a$ is $\Lambda$-invariant. It follows that $\Lambda \actson (X_U,\mu_U)$ is a probability measure preserving Bernoulli action.

We claim that the action $\Lambda \actson (X,\mu)$ is conservative. Whenever $k \geq 1$ and $n_i,m_j \geq 1$, the element
$$g = (a^{-1} b a)^{n_1} \, b^{m_1} \cdots (a^{-1} b a)^{n_k} \, b^{m_k} = a^{-1}  b^{n_1} a \; b^{m_1} \; a^{-1} b^{n_2}  a \; b^{m_2} \cdots a^{-1} b^{n_k} a \; b^{m_k}$$
belongs to $\Lambda$ and by \eqref{eq.norm-cg}, we have
$$\|c_g\|_2^2 = \frac{1}{100} \Bigl(2k + \sum_{i=1}^k (n_i + m_i) \Bigr) + \frac{2k-1}{50} < \frac{1}{100} \sum_{i=1}^k (n_i + m_i) + \frac{3k}{50} \; .$$
It follows that
\begin{align*}
\sum_{g \in \Lambda} \exp\bigr(-16 \|c_g\|_2^2\bigr) &\geq \sum_{k=1}^\infty \;\; \sum_{n_1,\ldots,n_k,m_1,\ldots,m_k = 1}^\infty \;\; \exp\bigl(-\frac{24}{25} k\bigr) \; \prod_{i=1}^k \exp\bigl(-\frac{4}{25}(n_i+m_i)\bigr) \\
&= \sum_{k=1}^\infty \exp\bigl(-\frac{24}{25} k\bigr) \; \left( \sum_{n=1}^\infty \exp\bigl(-\frac{4}{25} n\bigr) \right)^{2k} \\
&= \sum_{k=1}^\infty \left( \frac{\exp\bigl(-\frac{32}{25}\bigr)}{\bigl(1-\exp\bigl(-\frac{4}{25}\bigr)\bigr)^2}\right)^k \\
&=+\infty \; .
\end{align*}
From Proposition \ref{prop.conservative}, the claim that $\Lambda \actson (X,\mu)$ is conservative follows.

Since $\Lambda \actson (X_{U^c},\mu_{U^c})$ is a factor action of $\Lambda \actson (X,\mu)$, it is also conservative, as well as its Maharam extension $\Lambda \actson (X_{U^c} \times \R , \mu_{U^c} \times \nu)$. Since the action $\Lambda \actson (X_U,\mu_U)$ preserves the probability measure $\mu_U$, we can view $\Lambda \actson (X \times \R, \mu \times \nu)$ as the diagonal product of the mixing, probability measure preserving $\Lambda \actson (X_U,\mu_U)$ and the conservative $\Lambda \actson (X_{U^c} \times \R , \mu_{U^c} \times \nu)$. By \cite[Theorem 2.3]{SW81}, it follows that $Q \in L^\infty(X_{U^c} \times \R)$. In particular, $Q \in L^\infty(X_{S_a^c} \times \R)$.

We make the same reasoning for $S_{a^{-1}}$ and the group $\langle b, a b a^{-1} \rangle$, for $S_b$ and the group $\langle a , b^{-1} a b \rangle$ and for $S_{b^{-1}}$ and the group $\langle a, b a b^{-1} \rangle$. Since $S_a \cup S_{a^{-1}} \cup S_b \cup S_{b^{-1}} = G \setminus \{e\}$, it follows that $Q \in L^\infty(X_{\{e\}} \times \R)$.

We finally use the group $\Lambda = \langle a b a^{-1}, a^2 b a^{-2} \rangle$. We have $\Lambda \subset W$, so that $\Lambda \actson (X_\Lambda,\mu_\Lambda)$ is a probability measure preserving Bernoulli action. For all $k \geq 1$ and $n_i,m_j \geq 1$, we have that
\begin{align*}
g &= (a b a^{-1})^{n_1} \; (a^2 b a^{-2})^{m_1} \; \cdots \; (a b a^{-1})^{n_k} \; (a^2 b a^{-2})^{m_k}
\\ &= a \; b^{n_1} \; a \; b^{m_1} \; a^{-1} \; b^{n_2} \; a \; b^{m_2} \; a^{-1} \; \cdots  \; a^{-1} \; b^{n_k} \; a \; b^{m_k} \; a^{-2}
\end{align*}
and thus, using \eqref{eq.norm-cg},
$$\|c_g\|_2^2 = \frac{1}{100} \Bigl(2k +2 + \sum_{i=1}^k (n_i + m_i) \Bigr) + \frac{2k-1}{50} \; .$$
The same computation as above shows that $\Lambda \actson (X,\mu)$ is conservative. As above, it follows that $Q \in L^\infty(X_{\Lambda^c} \times \R)$. Altogether, we have proved that $Q \in 1 \ot L^\infty(\R)$.

So we get that $G \actson (X,\mu)$ is ergodic. To prove that the action is of type III$_1$, it suffices to show that the essential range of the map $x \mapsto \om(a,x)$ generates a dense subgroup of $\R_*^+$. But using \eqref{eq.RN-conv-ae}, we get that
$$\om(a,x) = \prod_{g \in G} \frac{\mu_{ag}(x_g)}{\mu_g(x_g)} = \frac{\mu_a(x_e)}{\mu_e(x_e)} = \begin{cases} 6/5 &\;\;\text{if $x_e = 0$,}\\ 4/5 &\;\;\text{if $x_e = 1$.}\end{cases}$$
Since $6/5$ and $4/5$ generate a dense subgroup of $\R_*^+$, the proposition is proved.


\end{proof}

\begin{remark}\label{rem.stable-examples-weird}
The stable type of the nonsingular Bernoulli action $\F_2 \actson (X,\mu)$ constructed in Proposition \ref{prop.concrete-free-group-1} is given as follows. The essential ranges of the maps $(x,x') \mapsto \om(g,x) / \om(g,x')$, $g \in \F_2$, generate the subgroup $(2/3)^\Z$ of $\R_*^+$ and $\om(g,x) \in (4/5) \cdot (2/3)^\Z$ for all $g \in \F_2$ and a.e.\ $x \in X$. Combining the proofs of Proposition \ref{prop.free-group-prescribed-invariants} and \ref{prop.concrete-free-group-1}, it follows that for every ergodic pmp action $\F_2 \actson (Y,\eta)$, the diagonal action $\F_2 \actson Y \times X$ is ergodic and that, varying $\F_2 \actson (Y,\eta)$, the type of this diagonal action ranges over III$_\mu$ with $\mu \in \{1\} \cup \{(2/3)^{1/k} \mid k \geq 1\}$.

Taking a slight variant of the action in Proposition \ref{prop.concrete-free-group-1}, by putting
$$\mu_g(0) = \begin{cases} 3/5 &\;\;\text{if $g \in W_a$,}\\ 5/12 &\;\;\text{if $g \in W_b$,} \\ 1/2 &\;\;\text{if $g \in W$,}\end{cases}$$
all the conclusions of Proposition \ref{prop.concrete-free-group-1} remain valid~--~except that we have to take $m \geq 317$ to get a dissipative diagonal action $\F_2 \actson X^m$~--~and moreover, the Maharam extension of $\F_2 \actson (X,\mu)$ is weakly mixing, so that all diagonal actions $\F_2 \actson Y \times X$ have type III$_1$. This follows because now, the essential ranges of the maps $(x,x') \mapsto \om(g,x) / \om(g,x')$, $g \in \F_2$, generate a dense subgroup of $\R_*^+$, namely the subgroup generated by $2/3$ and $5/7$.
\end{remark}

The Bernoulli action $\F_2 \actson (X,\mu)$ constructed in Proposition \ref{prop.concrete-free-group-1} has the property that the diagonal action $\F_2 \actson (X^m,\mu^m)$ is dissipative for $m$ large enough. This diagonal action is a Bernoulli action associated with $\F_2 \actson I$, where $I$ consists of $m$ disjoint copies of $\F_2$. This operation multiplies $\|c_g\|_2^2$ with a factor $m$, up to the point of satisfying the dissipative criterion in Proposition \ref{prop.conservative}. It is however remarkably more delicate to produce a plain Bernoulli action $\F_2 \actson \prod_{g \in \F_2} (\{0,1\},\mu_g)$ that is dissipative. We do this in the next result, based on Lemma \ref{lem.Z-special-cocycle} below, which provides a $1$-cocycle for $\Z$ with large growth, but bounded ``implementing function''.

\begin{proposition}\label{prop.free-group-dissipative}
Let $G = \F_2$. There exists a function $F : G \recht [1/4,3/4]$ such that the Bernoulli action $G \actson (X,\mu) = \prod_{g \in G} (\{0,1\},\mu_g)$ with $\mu_g(0) = F(g)$ is nonsingular, essentially free and dissipative.
\end{proposition}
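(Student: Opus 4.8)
The plan is to reduce everything to the dissipativity criterion in Proposition~\ref{prop.conservative}(2): once we exhibit a function $F : \F_2 \recht [1/4,3/4]$ whose associated $1$-cocycle $c$ from \eqref{eq.one-cocycle-c} satisfies $\sum_{g \in \F_2} \exp(-\tfrac12 \|c_g\|_2^2) < +\infty$, the resulting Bernoulli action is automatically nonsingular (because $c_g \in \ell^2(\F_2)$ for all $g$), essentially free (because $1/4 \leq F \leq 3/4$ keeps every $\mu_g$ nonatomic and bounded away from the extreme points, so \eqref{eq.conds-F} holds), and dissipative. The entire difficulty is therefore concentrated in making $\|c_g\|_2^2$ grow fast enough: since $\#\{g : |g| = n\} = 4\cdot 3^{n-1}$, we need $\|c_g\|_2^2 \geq (2\log 3 + \eps)|g|$ for the generic (exponentially many) elements, while for the sparse elements concentrated along few directions it suffices that $\|c_g\|_2^2 \recht \infty$ at a rate beating their polynomial count. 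The tension is structural: the exponential volume growth of $\F_2$ pushes towards conservativeness, and the requirement that $F$ be bounded severely limits how much a cocycle coming from a single function can grow along a $\Z$-line.

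This last point is exactly what Lemma~\ref{lem.Z-special-cocycle} is designed to overcome. First I would use it to fix a bounded function $F_0 : \Z \recht [1/4,3/4]$ whose $\Z$-cocycle $b^{(0)}_k(n) = F_0(n) - F_0(n-k)$ satisfies a uniform lower bound $\|b^{(0)}_k\|_2^2 \geq \psi(|k|)$ with $\psi$ growing quickly and, crucially, with $F_0$ oscillating on many scales so that this bound holds for \emph{every} $k \neq 0$ rather than merely for a sparse set of shifts. The cyclic subgroups $\langle a \rangle, \langle b \rangle < \F_2$ and their cosets are genuine copies of $\Z$ sitting inside the Cayley tree, and these ``axis'' directions are precisely where boundedness of $F$ is a genuine constraint; the lemma supplies the building block adapted to them.

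I would then transfer $F_0$ to $\F_2$ along the tree structure, defining $F : \F_2 \recht [1/4,3/4]$ so that its restriction to each cyclic coset is a suitably aligned translate of $F_0$ and so that the assignment is consistent across the branchings of the tree. Estimating $\|c_g\|_2^2$ would proceed through the syllable decomposition of a reduced word exactly as in \eqref{eq.sum-cg}--\eqref{eq.norm-cg}: the left-translation cocycle $c_g$ is a sum of prefix-translates of the per-syllable cocycles, whose supports march off to infinity along the geodesic to $g$ and are therefore pairwise essentially orthogonal. The aim of the computation is a lower bound of the shape $\|c_g\|_2^2 \geq \psi(\text{longest syllable of } g) + (\text{const})\cdot(\text{number of syllables of } g)$, where the first summand (coming from $F_0$) handles the sparse axis-aligned elements and the second (obtained by forcing each syllable boundary to contribute a fixed positive amount) handles the exponentially many generic elements. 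Choosing the per-syllable contribution large enough that the second summand beats $2\log(3)\,|g|$ then gives the required summability.

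The hard part will be controlling the cross-terms and overlaps in $\|c_g\|_2^2$ \emph{uniformly} over all $g$ while keeping every $c_g$ in $\ell^2$. These two demands pull in opposite directions — $\ell^2$-membership wants the translates of $F$ to nearly coincide, whereas a large cocycle norm wants them to differ substantially on many sites — and reconciling them is exactly the purpose of the bounded-implementing-function feature of Lemma~\ref{lem.Z-special-cocycle}. A secondary technical issue is to arrange the tree-consistent placement so that no destructive interference occurs between the contributions of different syllables or different cosets, and to confirm that the nearly-aligned elements (those with $o(|g|)$ syllables) still acquire enough norm from the $F_0$-oscillation along their dominant direction; this is precisely where the growth rate $\psi$ furnished by the lemma must be invoked.
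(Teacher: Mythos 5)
Your proposal has the same skeleton as the paper's proof --- Proposition~\ref{prop.conservative}(2) as the dissipativity criterion, Lemma~\ref{lem.Z-special-cocycle} as the building block, a function $F$ read off from the syllable structure, and the expansion \eqref{eq.sum-cg} with (near-)orthogonality of the per-syllable terms --- but the quantitative target you announce is genuinely insufficient, and this is the heart of the matter. You aim for $\|c_g\|_2^2 \geq \psi(L_g) + c\, s_g$, where $L_g$ is the longest syllable of $g$, $s_g$ is the number of syllables and $c$ is a constant, and you claim that the term $c\,s_g$ beats $2\log(3)\,|g|$ for the exponentially many generic words. It cannot, and no bound of this shape can yield summability, because the entropy per syllable is not bounded by a constant: besides a sign and a letter, each syllable carries a choice of its \emph{length}, contributing roughly $\log(\text{syllable length})$ per syllable. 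Concretely, given any constant $c$ and any increasing $\psi$, fix an integer $\ell$ with $2\ell e^{-c/2} \geq 1$ and consider, for each $s \geq 1$, the words having exactly $s$ syllables, each of length in $(\ell, 2\ell]$: there are at least $2\,(2\ell)^s$ of them, and each has $L_g \leq 2\ell$ and $s_g = s$. The best upper bound your estimate yields for $\sum_g \exp\bigl(-\tfrac12\|c_g\|_2^2\bigr)$ is $\sum_g \exp\bigl(-\tfrac12(\psi(L_g)+c\,s_g)\bigr)$, and already its restriction to these families is at least $2\,e^{-\psi(2\ell)/2}\sum_{s \geq 1}\bigl(2\ell\, e^{-c/2}\bigr)^s = +\infty$. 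So your allocation of roles is off: the growth $\psi$ furnished by the lemma cannot be reserved for the sparse axis-aligned words; it must do work on \emph{every} syllable.

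The repair is exactly what the paper does, and it explains why Lemma~\ref{lem.Z-special-cocycle} is stated with a lower bound that is (super)linear in $|k|$ with an \emph{arbitrarily large} constant while $\|H\|_\infty \leq 1$ (the naive $H = 1_{[1,+\infty)}$ only has slope $1$, far below what is needed). Fix $D > 32 \log 3$ and $H : \Z \recht [0,1]$ vanishing on $\Z_{\leq 0}$ with $\|\gamma_k\|_2^2 \geq D|k|$ for all $k$. Decompose $\F_2 = \{e\} \sqcup E_a \sqcup E_b$ according to the letter of the last syllable, let $\pi_a(g)$ (resp.\ $\pi_b(g)$) be the exponent of that last syllable, and set $F = 1/2 + H(\pi_a(\cdot))/4$ on $E_a$, $F = 1/2 - H(\pi_b(\cdot))/4$ on $E_b$, $F(e)=1/2$. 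Then in the expansion \eqref{eq.sum-cg} each syllable of length $\ell_i$ contributes a full $\|\gamma_{\ell_i}\|_2^2 \geq D\ell_i$, and the scalar products of consecutive terms are products of values of $H$, hence nonnegative --- precisely because of the opposite signs on $E_a$ and $E_b$ together with $H \geq 0$ and $H|_{\Z_{\leq 0}} = 0$; this is the concrete resolution of the ``destructive interference'' issue you flag but leave open. Summing over syllables gives $16\,\|c_g\|_2^2 \geq D\,|g|$, whence $\sum_g \exp\bigl(-\tfrac12\|c_g\|_2^2\bigr) \leq 1 + 4\sum_{n \geq 1} 3^{n-1}\exp(-Dn/32) < +\infty$ since $D/32 > \log 3$, and Proposition~\ref{prop.conservative}(2) applies.
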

\begin{proof}
Denote by $E_a \subset G$ the set of reduced words that end with a nonzero power of $a$. Similarly define $E_b$ and note that $G = \{e\} \sqcup E_a \sqcup E_b$. An element $g \in E_a$ is either a nonzero power of $a$ or can be uniquely written as $g = h a^n$ with $h \in E_b$ and $n \in \Z \setminus \{0\}$. We can therefore define
$$\pi_a : E_a \recht \Z : \pi_a(a^n) = n \;\;\text{when $n \in \Z \setminus \{0\}$, and}\;\; \pi_a(h a^n) = n \;\;\text{when $h \in E_b$ and $n \in \Z \setminus \{0\}$.}$$
We similarly define $\pi_b : E_b \recht \Z$.

Fix $D > 0$ such that $D > 32 \, \log 3$. Using Lemma \ref{lem.Z-special-cocycle}, fix a function $H : \Z \recht [0,1]$ such that $H(n) = 0$ for all $n \leq 0$ and such that the formula $\gamma_k(n) = H(n) - H(n-k)$ defines a $1$-cocycle $\gamma : \Z \recht \ell^2(\Z)$ satisfying $\|\gamma_k\|_2^2 \geq D |k|$ for all $k \in \Z$.

We define
$$F : G \recht [1/4,3/4] : F(g) = \begin{cases} 1/2 + H(\pi_a(g)) / 4 &\;\;\text{if $g \in E_a$,} \\ 1/2 -  H(\pi_b(g)) / 4 &\;\;\text{if $g \in E_b$,} \\ 1/2 &\;\;\text{if $g = e$.}\end{cases}$$
Define $c_g(h) = F(h) - F(g^{-1} h)$. Define the isometries
$$\theta_a : \ell^2(\Z) \recht \ell^2(G) : \theta_a(\delta_n) = \delta_{a^n} \quad\text{and}\quad \theta_b : \ell^2(\Z) \recht \ell^2(G) : \theta_b(n) = \delta_{b^n} \; .$$
We then have $c_a = \theta_a(\gamma_1) / 4$ and $c_b = - \theta_b(\gamma_1) / 4$. So, $c_g \in \ell^2(G)$ for every $g \in G$. It follows that the Bernoulli action $G \actson (X,\mu) = \prod_{g \in G} (\{0,1\},\mu_g)$ with $\mu_g(0) = F(g)$ is nonsingular and essentially free.

We prove that $\sum_{g \in G} \exp(- \|c_g\|_2^2 / 2) < \infty$. It then follows from Proposition \ref{prop.conservative} that $G \actson (X,\mu)$ is dissipative.

When
$$g = a^{n_0} \; b^{m_1} \; a^{n_1} \; \cdots \; b^{m_k} \; a^{n_k}$$
is a reduced word, with $k \geq 0$, $n_0,n_k \in \Z$ and $n_i,m_j \in \Z \setminus \{0\}$, the $1$-cocycle relation implies that
$$4 c_g = \theta_a(\gamma_{n_0}) - a^{n_0} \cdot \theta_b(\gamma_{m_1}) + a^{n_0} b^{m_1} \cdot \theta_a(\gamma_{n_1}) - \cdots + a^{n_0} b^{m_1} a^{n_1} \cdots b^{m_k} \cdot \theta_a(\gamma_{n_k}) \; .$$
All terms in the sum on the right hand side are orthogonal, except possibly consecutive terms, whose scalar products are equal to
$$- \langle \theta_a(\gamma_{n_i}), a^{n_i} \cdot \theta_b(\gamma_{m_{i+1}}) \rangle = - \gamma_{n_i}(n_i) \; \overline{\gamma_{m_{i+1}}(0)} = H(n_i) \, H(-m_{i+1}) \geq 0 \; ,$$
or equal to
$$- \langle \theta_b(\gamma_{m_i}), b^{m_i} \cdot \theta_a(\gamma_{n_i}) \rangle = - \gamma_{m_i}(m_i) \; \overline{\gamma_{n_i}(0)} = H(m_i) \, H(-n_i) \geq 0 \; .$$
We conclude that
$$16 \, \|c_g\|_2^2 \geq \sum_{i=0}^k \|\gamma_{n_i}\|_2^2 + \sum_{j=1}^k \|\gamma_{m_j}\|_2^2 \geq D \, \sum_{i=0}^k |n_i| + D \, \sum_{j=1}^k |m_j| = D \, |g| \; ,$$
where $|g|$ denotes the word length of $g \in \F_2$. So we have proved that $\|c_g\|_2^2 \geq (D / 16) \, |g|$ for all $g \in G$.

Since for $n \geq 1$, there are precisely $4 \cdot 3^{n-1}$ elements in $\F_2$ with word length equal to $n$, it follows that
$$\sum_{g \in G} \exp(- \|c_g\|_2^2 / 2) \leq \sum_{g \in G} \exp( - D \, |g| / 32 ) = 1 + 4 \, \sum_{n=1}^\infty \exp( - D \, n / 32 ) \; 3^{n-1} < +\infty \; ,$$
because $D/32 > \log 3$. So the proposition is proved.
\end{proof}

The function $H = 1_{[1,+\infty)}$ implements a $1$-cocycle $c : \Z \recht \ell^2(\Z)$ satisfying $\|c_k\|_2^2 = |k|$ for all $k \in \Z$. Multiplying $H$ by a constant $D > 0$, we obviously obtain a $1$-cocycle $c$ with growth $\|c_k\|_2^2 = D^2 \, |k|$. It is however more delicate to attain this growth while keeping $\|H\|_\infty \leq 1$. In particular, the easy construction of Lemma \ref{lem.translate-function} does not give such large growth. We need a more intricate construction with an oscillating function $H$, giving examples where $\|c_k\|_2^2 \geq D \, |k|^{3/2}$, while $H : \Z \recht [0,1]$.

\begin{lemma}\label{lem.Z-special-cocycle}
Let $D > 0$. There exists a function $H : \Z \recht [0,1]$ such that $H(n) = 0$ for all $n \leq 0$ and such that the formula $c_k(n) = H(n) - H(n-k)$ defines a $1$-cocycle $c : \Z \recht \ell^2(\Z)$ satisfying $\|c_k\|_2^2 \geq D |k|^{3/2}$ for all $k \in \Z$.
\end{lemma}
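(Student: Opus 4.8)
The plan is to build $H$ by concatenating, on the positive half-line, a sequence of oscillating ``blocks'' whose oscillation frequency decreases and whose length grows, tuned so that the single block resonating with a given shift $k$ already contributes of the order $|k|^{3/2}$. Concretely, I would work with dyadic scales: for each $m \geq 1$ set the period $P_m = 2^m$ and let the block $B_m$ be an interval of $L_m = N_m P_m$ consecutive positive integers, where $N_m$ is an integer with $N_m \approx C P_m^{1/2}$ and $C = C(D)$ is a large constant fixed at the very end, so that $L_m \approx C P_m^{3/2}$. On $B_m$ I let $H$ run through $N_m$ full copies of the triangular wave of period $P_m$ and amplitude $1$ (rising linearly from $0$ to $1$ over $P_m/2$ steps, then falling back to $0$), and I set $H(n)=0$ for $n \leq 0$. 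Since each block both starts and ends at the value $0$, the resulting $H \colon \Z \recht [0,1]$ has no jumps at the block boundaries, which is the crucial point for integrability.

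For the $\ell^2$-property I would \emph{not} estimate $\|c_k\|_2$ directly. Since $c$ is a $1$-cocycle, $c_k$ is the finite sum $\sum_{j=0}^{k-1}$ of the translates $n \mapsto c_1(n-j)$ of $c_1$ (by telescoping), and $\|c_{-k}\|_2 = \|c_k\|_2$ after reindexing, so it suffices to check that $c_1 \in \ell^2(\Z)$. On the interior of each $B_m$ one has $|c_1(n)| = |H(n)-H(n-1)| = 2/P_m$, and because $H$ is continuous across the block boundaries there are no unit-size contributions there; hence $\|c_1\|_2^2 \leq \sum_m L_m (2/P_m)^2 \approx 4C \sum_m P_m^{-1/2} = 4C\sum_m 2^{-m/2} < \infty$. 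This yields $c_k \in \ell^2(\Z)$ for all $k$ at once.

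The heart of the argument is the lower bound, and here I would discard every block except the one in resonance with $k$. For fixed $k \geq 1$, the interval $[2k,4k)$ has ratio $2$ and therefore contains a power of two, so I may choose $m$ with $P_m \in [2k,4k)$, i.e. $k/P_m \in (1/4,1/2]$. Writing $T_P(k) = \sum_{n=0}^{P-1}(H(n)-H(n-k))^2$ for the per-period contribution of the period-$P$ triangular wave, the key estimate is $T_P(k) \geq c\,P$ for all $k/P \in [1/4,1/2]$, with $c>0$ an absolute constant; this reduces to the fact that the $1$-periodic triangular wave $h$ has $\int_0^1 (h(x)-h(x-t))^2\,dx > 0$ for every $t \in (0,1)$ (vanishing only for integer $t$) and is continuous in $t$, hence bounded below on the compact range $[1/4,1/2]$ (e.g. the value at $t=1/2$ is $1/3$), the discrete sum differing from the integral by a negligible $O(1)$. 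Restricting $\|c_k\|_2^2$ to those $n$ with both $n$ and $n-k$ in $B_m$, and using that $k \leq P_m/2 \ll L_m$ leaves at least $\tfrac12 N_m$ complete periods, I obtain $\|c_k\|_2^2 \geq \tfrac12 N_m\, c\,P_m \geq \tfrac{cC}{2}P_m^{3/2} \geq \tfrac{cC}{2}(2k)^{3/2} \geq D\,k^{3/2}$ once $C$ is chosen large enough in terms of $D$ and $c$.

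The main obstacle I anticipate is exactly the per-period estimate $T_P(k) \geq cP$ on the resonant window, together with the surrounding bookkeeping: one must verify that passing from the continuous triangular wave to its sampling on $\Z$, and from a single period to the $N_m$ periods inside $B_m$ minus a boundary strip of width $k$, costs only controllable constants, and in particular that no arithmetic cancellation occurs for the finitely many small scales $P_m$ (for instance $P_m=2$ when $k=1$), which I would simply check by hand. Everything else—the integrability via the cocycle relation, the dyadic covering of shifts, and the final choice of $C$—is routine once this estimate is secured.
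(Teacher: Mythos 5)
Your proposal is correct, but it reaches the lemma by a genuinely different mechanism than the paper. The paper's $H$ is a concatenation of \emph{single} triangular tents of height $1$ and rapidly growing widths $a_n = \lceil \delta n^2 \rceil$ (one oscillation per block), and its lower bound is a \emph{tail sum} rather than a resonance argument: for a given $k$, every tent with $a_n \geq 2k$ contributes, because on its rising slope the difference $H(m)-H(m-k)$ is exactly the constant $k/a_n$ over roughly $a_n$ integers, so that $\|c_k\|_2^2 \geq \tfrac{k^2}{2}\sum_{n \,:\, a_n \geq 2k} a_n^{-1}$, which is $\gtrsim k^{3/2}/\sqrt{\delta}$ by an elementary comparison with $\int (1+\delta x^2)^{-1}\,dx$; the small parameter $\delta$ plays the role of your large constant $C$. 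You instead repeat a dyadic-period triangular wave $N_m \approx C P_m^{1/2}$ times per block and extract the entire lower bound from the \emph{single} block whose period resonates with $k$, i.e.\ $P_m \in [2k,4k)$. This is sound, and the per-period estimate you isolate as the heart of the argument does hold as you sketch it: the continuous overlap functional $t \mapsto \int_0^1 (h(x)-h(x-t))^2\,dx$ is continuous and strictly positive on $(0,1)$ (value $1/3$ at $t=1/2$), hence bounded below on $[1/4,1/2]$; the integrand is Lipschitz, so the per-period Riemann-sum error is $O(1)$ and is absorbed for large $P_m$; and for the finitely many small scales one has $T_P(k)>0$ simply because $0<k<P$ prevents the sampled wave from being $k$-periodic, so a uniform constant exists. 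Note also that the boundary strip of width $k \leq P_m/2$ destroys at most one of the $N_m$ periods, so your factor $\tfrac12 N_m$ is safe once $C \geq 2$. The trade-off between the two routes is clear: the paper's tents are so wide relative to $k$ that no oscillation analysis is needed at all~--~every relevant pointwise difference is a constant and the whole proof is an exact computation~--~whereas your construction concentrates the estimate in one block at the price of the compactness-plus-discretization bookkeeping; both treat integrability the same way (it suffices that $c_1 \in \ell^2(\Z)$, by the cocycle relation), and both parameter choices can be tuned to give any prescribed constant $D$.
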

\begin{proof}
For every integer $n \geq 1$, define the function
$$H_n : \Z \recht [0,1] : H_n(k) = \begin{cases} k/n &\;\;\text{if $0 \leq k \leq n$,}\\ (2n-k)/n &\;\;\text{if $n \leq k \leq 2n$,} \\ 0 &\;\;\text{elsewhere.}\end{cases}$$
Let $(a_n)_{n \geq 0}$ be an increasing sequence of integers with $a_n \geq 1$ for all $n$ and $\sum_{n=0}^\infty a_n^{-1} < +\infty$. A concrete sequence $a_n$ will be chosen below. Put $b_0 = 0$ and $b_n = \sum_{k=0}^{n-1} 2a_k$ for all $n \geq 1$. Define the function
$$H : \Z \recht [0,1] : H(k) = \begin{cases} H_{a_n}(k-b_n) &\;\;\text{if $n \geq 0$ and $b_n \leq k \leq b_n + 2a_n$,} \\ 0 &\;\;\text{elsewhere.}\end{cases}$$
Note that we can view $H$ as a ``concatenation'' of translates of $H_{a_n}$, in such a way that their supports become disjoint. By construction, $H(k) = 0$ for all $k \leq 0$.

Define $c_k(n) = H(n) - H(n-k)$. We have
\begin{align*}
\|c_1\|_2^2 &= \sum_{m=1}^\infty |H(m) - H(m-1)|^2 = \sum_{n=0}^\infty \;\; \sum_{m=b_n+1}^{b_n + 2a_n} \;\; |H(m) - H(m-1)|^2 \\
& = \sum_{n=0}^\infty \;\; \sum_{m=b_n+1}^{b_n + 2a_n} \;\; \frac{1}{a_n^2} = 2 \sum_{n=0}^\infty \frac{1}{a_n} < +\infty \; .
\end{align*}
So, $c_1 \in \ell^2(\Z)$. Since $c$ satisfies the $1$-cocycle relation, we have that $c_k \in \ell^2(\Z)$ for all $k \in \Z$.

For every $k \geq 1$, define $\cF_k = \{n \in \Z \mid n \geq 0 \;\;\text{and}\;\; a_n \geq k\}$. For $k \geq 1$, we then have
\begin{align*}
\|c_k\|_2^2 & \geq \sum_{n \in \cF_k} \;\; \sum_{m = b_n+k}^{b_n+a_n} \;\; |c_k(m)|^2 = \sum_{n \in \cF_k} \;\; \sum_{m = b_n+k}^{b_n+a_n} \frac{k^2}{a_n^2} \\
& = \sum_{n \in \cF_k} \;\; \frac{k^2 (a_n - k + 1)}{a_n^2} \geq \frac{k^2}{2} \; \sum_{n \in \cF_{2k}} \;\; \frac{1}{a_n} \;\; ,
\end{align*}
where the last inequality follows because $\cF_{2k} \subset \cF_{k}$ and $a_n -k + 1 \geq a_n/2$ when $n \in \cF_{2k}$.

Let $D > 0$. Take $0 < \delta \leq 1$ such that $12 \sqrt{\delta} \leq D^{-1}$. Put $a_0 = 1$ and $a_n = \lceil \delta n^2 \rceil$ for all $n \geq 1$. We prove that $\|c_k\|_2^2 \geq D |k|^{3/2}$ for all $k \in \Z$. Since $\|c_{-k}\|_2 = \|c_k\|_2$, it suffices to prove this inequality for every $k \geq 1$.

Fix $k \geq 1$ and put $n_0 = \Bigl\lceil \sqrt{2k/\delta} \Bigr\rceil$. Note that $n_0 \geq 1$ and $\sqrt{\delta} n_0 \geq \sqrt{2k} \geq 1$. When $n \geq n_0$, we have $a_n \geq 2k$ and thus, $n \in \cF_{2k}$. Therefore,
\begin{align*}
\|c_k\|_2^2 & \geq \frac{k^2}{2} \; \sum_{n=n_0}^\infty \; \frac{1}{a_n} \geq \frac{k^2}{2} \; \sum_{n=n_0}^\infty \; \frac{1}{1 + \delta n^2} \\
& \geq \frac{k^2}{2} \; \int_{n_0}^\infty \; \frac{1}{1 + \delta x^2} \; dx = \frac{k^2}{2 \sqrt{\delta}} \; \bigl( \frac{\pi}{2} - \arctan(\sqrt{\delta} n_0)\bigr) \; .
\end{align*}
Since $\sqrt{\delta} n_0 \geq 1$ and $\frac{\pi}{2} - \arctan(x) \geq 1/(2x)$ for all $x \geq 1$, we get that
$$\|c_k\|_2^2 \geq \frac{k^2}{4 \delta n_0} \geq \frac{k^2}{4 \delta (\sqrt{2k/\delta} + 1)} = \frac{k^{3/2}}{4 \sqrt{\delta}} \, \frac{1}{\sqrt{2} + \sqrt{\delta/k}} \geq  \frac{k^{3/2}}{12 \sqrt{\delta}} \geq D \; k^{3/2} \; , $$
because $\sqrt{\delta / k} \leq 1$ and $12 \sqrt{\delta} \leq D^{-1}$.
\end{proof}

\section{\boldmath Amenable weakly mixing actions of stable type III$_\lambda$}

In this section, we give a positive answer to \cite[Question 4.6]{BN11} and prove the following result. The proof is independent of the rest of this article, but the result fits well with the above discussions on the stable type of nonsingular Bernoulli actions.

\begin{proposition}\label{prop.BN-question}
Let $G$ be an arbitrary countable infinite group and let $\lambda \in (0,1]$. Then $G$ admits an essentially free, amenable, weakly mixing action of stable type III$_\lambda$.
\end{proposition}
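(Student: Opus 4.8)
The plan is to reformulate the three requirements through the Maharam extension and then produce a single action meeting all of them. Recall from the excerpt that, for an essentially free action, amenability in the sense of Zimmer is equivalent to injectivity of $L^\infty(X)\rtimes G$, hence to hyperfiniteness of the orbit equivalence relation $\cR$ by Connes--Feldman--Weiss. Recall also that $G \actson (X,\mu)$ is weakly mixing and of stable type $\mathrm{III}_1$ exactly when its Maharam extension $G \actson X\times\R$ is weakly mixing. For $\lambda\in(0,1)$ the analogous reformulation is to arrange the Radon--Nikodym cocycle $\om$ to take values in $\lambda^\Z$, so that $\rho:=\log_\lambda\om$ is a $\Z$-valued cocycle, and then to demand that $\rho$ be \emph{stably ergodic}: for every ergodic pmp action $G\actson(Y,\eta)$ the skew product $G\actson Y\times X\times\Z$ by $(g,(y,x,n))\mapsto(g\cdot y,g\cdot x,n+\rho(g,x))$ is ergodic. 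Thus it suffices to build one essentially free, weakly mixing, \emph{amenable} nonsingular action whose Radon--Nikodym cocycle lies in $\lambda^\Z$ and whose associated integer cocycle $\rho$ is stably ergodic (for $\lambda=1$, an amenable action with weakly mixing Maharam extension).

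For the construction I would realize $G$ inside an ambient amenable relation of the correct type. Let $\cR$ be the (unique up to orbit equivalence) ergodic hyperfinite equivalence relation of type $\mathrm{III}_\lambda$ on a standard probability space $(X,\mu)$, for $\lambda\in(0,1)$ given concretely as the tail equivalence relation of the product measure $\prod_n\nu_\lambda$ on $\{0,1\}^\N$ with $\nu_\lambda(0)=(1+\lambda)^{-1}$ and $\nu_\lambda(1)=\lambda(1+\lambda)^{-1}$, so that all Radon--Nikodym derivatives of $\cR$ are automatically powers of $\lambda$. The key step is to choose a free homomorphism $\al:G\recht[\cR]$ into the full group whose orbits generate $\cR$. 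Since the orbit relation of such an action is $\cR$ itself, the action is automatically essentially free, ergodic, of type $\mathrm{III}_\lambda$, and~--~because $\cR$ is hyperfinite~--~amenable in the sense of Zimmer; moreover its Radon--Nikodym cocycle is the restriction of that of $\cR$ and hence is $\lambda^\Z$-valued. For nonamenable $G$ this is consistent precisely because $\mu$ is only quasi-invariant: there is no $G$-invariant state, so injectivity of $L^\infty(X)\rtimes G\cong W^*(\cR)$ does not force $G$ to be amenable.

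The main obstacle is that neither weak mixing nor stable type $\mathrm{III}_\lambda$ is an invariant of $\cR$ alone; both depend on the chosen homomorphism $\al$. I would therefore not take an arbitrary generating realization but engineer $\al$ so that the integer cocycle $\rho$ is stably ergodic, i.e. its skew product stays ergodic after tensoring with any ergodic pmp system. The natural way to secure this is to build $\al$ out of sufficiently mixing full-group elements~--~concretely, to model $\cR$ on a boundary-type (stationary) action, for which the strong double ergodicity with coefficients of boundaries should yield ergodicity of the relevant skew products against all pmp coefficients simultaneously. For $\lambda=1$ the same scheme applies with $\R$ in place of $\Z$: one arranges a real-valued Radon--Nikodym cocycle whose Maharam extension is ergodic relative to every pmp action, i.e. weakly mixing. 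The delicate point to check is exactly the compatibility of (i) hyperfiniteness of the orbits, forced by amenability, with (ii) the stable ergodicity of $\rho$, which is a mixing-type demand on the same action; verifying that a single realization supplies both simultaneously, for every countable infinite $G$ and every $\lambda$, is where the real work lies. Essential freeness can then be arranged by a standard perturbation of $\al$ inside $[\cR]$, and weak mixing follows from the stable ergodicity established for $\rho$ together with ergodicity of $\cR$.
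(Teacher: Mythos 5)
Your reformulation of the three requirements through the Maharam extension and the $\Z$-valued cocycle $\rho=\log_\lambda \om$ is correct, and the idea of realizing $G$ inside the full group $[\cR]$ of the hyperfinite type III$_\lambda$ relation is a sensible starting point. But the proposal has a genuine gap at exactly the place you yourself flag as ``where the real work lies'': the construction of a homomorphism $\al : G \recht [\cR]$ that is simultaneously essentially free, generating, and has stably ergodic Radon--Nikodym cocycle is never carried out, and the tension between hyperfiniteness of the orbits and the mixing-type demand on $\rho$ is precisely the content of \cite[Question 4.6]{BN11} --- so the proposal restates the difficulty rather than resolving it. The gesture toward boundary actions does not close this gap: stationary (boundary) measures do not have $\lambda^\Z$-valued Radon--Nikodym cocycles, so they are not of type III$_\lambda$ for $\lambda<1$; double ergodicity with coefficients concerns diagonal actions $G \actson B \times B$, not skew products $G \actson Y \times X \times \Z$ over \emph{arbitrary} ergodic pmp systems $Y$; and boundary actions are generally not essentially free. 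Even your first step needs an argument: by the Connes--Krieger uniqueness of the hyperfinite ergodic type III$_\lambda$ relation, the existence of a free generating embedding $G \recht [\cR]$ is \emph{equivalent} to the existence of an amenable, essentially free $G$-action of exact type III$_\lambda$, which is already a weak form of the statement to be proved, and the proposed ``standard perturbation'' inside $[\cR]$ to restore essential freeness while preserving the group relations and the cocycle properties is not standard for groups with relations.

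For comparison, the paper sidesteps both issues by never working inside an abstract hyperfinite relation. It first exhibits one completely explicit example: the amenable wreath product $G_1 = \Z/3\Z \wr \Z$ acting on $(\Z/3\Z,\mu_0)^\Z$ with $\mu_0$ non-uniform, where amenability of $G_1$ gives Zimmer amenability for free, and the weakly mixing \emph{pmp} Bernoulli action of the subgroup $\Z$ forces weak mixing and stable type III$_\lambda$ (with $\lambda$ prescribed by the ratios $\mu_0(i)/\mu_0(j)$). It then transfers this example to an arbitrary infinite group $G$ by an induction along a measure equivalence coupling whose defining ``weakly mixing ME subgroup'' property is exactly what preserves stable type: Ornstein--Weiss orbit equivalence \cite{OW80} handles amenable $G$, and for nonamenable $G$ the coupling comes from the Bowen--Nevo cocycle $\om : \Z \times X \recht G$ of \cite{BN13}, whose required dissipativity is verified via transience of the $\mu_0$-random walk on the nonamenable group $G$. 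If you want to salvage your approach, you would in effect have to reprove this transfer mechanism or an equivalent one; the full-group formulation by itself does not supply it.
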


We first prove the proposition for infinite amenable groups, in particular for $\Z$, and then use an induction procedure to arbitrary infinite groups along a weakly mixing cocycle introduced in \cite{BN13}.

\begin{proof}
First consider the specific group $G_1 = \Z / 3 \Z \wr \Z = \bigl(\oplus_{n \in \Z} \Z / 3\Z \bigr) \rtimes \Z$ and let $\mu_0$ be a non uniform probability measure on $\Z / 3 \Z$ with $\mu_0(i) > 0$ for every $i \in \Z / 3 \Z$. Define $(X,\mu) = (\Z / 3 \Z,\mu_0)^\Z$ and consider the nonsingular action $G_1 \actson (X,\mu)$ where each $\Z / 3 \Z$ acts by translation on $\Z / 3 \Z$ and where $\Z$ acts by Bernoulli shift. If the ratios $\mu_0(0)/\mu_0(1)$ and $\mu_0(1)/\mu_0(2)$ generate a dense subgroup of $\R_*^+$, put $\lambda = 1$ and otherwise define $\lambda \in (0,1)$ so that this subgroup is given by $\lambda^\Z$. Since the action of $\Z$ on $(X,\mu)$ is pmp and weakly mixing and since $G_1$ is an amenable group, the action $G_1 \actson (X,\mu)$ is essentially free, amenable, weakly mixing action and of stable type III$_\lambda$.

Combining \cite{Fu99,BN13}, we say that $G_1$ is a weakly mixing measure equivalence (ME) subgroup of $G$ if there exists a $\sigma$-finite measure space $(\Omega,\nu)$ and an essentially free measure preserving action $G_1 \times G \actson (\Omega,\nu)$ with the following properties.
\begin{itemlist}
\item Both the restriction of the action to $G_1$ and the restriction of the action to $G$ are dissipative.
\item The restriction of the action to $G$ admits a fundamental domain of finite measure.
\item For every ergodic pmp action $G \actson (Y,\eta)$, the induced action $G_1 \actson (\Omega \times Y)/G$ is ergodic. Here, $G$ acts diagonally on $\Omega \times Y$ and since this action is dissipative, the quotient is well defined.
\end{itemlist}
By \cite{OW80}, all essentially free ergodic pmp actions of infinite amenable groups are orbit equivalent. Applying this to a pmp Bernoulli action of $G_1$, it follows that any infinite amenable group $G_1$ is a weakly mixing ME subgroup of any other infinite amenable group $G$. At the end of the proof, we use \cite{BN13} to prove that the group $\Z$ is a weakly mixing ME subgroup of any countable nonamenable group $G$. So to conclude the proof of the proposition, it suffices to prove the following statement: if $G_1$ is a weakly mixing ME subgroup of $G$ and if $G_1$ admits an essentially free, amenable, weakly mixing action of stable type III$_\lambda$, then also $G$ admits such an action.

Take $(\Omega,\nu)$ as above and let $G_1 \actson (Z,\eta)$ be an essentially free, amenable, weakly mixing action of stable type III$_\lambda$. Consider the essentially free nonsingular action $G \actson (Z \times \Omega)/G_1$, where $G_1$ acts diagonally on $Z \times \Omega$. Fix an ergodic pmp action $G \actson (Y,\rho)$. We have to prove that the diagonal action $G \actson (Z \times \Omega)/G_1 \times Y$ is amenable, ergodic and of type III$_\lambda$. The corresponding orbit equivalence relation is isomorphic to the restriction of the orbit equivalence relation of $G_1 \times G \actson Z \times \Omega \times Y$ to a non negligible subset. So, the diagonal action $G \actson (Z \times \Omega)/G_1 \times Y$ is stably orbit equivalent to the diagonal action $G_1 \actson Z \times (\Omega \times Y)/G$. We therefore have to prove that the latter is amenable, ergodic and of type III$_\lambda$. The amenability follows because $G_1 \actson Z$ is amenable. Since $G_1 \actson(\Omega \times Y)/G$ is ergodic and pmp and since $G_1 \actson Z$ is weakly mixing and of stable type III$_\lambda$, we get that $G_1 \actson Z \times (\Omega \times Y)/G$ is ergodic and of type III$_\lambda$.

It remains to prove that $\Z$ is a weakly mixing ME subgroup of any countable nonamenable group $G$. Fix a symmetric probability measure $\mu_0$ on $G$ whose support generates the group $G$. Put $(X,\mu) = (G,\mu_0)^\Z$ and let $\Z \actson (X,\mu)$ be the Bernoulli shift, given by $(n \cdot x)_k = x_{k - n}$. Denote by $\om : \Z \times X \recht G$ the $1$-cocycle introduced in \cite[Theorem 6.1]{BN13} and uniquely determined by $\om(1,x) = x_0$. Denote by $\lambda$ the counting measure on $G$ and define $(\Omega,\nu) = (X \times G,\mu \times \lambda)$. Define the action $\Z \times G \actson \Omega$ given by
$$(n,g) \cdot (x,h) = (n\cdot x, \om(n,x) h g^{-1}) \quad\text{for all}\;\; (n,g) \in \Z \times G \; , \; (x,h) \in X \times G \; .$$
This action is essentially free and measure preserving. Also, the restriction of the action to $G$ has $X \times \{e\}$ as a finite measure fundamental domain. In \cite[Theorem 6.1]{BN13}, it is proven that $\Z \actson (\Omega \times Y)/G$ is ergodic for every pmp ergodic action $G \actson (Y,\eta)$. So we only have to prove that the action $\Z \actson \Omega$ is dissipative.

Fix $g_0 \in G$ and define
$$V_{g_0} = \{(x,g) \in \Omega \mid \forall k \geq 0 : x_{-k} \cdots x_{-1} x_0 g \neq g_0 \} \; .$$
Defining $\pi : \Omega \recht G : \pi(x,g) = g$, we have that
$$V_{g_0} = \{(x,g) \in \Omega \mid \forall k \geq 1 : \pi(k \cdot (x,g)) \neq g_0 \} \; .$$
So, $1 \cdot V_{g_0} \subset V_{g_0}$.

For every fixed $g \in G$, the measure
\begin{equation}\label{eq.mymeasure}
\mu\bigl(\{x \in X \mid \; \text{there are infinitely many $k \geq 0$ with $x_{-k} \cdots x_{-1} x_0 g = g_0$}\;\}\bigr)
\end{equation}
equals the probability that the invariant random walk on $G$ with transition probabilities given by $\mu_0$ and starting at $g$ visits infinitely often the element $g_0$. Since the group $G$ is nonamenable and the support of $\mu_0$ generates $G$, this random walk is transient and the measure in \eqref{eq.mymeasure} is zero for every $g \in G$. This means that $\bigcup_{k \in \Z} k \cdot V_{g_0}$ has a complement of measure zero for every $g_0 \in G$. Since $1 \cdot V_{g_0} \subset V_{g_0}$, it follows that the action of $\Z$ on $\Omega_{g_0} = \Omega \setminus \bigcap_{k \in \Z} k \cdot V_{g_0}$ is dissipative with fundamental domain $V_{g_0} \setminus 1 \cdot V_{g_0}$. Since $X \times \{g_0\} \subset \Omega_{g_0}$ and $g_0 \in G$ is arbitrary, it follows that $\Z \actson \Omega$ is dissipative.
\end{proof}

\begin{remark}
For a countable nonamenable group $G$, the proof of Proposition \ref{prop.BN-question} provides an explicit essentially free, amenable, weakly mixing action of stable type III$_1$. Indeed, it suffices to combine the explicit action $\Z \actson (Z,\eta)$ of stable type III$_1$ given by Corollary \ref{cor.explicit-Z} with the explicit $1$-cocycle $\om : \Z \times X \recht G$ of \cite[Theorem 6.1]{BN13}.

Note that the resulting amenable, weakly mixing and stable type III$_1$ action of $G$ on $\Xi = (Z \times X \times G)/\Z$ has the property that the diagonal action $G \actson \Xi \times \Xi$ is dissipative, contrary to the action of $G$ on its Poisson boundary, which is doubly ergodic. To prove that $G \actson \Xi \times \Xi$ is dissipative, we write $\Lambda = \Z \times \Z$ and note that it is sufficient to prove that the action of $\Lambda$ on $(Z \times Z \times X \times X \times G \times G)/G$ is dissipative. So, it suffices to prove that $\Lambda \actson (X \times X \times G \times G)/G$ is dissipative. This means that we have to prove that the action
$$\Lambda \actson X \times X \times G \quad\text{given by}\quad (k,l) \cdot (x,y,g) = (k\cdot x,l \cdot y, \om(k,x) g \om(l,y)^{-1})$$
for all $(k,l) \in \Z^2$ and $(x,y,g) \in X \times X \times G$, is dissipative.

For all $(k,l) \in \Lambda$, denote
$$V_{k,l} = \{(x,y,e) \in X \times X \times G \mid \om(k,x) \neq \om(l,y) \} \; .$$
For every $n \geq 1$, denote $\Lambda_n = n \Z \times n \Z$ and write
$$V_n = \bigcap_{(k,l) \in \Lambda_n \setminus \{(0,0)\}} V_{k,l} \; .$$
For $k,l \geq 1$, we have
\begin{align*}
V_{k,l} &= \{(x,y,e) \mid x_{-k+1} \cdots x_{-1} x_0 \neq y_{-l+1} \cdots y_{-1} y_0 \} \\ &= \{(x,y,e) \mid x_{-k+1} \cdots x_0 y_0^{-1} \cdots y_{-l+1}^{-1} \neq e\} \; ,
\end{align*}
so that $(\mu \times \mu \times \lambda)(V_{k,l}) = 1-\mu^{*(k+l)}(e)$. When $k \geq 1$ and $l \leq -1$, we similarly find
$$V_{k,l} = \{(x,y,e) \mid x_{-k+1} \cdots x_0 y_{-l}^{-1} \cdots y_1^{-1} \neq e\}$$
and conclude that $(\mu \times \mu \times \lambda)(V_{k,l}) = 1-\mu^{*(|k|+|l|)}(e)$ for all $k,l \in \Z$. Since $G$ is nonamenable, we can fix $0 < \rho < 1$ so that $\mu^{*m}(e) \leq \rho^m$ for all $m \geq 1$. It follows that $(\mu \times \mu \times \lambda)(V_n) \recht 1$, so that $\bigcup_{n \geq 1} V_n$ equals $X \times X \times \{e\}$, up to measure zero.

By construction, $(k,l) \cdot V_n \cap (X \times X \times \{e\}) = \emptyset$ for all $(k,l) \in \Lambda_n \setminus \{(0,0)\}$. In particular, $(k,l) \cdot V_n \cap V_n = \emptyset$, so that the action $\Lambda_n \actson \Lambda_n \cdot V_n$ is dissipative. Since $\Lambda_n < \Lambda$ has finite index, also the action $\Lambda \actson \Lambda \cdot V_n$ is dissipative. Since the union of all $V_n$ equals $X \times X \times \{e\}$ up to measure zero, we conclude that the action $\Lambda \actson \Lambda \cdot (X \times X \times \{e\}) = X \times X \times G$ is dissipative.
\end{remark}

\begin{remark}
Generalizing the action of the wreath product group $\Z / 3 \Z \wr \Z$ that we used in the beginning of the proof Proposition \ref{prop.BN-question}, we can also provide a negative answer to \cite[Problem H]{Mo06}. Let $\Gamma$ and $\Lambda$ be countable groups with $\Gamma$ nonamenable and $\Lambda$ infinite amenable. Define $G = \Gamma \wr \Lambda = \Gamma^{(\Lambda)} \rtimes \Lambda$. Choose a nonsingular amenable action $\Gamma \actson (X_0,\mu_0)$. Define $(X,\mu) = (X_0,\mu_0)^\Lambda$ and consider the action $G \actson (X,\mu)$, where $\Lambda$ acts by Bernoulli shift and where each copy of $\Gamma$ acts on the corresponding copy of $(X_0,\mu_0)$ in the infinite product. We get that $G \actson (X,\mu)$ is amenable and that all its power actions $G \actson X^n = X \times \cdots \times X$ are ergodic, because the restriction of $G \actson (X,\mu)$ to the subgroup $\Lambda$ is a pmp Bernoulli action. So $G$ is a nonamenable group with infinite amenability degree, in the sense of \cite[Definition 3.2]{Mo06}. Therefore, $G$ provides a negative answer to \cite[Problem H]{Mo06}. It similarly follows that the bounded cohomology of $G$ with coefficients in an arbitrary semi-separable coefficient $G$-module $V$ (in the sense of \cite[Definition 3.11]{Mo07}) vanishes in all degrees: $H^0_{\text{\rm b}}(G,V) = V^G$ and $H^n_{\text{\rm b}}(G,V) = \{0\}$ for all $n \geq 1$.
\end{remark}

\end{document}